\definecolor{darkgreen}{RGB}{0,100,0}
\def\ov{\overline}
\let\del=\partial
\newcommand{\eps}{\varepsilon}
\def\D{\mathbb D} 
\def\Cc{{\mathbb C}}
\newcommand{\R}{\mathbb R}
\newcommand{\N}{\mathbb N}
\newcommand\inv{^{-1}}
\newcommand\otau{{\overline\tau}}
\let\ol=\overline
\DeclareMathOperator\sgn{sgn}
\DeclareMathOperator\Fix{Fix}
\DeclareMathOperator\Res{Res}
\DeclareMathOperator\dom{dom}
\def\Hh{{\mathbb H}}
\theoremstyle{plain}
\newtheorem{lemma}{Lemma}[section]
\newtheorem{proposition}[lemma]{Proposition}
\newtheorem{theorem}[lemma]{Theorem}
\newtheorem{corollary}[lemma]{Corollary}
\newtheorem{remark}[lemma]{Remark}
\newtheorem{remarks}[lemma]{Remarks}
\newtheorem{notation}[lemma]{Notation}
\newtheorem{definition}[lemma]{Definition}
\renewcommand\paragraph{\@startsection{paragraph}{4}\z@{5pt}{-\fontdimen 2\font }\bfseries}
\title[Real polynomial vector fields]{Generic complex polynomial vector fields with real coefficients}
\author[J. Godin \& C. Rousseau]{Jonathan Godin and Christiane Rousseau}
\address{Universit\'e de Moncton, Campus de Shippagan, 218 boul. J.-D.-Gauthier, Shippagan (NB), E8S 1P6, Canada --- D\'epartement de math\'ematiques et de statistique, Universit\'e de Montr\'eal, C.P. 6128, Succursale Centre-ville, Montr\'eal (QC), H3C 3J7, Canada.}
\email{ jonathan.godin@umoncton.ca --- christiane.rousseau@umontreal.ca}
\begin{document} 
\date{\today}
\maketitle

\begin{abstract} The paper studies the complex 1-dimensional polynomial vector fields with real coefficients under topological orbital equivalence preserving the separatrices of the pole at infinity. The number of  generic strata is determined, and a complete parametrization of these strata is given in terms of a modulus formed by a combinatorial and an analytic part. The bifurcation diagram is described for the degree 4. A realization theorem is proved for any generic modulus. \end{abstract}

% On peut l'enlever dans la version finale si on veut
%\pdfbookmark[section]{\contentsname}{toc}
%\tableofcontents

\section{Introduction} 

The study of the dynamics of 1-dimensional holomorphic maps involves a mixture of local and global techniques. Among the local techniques is the study of the multiple fixed points (also called \emph{parabolic points}) of germs of holomorphic diffeomorphisms and their unfoldings. For instance, the rightmost real point of the Mandelbrot set corresponds to a quadratic map with a parabolic point. 
For a parabolic  fixed point of multiplicity $k+1$, i.e. codimension $k$, it is natural to embed the corresponding germ of diffeomorphism into a generic family depending on $k$ parameters. Indeed, the parabolic points organize the dynamics in their neighborhoods inside the generic families. 

Providing a complete description of the dynamics of unfoldings of parabolic points proved to be a very difficult problem. A simple formal normal form was known consisting of the time-one map of a rational vector field. But the change of coordinate to normal form was divergent. For parameter values for which it is possible to bring the system to the normal form in the neighbordood of the unfolded simple fixed points, the obstruction to convergence was explained by the fact that normalizations did not match globally. But nobody knew how to treat the case where the unfolded simple fixed points were not linearizable. The idea came from Douady, namely to normalize on domains with two sectors adherent to two different fixed points. It  was used by Lavaurs and Oudkerk to treat some sectoral regions where the unfolded simple fixed points were not linearizable (\cite{D94}, \cite{L89} and \cite{O99}). The visionary paper of Douady-Estrada-Sentenac \cite{DES05} made the breakthrough and opened the way to treat all parameter values by the same method (for a non exhaustive list see \cite{MRR04}, \cite{Ri08},  \cite{Ro15}). 
The shape of the domains of normalization is  controlled by the dynamics of complex 1-dimensional polynomial vector fields. The paper \cite{DES05} contains a complete study of the \lq\lq generic\rq\rq\ complex polynomial vector fields, meant here to be the structurally stable ones. It was followed by other works studying the non-generic cases (see for instance \cite{BD10}, \cite{DT16} and \cite{D20}). 

Holomorphic dynamics develops in parallel to antiholomorphic dynamics, and in the latter case parabolic points do also play an important role. The second iterate of an antiholomorphic parabolic germ is a holomorphic parabolic germ of the same multiplicity (see for instance \cite{HS14}, \cite{ IM16} and \cite{GR22}). However, the natural parameters unfolding an antiholomorphic parabolic germ of multiplicity $k+1$ are real. An analytic  classification of generic unfoldings of antiholomorphic parabolic germs of multiplicity $k+1$ depending real-analytically on $k$ parameters can be obtained by extending these germs antiholomorphically in the parameters (the idea of extending antiholomorphically was already mentioned by Milnor in \cite{M92}). If $f_\eps$ is such a germ, then the map $g_\eps= f_{\ov{\eps}}\circ f_\eps$ is a generic unfolding of a holomorphic parabolic germ and its modulus is a modulus for $f_\eps$ (\cite{GR23} and \cite{Ro23b}). But the construction of the modulus appearing there does not respect the real character of the parameters. A description respecting the real character of the parameters is still needed and will  involve an underlying vector field with real coefficients. This is the motivation for this study. 

The results can also be helpful for any problem where parabolic points naturally depend on real parameters. This is the case of the Poincar\'e return map in the Hopf bifurcation. For the codimension 1 case, it  was shown in \cite{A12} that the orbital classification of families of vector fields unfolding a Hopf bifurcation follows from the conjugacy of the Poincar\'e return maps. 

The paper describes the complete dynamics of generic polynomial vector fields $P(z) \frac{\partial }{\partial z}$, where $P$ is a polynomial of degree $k+1$ with real coefficients. 
In practice, this means providing the phase portrait of the ODE $\dot z = \frac{dz}{dt} = P(z)$  for real time $t$. Modulo an affine change of coordinate we can always suppose that the polynomial $P$ has the form
\begin{equation}P(z) = \begin{cases}
z^{k+1} + \eps_{k-1}z^{k-1} + \dots + \eps_1z+\eps_0, &k\ \text{odd},\\
\pm z^{k+1} + \eps_{k-1}z^{k-1} + \dots + \eps_1z+\eps_0, &k\ \text {even},\end{cases}\label{eq:P} \end{equation} 
and we also  note it $P_\eps$ with $\eps= (\eps_{k-1}, \dots, \eps_1, \eps_0)\in \R^k$, to emphasize its coefficients. 
We only consider the $+$ case. The other case can be obtained from it by changing $t\mapsto -t$. 

\begin{remark} When complex coefficients are allowed and $k$ is 
 odd, the two cases $\pm$ are equivalent under a change $z\mapsto
\exp\left(\frac{\pi i \ell}{k}\right)z$, $\ell$ odd.
\end{remark}

\begin{notation} 
We call $\mathcal{P}_{\Cc,k+1}$ (resp. $\mathcal{P}_{\R, k+1}$) the family of polynomial vector fields $X_\eps=P_\eps(z) \frac{\partial}{\partial z}$ with complex (resp. real) coefficients, where 
$$P_\eps(z) = z^{k+1} + \eps_{k-1} z^{k-1} + \dots + \eps_1z+\eps_0$$ 
and $\eps\in \Cc^k$ (resp.  $\eps\in \R^k$).
\end{notation}

\begin{remark}\label{strata_parameter} In the whole paper, we identify $\mathcal{P}_{\Cc,k+1}$ (resp. $\mathcal{P}_{\R, k+1}$) with the topological space $\Cc^k$ (resp. $\R^k$).\end{remark}

\emergencystretch=1em
In this paper, we determine the equivalence classes of generic complex 1-dimensional polynomial vector fields with real coefficients under topological orbital equivalence preserving the separatrices of the pole at infinity: these equivalence classes are called generic \emph{strata}. In Section~\ref{sec:preliminaries} we recall the known results for $\mathcal{P}_{\Cc, k+1}$, and in Section~\ref{sec:P_R}, we derive the corresponding results for $\mathcal{P}_{\R, k+1}$, namely each stratum is characterized by a combinatorial invariant and parameterized by an  analytic invariant. In Section~\ref{sec:generic_strata}, we determine  the exact number of  generic strata in $\mathcal{P}_{\R,k+1}$. In Section~\ref{sec:structure}, we explore the structure of the family $\mathcal{P}_{\R,k+1}$. In particular we show that it contains a complete unfolding of any non real parabolic point. In Section~\ref{sec:k=3}, we determine the bifurcation diagram of polynomial vector fields in $\mathcal{P}_{\R,k+1}$ of degree 4. Finally, in Section~\ref{sec:real}, a realization theorem is proved for any generic combinatorial and analytic invariant. 
\par 
\emergencystretch=0pt

\section{Preliminaries on polynomial vector fields in $\mathcal{P}_{\Cc, k+1}$}\label{sec:preliminaries} 

In this section we briefly recall the results of \cite{DES05} on polynomial vector fields~$\mathcal{P}_{\Cc, k+1}$. 

\begin{remark}\label{rem:scaling} The change \begin{equation}(z, \eps_{k-1}, \dots, \eps_1,\eps_0,t)\mapsto (zr, \eps_{k-1} r^{-(k-2)}, \dots, \eps_1,\eps_0r,tr^k)\label{scaling}\end{equation} sends a vector field to one with the same phase portrait modulo a zoom.  This allows using scalings when discussing particular situations. 
\end{remark}

\subsection{The dynamics of polynomial vector fields $\mathcal{P}_{\Cc, k+1}$}

The dynamics is governed by the singular point at $\infty$, which is a pole of order $k-1$ (see Figure~\ref{infinity}(a)). The pole has the behavior of a saddle with $2k$ sepatrices.  This means that near the boundary of any sufficiently large disk containing all the singular points, the trajectories are organized like a flower (see Figure~\ref{infinity}(b)).
Note that $\infty$ is reached in finite time. Moreover,  the complexified time is ramified at the pole and covering $k$ times a neighborhood of $\infty$. \begin{figure}
\begin{center}\subfigure[]{\includegraphics[width=5cm]{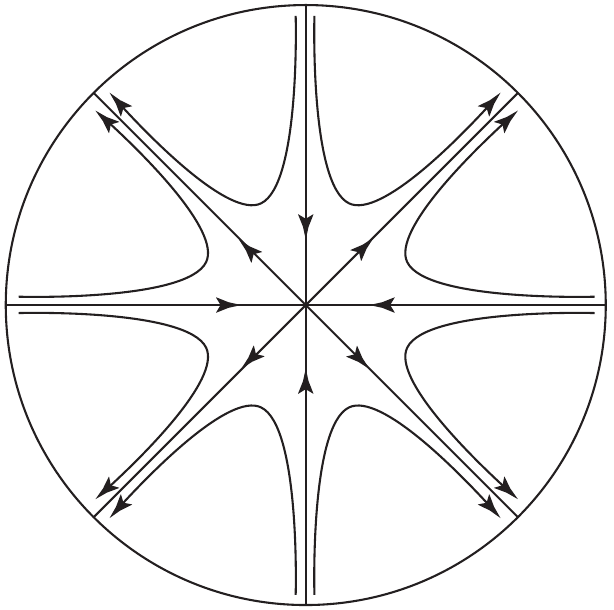}}\qquad\subfigure[]{\includegraphics[width=5cm]{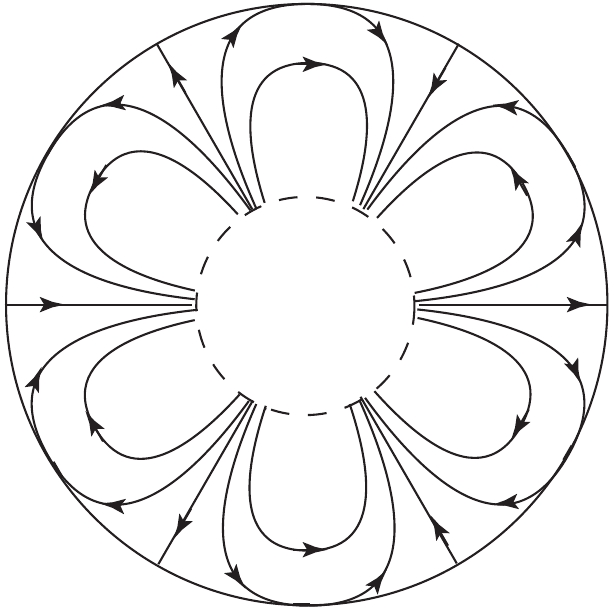}}\caption{(a)
The phase portrait at infinity. (b) The phase portrait near the
boundary of a large disk containing all singular points.}
 \label{infinity}\end{center}\end{figure}

\begin{proposition} \cite{DES05} Let  $X_\eps  \in\mathcal{P}_{\Cc, k+1}$.
\begin{enumerate} 
  \item Any separatrix of $\infty$ either lands at a singular
    point or merges with another separatrix to form a homoclinic loop.  
  \item Any simple finite singular point is an antisaddle, either a radial node, or a
    strong focus, or a center. A singular point is a center if and only if its
    eigenvalue is pure imaginary.  
  \item There are no limit cycles.  
  \item Any trajectory is either: 
  \begin{itemize} 
    \item a fixed point, 
    \item a separatrix of $\infty$ landing at a singular point, 
    \item a homoclinic loop through infinity,
    \item a periodic orbit of a period annulus, 
    \item a trajectory whose $\alpha$- and $\omega$- limits are singular
      points. In the case where the $\alpha$- and $\omega$- limits are the same
      singular point, then this point is a parabolic (multiple) point.
  \end{itemize} 
  \item Any center is surrounded by a
period annulus, whose boundary is a union of homoclinic loops through infinity.
  \item There is at least one separatrix landing at each radial node or focus.
  \item If $z_1, \dots, z_\ell$ are the (possibly multiple) singular points of a vector field
    $X_\eps  = P_\eps \frac\del{\del z}$, then $\sum_{j=1}^{\ell}
  \Res\bigl( \frac1{P_\eps}, z_j\bigr) =0$.   \item Any homoclinic loop surrounds a group of singular points $z_{j_1}, \dots, z_{j_s}$
    such that $\sum_{\ell=1}^{s} \Res\bigl(\frac1{P_\eps},z_{j_\ell}\bigr) \in i\R^*$. \end{enumerate} \end{proposition}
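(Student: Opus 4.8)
The plan is to exploit the \emph{rectifying (time) coordinate}. On $\CP^1$ minus the zeros of $P_\eps$, the multivalued primitive $w=\int\frac{dz}{P_\eps(z)}$ conjugates $X_\eps$ to the translation field $\frac{\del}{\del w}$, so in every flow-box the real trajectories are the horizontal lines $\operatorname{Im}w=\mathrm{const}$, parametrized by real time; the associated $1$-form $\omega=\frac{dz}{P_\eps}$ is meromorphic on $\CP^1$, with a simple pole of residue $1/P_\eps'(z_j)$ at each simple zero $z_j$ of $P_\eps$ and, since $\deg P_\eps=k+1\ge 2$, a zero of order $k-1$ at $\infty$. First I would record the local models: a simple zero gives a linearizable singularity (a one-dimensional holomorphic field has no resonances, so $X_\eps$ is conjugate to $\dot z=P_\eps'(z_j)(z-z_j)$); a zero of multiplicity $m+1$ makes $X_\eps$ conjugate to $\dot z=z^{m+1}(1+\cdots)$, with alternating hyperbolic and elliptic sectors; and near $\infty$ one has $w\sim -\frac{1}{kz^{k}}$, so $\infty$ is reached in finite time, time is a $k$-fold branched cover there, and the order-$(k-1)$ zero of $\omega$ produces exactly $2k$ local separatrices with $2k$ hyperbolic sectors between them.

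With this, (2) and (7) are immediate. For (2), the eigenvalue $\lambda=P_\eps'(z_j)$ at a simple zero is nonzero and, from $\dot r=(\operatorname{Re}\lambda)r$, $\dot\theta=\operatorname{Im}\lambda$ in the linear model, the point is a radial node when $\lambda\in\R$, a center when $\lambda\in i\R$, and a strong focus otherwise, so centers are exactly the singular points with purely imaginary eigenvalue. For (7), the residue theorem applied to $\omega$ on $\CP^1$ gives $\sum_j\Res\bigl(\tfrac1{P_\eps},z_j\bigr)=-\Res(\omega,\infty)=0$, since $\omega$ is holomorphic at $\infty$.

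Next I would treat the \lq\lq loop\rq\rq\ statements through the flat structure. If $\gamma$ is a closed real trajectory, its monodromy, obtained by developing $w$ along it, is the translation by $T=\oint_\gamma\omega=\oint_\gamma dt\in\R_{>0}$; being horizontal, this translation preserves the trajectory through each point of a transversal to $\gamma$, so every nearby trajectory closes up. Hence there are no limit cycles — statement (3) — and every closed orbit lies in a maximal open period annulus. If $\Gamma$ is a homoclinic loop through $\infty$ (a closed curve of $\CP^1$, as $\omega$ is holomorphic there), then $\oint_\Gamma\omega$ equals both the finite nonzero real time needed to traverse $\Gamma$ and, by residues, $2\pi i$ times the sum of the residues of $1/P_\eps$ at the singular points it encircles, so that sum lies in $i\R^*$ — statement (8). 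Since the only finite singular points are antisaddles — a sink or source admits no passing trajectory and a center no separatrix — no polycycle can pass through a finite singular point; hence every polycycle is a bouquet of homoclinic loops through $\infty$, and applying this to the outer boundary of the maximal period annulus around a center yields (5).

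It remains to handle (1), (4) and (6). I would reparametrize $X_\eps$ by a positive function vanishing to the right order at $\infty$ so as to get a complete flow on $S^2$, making $\infty$ a multi-saddle with $2k$ hyperbolic sectors (the index sum $(k+1)+(1-k)=2=\chi(S^2)$ being a consistency check), and apply Poincar\'e--Bendixson: every $\alpha$- and $\omega$-limit set is a singular point, a periodic orbit, or a polycycle. A periodic orbit lies in an open period annulus and, as there are no limit cycles, is the limit of no distinct orbit; a trajectory homoclinic to a single singular point forces that point to be multiple, since a simple point admits no self-connecting trajectory. The remaining case to exclude is a trajectory — in particular a separatrix of $\infty$ — spiralling onto a polycycle $\Gamma$ through $\infty$: here the first-return map near $\Gamma$, in the rectifying coordinate, is to leading order the translation $w\mapsto w+\tau$ with $\tau\in\R^*$ the finite traverse time (again $2\pi i$ times a sum of residues that is purely imaginary), hence parabolic, with no contraction transverse to $\Gamma$, so no spiralling occurs and the orbit must land at a singular point or return to $\infty$. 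Assembling the cases gives the list (4), and in particular a separatrix of $\infty$ either lands at a singular point or returns to $\infty$ and merges with the matching one into a homoclinic loop — statement (1); and (6) follows by chasing the open basin of a node or focus, whose frontier is a union of polycycles through $\infty$, with the obstruction (8) ruling out the basin being a disk bounded by a single homoclinic loop that carries no separatrix landing at the point, which forces some separatrix of $\infty$ to accumulate on, hence land at, it. The main obstacle throughout is precisely the analysis of the passage near $\infty$, where time is finite and $k$-fold ramified, needed to make these return maps genuinely controlled; this is the technical core of \cite{DES05}.
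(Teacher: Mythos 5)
The paper offers no proof of this proposition: it is recalled verbatim from \cite{DES05}, so there is no in-paper argument to compare yours with. Your route --- the translation structure induced by the rectifying form $\omega=dz/P_\eps$, the residue theorem for (7) and (8), real holonomy along closed orbits to kill limit cycles, and Poincar\'e--Bendixson on $S^2$ for the trajectory classification --- is exactly the Douady--Estrada--Sentenac strategy, and items (2), (3), (7), (8) and the bulk of (1) and (4) are handled correctly. One local ingredient is wrong, however: at a finite zero of $P_\eps$ of multiplicity $m+1$ the index is $m+1$, so Bendixson's formula $e-h=2(\mathrm{ind}-1)=2m$ together with $e+h+p=2m$ forces $h=p=0$; all $2m$ sectors are \emph{elliptic} (in the chart $w=-1/(mz^m)$ every nearby non-separatrix orbit is homoclinic to the point), and hyperbolic sectors occur only at the pole at $\infty$. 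Your claim that no polycycle through a finite singular point can be a limit set or bound a period annulus --- which you justify only for simple antisaddles --- is still true for multiple points, but for the opposite reason to the one you give: an orbit shadowing an attracting separatrix of a finite parabolic point must land there rather than pass by. As written, your arguments for (5) and for excluding spiralling in (4) silently assume all finite singular points are simple.

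The genuine gap is in (6). Granting the classification (4), every orbit in the basin $L$ of a node or focus $z_0$ is either a separatrix of $\infty$ landing at $z_0$ or a heteroclinic orbit from another singular point to $z_0$; assuming no separatrix lands at $z_0$, one must reach a contradiction, and ``some separatrix of $\infty$ accumulates on, hence lands at, $z_0$'' does not follow: a separatrix contained in $\ov L$ may land at a different singular point of $\partial L$, or close up into a homoclinic loop contained in $\partial L$. The missing step is to show (e.g.\ by connectedness of the circle of orbits of $L\setminus\{z_0\}$) that otherwise all orbits of $L$ would share the same $\alpha$-limit point, and then to derive a contradiction from periods/residues; this, together with the control of the $k$-fold ramified time at $\infty$ that you rightly identify as the technical core, is one of the delicate points of \cite{DES05} and cannot be waved through.
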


\subsection{DES-equivalence and structurally stable vector fields}

\begin{definition} \begin{enumerate} \item Two vector fields $X_\eps$ and $X_\eps'$ are said to be \emph{DES-equivalent} if there exists a homeomorphism of $\Cc$ preserving the separatrices at infinity, which is a topological orbital equivalence between the vector fields, i.e. it sends the trajectories to trajectories and preserves the orientation, but not necessarily the parametrization. 
\item Each equivalence class (or the corresponding set of parameter values, see Remark~\ref{strata_parameter}) is called a  \emph{stratum}.
\item The strata of full dimension are open sets of the parameter space and called \emph{generic strata}, and the vector fields of a generic stratum are called  \emph{generic} (in the sense of Douady-Estrada-Sentenac).  Each vector field $X_\eps$ belonging to a generic stratum is \emph{structurally stable}, i.e. DES-equivalent to all vector fields in a neighborhood of $X_\eps$. \end{enumerate} \end{definition}

\begin{remark} Each stratum is a submanifold of the parameter space. Its boundary is a finite union of strata of lower dimension.\end{remark} 

The bifurcation diagram for this equivalence relation divides the parameter space into open regions (generic strata) of structurally stable vector fields separated by bifurcation \lq\lq surfaces\rq\rq, which are unions of strata of lower dimension.  
The only bifurcations of real codimension 1 are the homoclinic loops through infinity. The higher real codimension bifurcations are the parabolic points (the real codimension is $2s$ for a multiple point of multiplicity $s+1$) and intersections of lower codimension bifurcations.

\begin{proposition} A vector field $X_\eps\in\mathcal{P}_{\Cc, k+1}$ is generic
 if and only if it has only simple singular points and no homoclinic loop.
\end{proposition}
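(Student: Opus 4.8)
The plan is to prove both implications by combining the structural results already collected in the first Proposition with a perturbation analysis near the bifurcation loci.

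\medskip

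\noindent\textbf{Sufficiency (simple singular points and no homoclinic loop $\Rightarrow$ generic).} Suppose $X_\eps$ has only simple zeros of $P_\eps$ and no homoclinic loop through infinity. First I would note that having only simple singular points is an open condition on $\eps$ (the discriminant of $P_\eps$ is nonzero), so after a small perturbation the $k+1$ singular points persist and move continuously, and each stays of the same local type (antisaddle: node, focus, or center) unless an eigenvalue crosses the imaginary axis. To rule out the latter I would argue that a center requires a residue relation of the form $\operatorname{Res}(1/P_\eps, z_j)\in i\R$; generically this equality fails, but more to the point, if $X_\eps$ has no center then small perturbations have no center either, and if $X_\eps$ has a center I would show it actually has a homoclinic loop on the boundary of its period annulus (item (5) of the Proposition), contradicting our hypothesis. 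Hence under the stated hypotheses every finite singular point is a hyperbolic antisaddle (node or focus with nonzero real part of eigenvalue). The remaining point is the combinatorial structure of the separatrices of infinity: by item (1) each of the $2k$ separatrices either lands at a singular point or forms a homoclinic loop; since there are no homoclinic loops, all $2k$ separatrices land at singular points. I would then invoke the fact (standard in this theory, and implicit in \cite{DES05}) that the landing pattern of the separatrices, together with the local phase portraits at the hyperbolic singular points, determines the global phase portrait up to DES-equivalence, and that this landing pattern is stable under small perturbations precisely because no separatrix is on the verge of forming a loop. Therefore $X_\eps$ is DES-equivalent to all nearby vector fields, i.e. structurally stable, hence generic.

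\medskip

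\noindent\textbf{Necessity (generic $\Rightarrow$ simple singular points and no homoclinic loop).} I would prove the contrapositive: if $X_\eps$ has a multiple singular point or a homoclinic loop, then it is not structurally stable. If $P_\eps$ has a multiple root, then arbitrarily small perturbations of the coefficients split it into several distinct singular points (or change their count as seen from a fixed large disk), changing the number of finite singular points and hence the phase portrait; no homeomorphism preserving separatrices can match a configuration with, say, a parabolic point to one with two hyperbolic antisaddles, so $X_\eps$ lies on a bifurcation locus of positive codimension. If instead all singular points are simple but there is a homoclinic loop $\gamma$, then $\gamma$ corresponds to a group of singular points whose residues sum to a purely imaginary number (item (9)); a generic small perturbation of $\eps$ destroys this purely-imaginary condition, so the loop breaks — one side of the separatrix now lands at a singular point while the other sweeps past — yielding a topologically distinct separatrix configuration. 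In either case $X_\eps$ is not DES-equivalent to all its neighbors.

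\medskip

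\noindent\textbf{Main obstacle.} The delicate point is the claim, used in the sufficiency direction, that the phase portrait of a vector field with only hyperbolic antisaddles and all separatrices landing is \emph{rigid}: that the landing combinatorics plus local data pin down the DES-class, and that this combinatorics cannot jump under arbitrarily small perturbations. Making this precise requires a compactness/continuity argument on the separatrix trajectories — showing that a separatrix landing transversally at a hyperbolic node or focus continues to land at the continued singular point, and that the cyclic order in which separatrices enter a given singular point is locally constant — together with an explicit construction of the topological equivalence by gluing sectorial homeomorphisms. I would lean on the corresponding statements in \cite{DES05} rather than reprove them, and the bulk of the remaining work is to check that the genericity conditions there are exactly \lq\lq simple singular points and no homoclinic loop,\rq\rq\ which follows by inspecting the only codimension-one bifurcation (homoclinic loops) and the higher-codimension ones (multiple points).
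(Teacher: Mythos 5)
The paper does not actually prove this proposition: it is stated in the preliminaries as a recalled result of \cite{DES05}, so there is no in-paper argument to compare against. Your sketch follows the standard DES route and is correct in outline — openness of the simplicity condition, exclusion of centers via item (5), landing of all separatrices via item (1), and, for necessity, the splitting of a multiple point and the breaking of the purely-imaginary residue condition. Two places are thinner than the one obstacle you flag. First, in the sufficiency direction, stability of \lq\lq no homoclinic loop\rq\rq\ does not reduce to the eigenvalues of individual singular points: by item (8) a loop may surround a \emph{group} of singular points whose residues sum to a point of $i\R^*$, so you must show that the full homoclinic bifurcation locus (a union of such conditions over all admissible groups) is closed, equivalently that none of the transversal times $\eta_j$ of Theorem~\ref{thm:temps transvers} is real — an open condition since $\eta$ depends continuously (in fact holomorphically) on $\eps$. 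Your phrase \lq\lq no separatrix is on the verge of forming a loop\rq\rq\ is exactly this claim, but it needs either the $\eta$-coordinates or a compactness argument on separatrices to be made precise. Second, in the necessity direction, \lq\lq a generic small perturbation destroys the purely-imaginary condition\rq\rq\ presumes that the relevant residue sum is a non-constant function of $\eps$; this is true (it is a nonconstant holomorphic function on the complex family, as the scaling of Remark~\ref{rem:scaling} already shows) but deserves a line. The main gap you identify yourself — that the landing combinatorics plus local data pin down the DES-class — is precisely the content of Proposition~\ref{prop:tree-graph} and Theorem~\ref{thm:temps transvers}, and deferring it to \cite{DES05} is consistent with how the paper itself treats the statement.
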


\subsection{The combinatorial part of the invariant of a structurally stable vector field}
\begin{proposition}\label{prop:tree-graph}\cite{DES05} 
Let $X_\eps \in\mathcal{P}_{\Cc, k+1}$ be a generic vector field. Two trajectories having the same $\alpha$- and $\omega$-limits are called \emph{equivalent}: this defines an equivalence relation on trajectories whose $\alpha$- and $\omega$-limits are singular points. 
Let us define the following (non oriented) graph:
\begin{itemize}
\item the vertices are the singular points;
\item there is an edge between two vertices if and only if there is a trajectory linking the corresponding singular points. Then the edge is given by the equivalence classes of trajectories having these two vertices as $\alpha$- and $\omega$-limits.
\end{itemize}
Then this graph is a planar tree graph. On this graph, the only isomorphisms we allow are the ones provided by orientation preserving homeomorphisms of the plane. The graph is attached to the $2k$ separatrices in the following way: if we turn around the graph along a closed curve, then we meet $2k$ singular points. The separatrices are attached in a one-to-one manner to these $2k$ points in a non-crossing way.  
(See Figure~\ref{fig:tree graph}.)
This attachement is uniquely determined by the attachment of a first separatrix to a singular point. The tree graph of a vector field and its attachment to the separatrices is called the \emph{combinatorial part} of the invariant of $X_\eps$. All generic vector fields of a given stratum have the same combinatorial part.  \end{proposition}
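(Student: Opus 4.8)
The plan is to read the graph off a cell decomposition of the Riemann sphere and then to obtain the tree property from a global residue identity. I would first record the phase portrait forced by genericity. By the characterization of generic vector fields, a generic $X_\eps$ has only simple singular points and no homoclinic loops; combined with the classification proposition this gives exactly $k+1$ simple finite singular points, each a radial node or a strong focus (no centers, since a center is surrounded by a period annulus whose boundary is a union of homoclinic loops, by item~(5)), and each of the $2k$ separatrices of $\infty$ lands at a finite singular point (by item~(1), there being no homoclinic loops). Thus every finite singular point is a source or a sink for the real flow, and by item~(4) a trajectory that is neither a singular point nor a separatrix of $\infty$ has its $\alpha$- and $\omega$-limits at two \emph{distinct} finite singular points --- equal limits would force that point to be multiple. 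Hence the equivalence relation and the graph are well defined.

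Next I would introduce $\Sigma\subset\CP^1$, the union of $\infty$, the finite singular points, and the $2k$ separatrices, viewed as an embedded graph with $V=k+2$ vertices and $E=2k$ edges, each edge being a separatrix joining $\infty$ to a finite singular point. Since every edge meets $\infty$, $\Sigma$ is connected, so each complementary region is an open disk and Euler's formula gives exactly $F=2-V+E=k$ of them. A cell $D$ contains no singular point and meets no separatrix, so each trajectory through a point of $D$ stays in $D$ and has its $\alpha$- and $\omega$-limits on $\partial D$; a standard parallelizability argument (the flow on a simply connected region free of singular points and of recurrence is conjugate to the horizontal translation flow on a strip) shows that all trajectories in $D$ share the same pair of limits, so $D$ is precisely one equivalence class. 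This yields a bijection between the edges of the graph and the $k$ cells; placing each vertex at its singular point and drawing each edge through the interior of the corresponding cell realizes the graph as an embedded --- hence planar --- graph in $\Cc$, with $k+1$ vertices, $k$ edges, and the ribbon structure inherited from $\CP^1$.

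For the tree property it then suffices to prove connectedness, since a connected graph with $k+1$ vertices and $k$ edges is a tree, whereas a disjoint union of $c$ trees on $k+1$ vertices would have $(k+1)-c$ edges and hence $c=1$. Equivalently, I would rule out cycles. A cycle is a cyclic chain of cells glued at singular points, whose union is an annular region $A\subset\CP^1$, so one of the two components $V_0$ of $\CP^1\setminus\ov A$ is enclosed and $\partial V_0$ is a closed curve built from separatrix arcs meeting at $\infty$. Along a separatrix the time form $dz/P_\eps$ is real, and it is holomorphic near $\infty$; integrating it along $\partial V_0$ --- after an arbitrarily small detour around $\infty$ --- therefore yields a real number, which by the residue theorem equals $2\pi i\sum_{z_j\in V_0}\Res\!\left(\frac1{P_\eps},z_j\right)$, so this partial residue sum lies in $i\R$. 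Combined with $\sum_j\Res\!\left(\frac1{P_\eps},z_j\right)=0$ (item~(7)) and the homoclinic-loop criterion of item~(8), this should force a genuine homoclinic loop through $\infty$, contradicting genericity. I expect this flux/residue step --- in particular pinning down the topology of $V_0$ and its boundary precisely enough to extract an actual homoclinic loop, and handling the degenerate case in which the partial residue sum vanishes --- to be the main obstacle; the rest is bookkeeping.

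Finally, for the attachment of the $2k$ separatrices I would take a closed ribbon neighborhood $N$ of the tree $\Gamma$ in $\CP^1$; being a regular neighborhood of a tree, $N$ is a closed disk bounded by a single Jordan curve. Traversing $\partial N$ once meets the vertices in a definite cyclic order, a vertex $z_j$ being met once in each angular sector at $z_j$ between two consecutive edges, and in each such sector lies exactly one separatrix landing at $z_j$; this produces $2k$ marked points on $\partial N$ interleaving edges and separatrices. The complement $\CP^1\setminus N$ is a closed disk containing $\infty$ together with all $2k$ separatrices, which issue from $\infty$ as disjoint spokes; hence they attach to the $2k$ marked points without crossings, and because the cyclic order of the separatrices around $\infty$ is fixed, the whole matching is determined by $\Gamma$ once the attachment of one separatrix to one marked point is prescribed. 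The invariance on a stratum is then immediate: a DES-equivalence is a homeomorphism of $\Cc$ carrying trajectories to trajectories, singular points to singular points, and separatrices to separatrices, hence carrying the planar tree and its separatrix attachment of one generic vector field to those of any DES-equivalent one; and a generic stratum is, by definition, a single DES-equivalence class of generic vector fields.
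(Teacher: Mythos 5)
The paper itself offers no proof of this proposition --- it is quoted from \cite{DES05} --- so your argument can only be measured against the standard DES construction. Your first two steps essentially reproduce it and are sound: the separatrix graph $\Sigma$ with $k+2$ vertices and $2k$ edges is connected because every edge meets $\infty$ (you should also invoke item~(6) so that every finite singular point really is a non-isolated vertex), Euler's formula gives $k$ disk faces, and since the antisaddles are hyperbolic their basins are open, so each connected face lies in a single basin of attraction and a single basin of repulsion, i.e.\ is one equivalence class. The attachment and stratum-invariance parts at the end are also essentially correct.

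The genuine gap is exactly where you flagged it: excluding cycles. Your contour $\partial V_0$ passes not only through $\infty$ but through the finite singular points $z_1,\dots,z_r$ of the cycle, where $1/P_\eps$ has simple poles. The separatrix integrals individually diverge logarithmically there, and a detour of angle $\theta_i$ around $z_i$ contributes $i\theta_i\Res(1/P_\eps,z_i)$, a quantity that is neither real nor purely imaginary and whose angle you do not control; so the conclusion that the partial residue sum lies in $i\R$ does not follow from this contour. Moreover, even granting it, item~(8) is only a \emph{necessary} condition for a homoclinic loop --- a group of singular points with purely imaginary residue sum need not be surrounded by one --- so no contradiction with genericity is reached. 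The clean route uses a fact already at your disposal: every zone is adherent to $\infty$ (both boundary lines of its strip carry an image of $\infty$, Theorem~\ref{thm:temps transvers}(2)). Realize the cycle by one representative trajectory per zone, obtaining a Jordan curve $C\subset\Cc$, and let $U_0$ be the component of $\CP^1\setminus C$ not containing $\infty$. No separatrix can enter $U_0$ (it would have to cross $C$, a union of trajectories and singular points), hence by item~(6) no singular point lies in $U_0$, hence $U_0$ is covered by zones; for each zone $Z$ meeting $U_0$, the set $U_0\cap Z$ is a nonempty flow-invariant open union of sub-strips whose boundary inside $Z$ can only be the chosen trajectory of $Z$, so it contains a full half of the strip and is therefore adherent to $\infty$, contradicting $\infty\notin\ov{U_0}$. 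The same argument excludes two distinct zones sharing the same pair of adherent singular points, a degenerate case your edge--cell bijection silently assumes away; with $k+1$ vertices, $k$ edges and no cycles (in the multigraph sense), the tree property follows.
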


\begin{figure}
  \centering
  \includegraphics{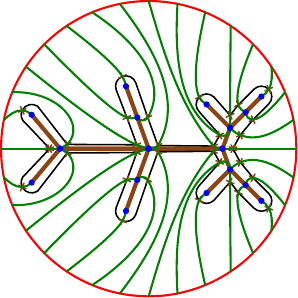}
  \caption{Planar tree graph of a generic vector field and 
  its attachement to the separatrices, where the vertices are blue,
  the egdes are brown and the separatrices are green. The tubular curve around
	the graph is the path used to attach the separatrices, described
  in Proposition~\ref{prop:tree-graph}.}
  \label{fig:tree graph}
\end{figure}

\begin{theorem}\label{thm:involution}\cite{DES05}  Let us consider $\mathcal{P}_{\Cc, k+1}$ for fixed $k$.
\begin{enumerate}
\item The generic strata are in bijection with the combinatorial parts and any combinatorial part is realizable. 
\item The separatrices at $\infty$ separate a neighborhood of infinity into $2k$ sectoral regions, called \emph{ends} at $\infty$. There is a unique involution linking ends two by two by non-crossing curves, which do not intersect the separatrices. This involution has no fixed point.
\item Any non-crossing involution without fixed points between the $2k$ ends is realizable by a combinatorial invariant.
\item The number of generic strata in $\mathcal{P}_{\Cc, k+1}$  is $C(k)$, where
$$C(k)= \frac{\binom{2k}{k}}{k+1}$$ is the $k$-th Catalan number. 
\end{enumerate} \end{theorem}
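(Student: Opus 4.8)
The plan is to follow the argument of \cite{DES05}, whose main lines are as follows. The key construction is the rectifying coordinate $t=\int\frac{dz}{P_\eps(z)}$, in which $X_\eps$ becomes $\del/\del t$ and trajectories become horizontal lines; equivalently, one studies the horizontal foliation of the $1$-form $\omega_\eps=\frac{dz}{P_\eps}$ on $\widehat{\Cc}$, which has $k+1$ simple poles (at the roots of $P_\eps$, with residues summing to $0$) and a zero of order $k-1$ at $\infty$. For a generic $X_\eps$ all finite singular points are simple — each a node or a strong focus, with no centres since $P_\eps'(z_j)\in i\R$ is a non-generic condition — and there is no homoclinic loop, so every separatrix of $\infty$ lands at a finite singular point; the planar tree graph of Proposition~\ref{prop:tree-graph} together with its attachment to the $2k$ separatrices is, by definition, the combinatorial part, and it is locally constant, hence constant on each connected stratum. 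This gives a well-defined map from strata to combinatorial parts. Its injectivity — two generic fields with the same combinatorial part are DES-equivalent — is obtained by building the orbital homeomorphism cell by cell on a decomposition of $\widehat{\Cc}$ into a tubular neighbourhood of the tree (cut along the separatrices), a standard neighbourhood of $\infty$ (unique up to topological equivalence: in the chart $w=1/z$ the field equals $-w^{1-k}(1+O(w))\del/\del w$, which has vanishing residue at $w=0$ and is therefore analytically equivalent to the model $-w^{1-k}\del/\del w$), standard neighbourhoods of the antisaddles (a node and a focus of the same type being topologically equivalent), and flow boxes elsewhere; the combinatorial part dictates how these cells are glued, and one checks the identifications are consistent.

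For part~(2), the $2k$ ends come from the model at $\infty$: the real trajectories of $-w^{1-k}\del/\del w$ asymptotic to $w=0$ run along the $2k$ rays $\arg w\in\{j\pi/k : 0\le j<2k\}$, alternately attracting and repelling, and these $2k$ separatrices cut a punctured neighbourhood of $\infty$ into $2k$ sectors. To produce the involution, remove the $2k$ separatrices from $\widehat{\Cc}$: each connected component of the complement is adjacent to exactly two ends — a count obtained from the tree picture, or from the Euler formula together with the fact, recalled above, that every finite singular point of a generic field receives at least one separatrix — and the involution pairs these two ends. A joining curve is chosen inside each component, so the system of $k$ curves is non-crossing (the components are disjoint); a fixed point is impossible, since it would leave a component adjacent to a single end, forcing a homoclinic loop or a trajectory confined to a finite singular point; and uniqueness holds because any admissible curve must stay in one component and is then determined up to homotopy rel the separatrices. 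In terms of the tree $T$, this is the pairing of the two ends flanking each edge of $T$, patently a fixed-point-free non-crossing involution with $k$ orbits, and constant on the stratum.

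For parts~(3) and~(4), encode the involution of part~(2) as a system of $k$ arches on the $2k$ cyclically ordered ends; by the classical bijection between arch systems and plane trees this datum is equivalent to a rooted plane tree with $k$ edges, every such tree is a legitimate combinatorial part, and by part~(1) it is realized by a generic vector field, which proves~(3). The number of arch systems on $2k$ points obeys the Catalan recursion — the end matched to a fixed base end splits the rest into an inner and an outer arch system — so it equals the $k$-th Catalan number $C(k)=\binom{2k}{k}/(k+1)$, and by the bijection of part~(1) this is the number of generic strata. The delicate point throughout is the realizability used in~(1) and~(3): producing a generic $X_\eps$ with prescribed combinatorics. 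The natural route is to prescribe the horizontal-foliation datum directly on a model translation surface with the required ramification (order $k$ at $\infty$, logarithmic with residues summing to $0$ at the $k+1$ punctures), recover $\omega_\eps$ by integration, and verify the field so obtained is a polynomial vector field of the form~\eqref{eq:P}; in the present paper this is in any case subsumed by the realization theorem of Section~\ref{sec:real}, which one may simply invoke (or~\cite{DES05}).
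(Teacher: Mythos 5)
The paper does not prove this theorem: it is stated in Section~\ref{sec:preliminaries} as a recalled result of \cite{DES05}, so there is no in-paper proof to compare against. Your sketch follows the standard Douady--Estrada--Sentenac strategy --- rectifying coordinate, the $k$ zones each biholomorphic to a horizontal strip carrying two ends at infinity, the induced non-crossing fixed-point-free matching of the $2k$ ends, the arch-system/plane-tree correspondence and the Catalan recursion --- and it is consistent with the machinery the paper does recall (Proposition~\ref{prop:tree-graph}, Theorem~\ref{thm:temps transvers}, Corollaries~\ref{rem:ends} and~\ref{rem:alternative}).

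The one step to push back on is how you dispose of realizability. Theorem~\ref{theo:realize} of Section~\ref{sec:real} only produces polynomials with \emph{real} coefficients from conjugation-symmetric data $(\otau,\eta)$; it cannot realize an arbitrary plane tree with $k$ edges together with an arbitrary attachment to the $2k$ separatrices, since most complex combinatorial parts are not symmetric under $z\mapsto\ov z$. So ``subsumed by the realization theorem of Section~\ref{sec:real}'' is not accurate as stated. What is true is that the first half of the proof of Theorem~\ref{theo:realize} --- the construction of the rectifying Riemann surface $\widetilde M$ by gluing strips according to $\tau$, \emph{before} the antiholomorphic involution $\Sigma$ is introduced --- is exactly the DES construction and does realize any complex combinatorial part paired with any $\eta\in\Hh^k$; your fallback of citing \cite{DES05} directly is the honest route and is what the paper itself does. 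The rest of the sketch (local constancy of the combinatorial part on strata, the cell-by-cell construction of the conjugating homeomorphism, the two-ends-per-zone count giving existence, uniqueness and absence of fixed points of the involution, the Catalan recursion) is sound. One cosmetic remark: the absence of centres for generic fields is cleaner to deduce from the recalled fact that any centre is surrounded by a period annulus whose boundary is a union of homoclinic loops (hence contradicts genericity) than from the real codimension of the condition $P_\eps'(z_j)\in i\R$.
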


\begin{corollary}\label{rem:ends} The non crossing involutions without fixed
points of Theorem~\ref{thm:involution} are in bijection with the noncrossing
involutions without fixed points inside the unit circle linking 2 by 2  the
marked points 
\begin{equation} e_j =
      \exp\left({i\pi{2j-\sgn(j)\over 2k}}\right), \qquad j\in\{\pm1,\ldots,\pm k\}.\label{def:e}
\end{equation}
\end{corollary}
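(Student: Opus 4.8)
The plan is to recognize the statement as a repackaging of parts (2)--(3) of Theorem~\ref{thm:involution}. A non-crossing fixed-point-free involution is a datum that depends only on the cyclic order of the $2k$ objects it matches; the $2k$ ends at $\infty$ and the $2k$ marked points $e_j$ are two cyclically ordered $2k$-element sets, each with an order-preserving labelling by $\{\pm1,\dots,\pm k\}$ (the starting point of the labelling being immaterial), so the content reduces to making this identification canonical and checking that the two ``realizability'' conditions coincide.

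First I would fix the order-preserving bijection between the ends and the $e_j$. Choose $R>0$ so large that $\ol{D}_R=\{\,|z|\le R\,\}$ contains all the (simple) singular points of $X_\eps$ and each of the $2k$ separatrices of $\infty$ crosses $\partial D_R$ transversally in exactly one point; this is possible because in the chart $w=1/z$ the vector field has a zero of order $k-1$ at $\infty$, whose separatrices leave or enter along $2k$ fixed directions, so for $R$ large each is crossed once. The $2k$ crossing points cut $\partial D_R$ into cyclically ordered arcs, and I would declare the arc adherent to the end $E_j$ to carry the label $j$. A linking curve joining $E_a$ to $E_b$ is a simple closed curve through $\infty$ (it runs to $\infty$ inside sector $a$ and returns inside sector $b$), so by the Jordan curve theorem in $\CP^1$ it separates the remaining ends into two groups according to the cyclic order; since two linking curves are disjoint, the induced matching of the labels is non-crossing in the combinatorial sense. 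Conversely, every non-crossing fixed-point-free matching of these labels is realized by the end-involution of some generic vector field — this is exactly Theorem~\ref{thm:involution}(3) (together with part (1)). On the disk side, the $e_j$ are $2k$ distinct points of the unit circle listed in cyclic order by $j$ (the symmetric choice \eqref{def:e} is dictated by the needs of Section~\ref{sec:P_R}, where conjugation will act by $j\mapsto -j$, but any order-preserving list would serve here), and a fixed-point-free matching of them is realizable by pairwise disjoint arcs inside the unit circle precisely when it is non-crossing; there the arcs may even be taken to be straight chords, so realizability is automatic. Matching the two sides through $j\mapsto j$ then yields the asserted bijection, and it carries non-crossing fixed-point-free involutions to non-crossing fixed-point-free involutions because $j\mapsto j$ respects the cyclic order on both sides.

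The step I expect to need the most care is the claim that the correspondence ``ends $\leftrightarrow$ marked points'' is genuinely canonical, i.e. that the ``non-crossing by curves avoiding the separatrices'' condition of Theorem~\ref{thm:involution}(2) really is the purely combinatorial non-crossing condition on $\{\pm1,\dots,\pm k\}$: one inclusion is the Jordan-curve argument above, and the reverse inclusion — building, from an abstract non-crossing matching, a genuine configuration of disjoint curves that avoid the separatrices — is supplied by the realizability statement Theorem~\ref{thm:involution}(3), so it costs nothing extra. As a consistency check, the Catalan number $C(k)$ of Theorem~\ref{thm:involution}(4) reappears on the disk side as the number of non-crossing perfect matchings of $2k$ marked boundary points of a disk.
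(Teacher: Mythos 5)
Your argument is correct and follows the route the paper takes implicitly: the corollary is stated there without proof, as an immediate consequence of Theorem~\ref{thm:involution}, the bijection being precisely the cyclic-order identification of the $2k$ ends at $\infty$ with the $2k$ marked points $e_j$ that you make explicit. Your Jordan-curve argument for one inclusion and your appeal to Theorem~\ref{thm:involution}(3) for the reverse are both sound; you have simply spelled out details the paper regards as evident.
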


\begin{corollary}\label{rem:alternative} Alternative ways to describe the combinatorial parts are the following (\cite{DES05} or \cite{D13}):
\begin{enumerate} 
\item The set of combinatorial parts is in bijection with the set of paths in $\N^2$ from $(0,0)$ to $(2k,0)$ with steps $(1,1)$ and $(1,-1)$. Such a path is called a \emph{Dyck path of semilength $k$}. 
\item The set of combinatorial parts is in bijection with the set of admissible brackets for multiplication of $k+1$ numbers, namely the set of sequences $a_1, \dots, a_{2k}$, with $a_i\in\{(,)\}$, $a_i= ($ for half of the $a_i$,  and such that for each $j\in\{1, \dots, 2k\}$ $$
\# \{i \leq j, a_i=(\} \geq \# \{i \leq j, a_i=)\}.$$
\end{enumerate} \end{corollary}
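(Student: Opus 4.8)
The strategy is to route everything through the non-crossing fixed-point-free involutions already shown to classify combinatorial parts in Theorem~\ref{thm:involution} and Corollary~\ref{rem:ends}, and then to apply the two classical ``Catalan'' bijections. First I would pass from the circular picture to a linear one: the marked points $e_j$, $j\in\{\pm1,\dots,\pm k\}$, of Corollary~\ref{rem:ends} come with a distinguished ordering given by their labels, so I cut the unit circle at the gap meeting the positive real axis (between $e_{-1}$ and $e_1$) and list the marked points counterclockwise as $p_1=e_1,\dots,p_k=e_k,p_{k+1}=e_{-k},\dots,p_{2k}=e_{-1}$. Cutting a circular chord diagram at a point lying on no chord preserves non-crossing-ness in both directions (two chords cross iff their four endpoints alternate cyclically, a condition independent of where one cuts), so a non-crossing fixed-point-free involution of the $e_j$ is the same datum as a non-crossing perfect matching of the linearly ordered set $p_1,\dots,p_{2k}$; by Theorem~\ref{thm:involution} the combinatorial parts are thus in bijection with these matchings.

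For item~(2), I would send a non-crossing perfect matching of $p_1,\dots,p_{2k}$ to the word $a_1\cdots a_{2k}$ defined by $a_i=($ and $a_j=)$ whenever $\{p_i,p_j\}$ is a matched pair with $i<j$. Exactly $k$ of the letters equal $($, and for each $j$ the quantity $\#\{i\le j : a_i=(\}-\#\{i\le j : a_i=)\}$ equals the number of matched pairs straddling $j$, hence is $\ge 0$; so the word is admissible. The inverse assigns to an admissible word the matching pairing each $)$ with the nearest preceding still-unmatched $($: this is well defined by admissibility, it is non-crossing by construction, and on non-crossing matchings it inverts the previous map, since in a non-crossing matching the partner of a closing bracket is necessarily the most recently opened unmatched one. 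This is the classical bijection between non-crossing matchings and balanced parenthesizations, and composed with the first step it gives~(2).

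For item~(1), I would send an admissible word $a_1\cdots a_{2k}$ to the lattice path whose $i$-th step is $(1,1)$ if $a_i=($ and $(1,-1)$ if $a_i=)$. Having $k$ opening brackets means the path ends at $(2k,0)$, and the admissibility inequality is exactly the statement that every partial height is $\ge 0$, i.e. the path stays in $\N^2$; this is visibly a bijection onto the Dyck paths of semilength $k$, and composing with~(2) yields~(1). As a consistency check, all three families have cardinality the $k$-th Catalan number $C(k)$, in agreement with Theorem~\ref{thm:involution}(4). The only point requiring genuine care — though it is entirely classical — is the equivalence ``non-crossing $\Leftrightarrow$ admissible'' in the middle step, namely checking that cutting the circle introduces no crossings and that the nearest-unmatched-bracket pairing is a two-sided inverse; the remaining steps are routine bookkeeping.
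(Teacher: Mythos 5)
Your proof is correct. The paper does not actually prove this corollary — it simply cites \cite{DES05} and \cite{D13} — and the argument it implicitly relies on is exactly the chain of classical bijections you spell out: combinatorial parts $\leftrightarrow$ non-crossing fixed-point-free involutions of the $2k$ ends (Theorem~\ref{thm:involution} and Corollary~\ref{rem:ends}), then cut the circle at the gap between $e_{-1}$ and $e_1$ to get a linear non-crossing perfect matching, then apply the standard matching/parenthesization/Dyck-path correspondences. Your verifications of the two delicate points (that cutting at a non-endpoint preserves the crossing relation, and that the nearest-unmatched-bracket rule is a two-sided inverse on non-crossing matchings) are sound, so nothing is missing.
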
 

The attachment of the singular points can be recovered from the combinatorial invariant. This is described in the following proposition.

\begin{proposition}\label{attachment} \cite{DES05} Let $s_j$ be the separatrix of infinity tangent to the direction $\exp\left(\frac{\pi i j}{2k}\right)$, $j=0, \dots, 2k-1$. Let the permutation $\sigma$ be defined on $\{0,\ldots,2k-1\}$ by $$\sigma(j) := \tau(j+1)\quad ({\rm mod}\: 2k).$$ For each $j$, there exists a minimal $r_j>0$ such that $
\sigma^{\circ r_j}(j)= j$. This partitions $\{0,\ldots,2k-1\}$ into $k+1$ subsets of the form $\{j, \sigma(j), \dots, \sigma^{r_j-1}(j)\}$. Each subset is  the subset of the indices of the separatrices landing at one singular point. The subsets are the equivalence classes of the quotient $\{0,\ldots,2k-1\}/\sigma$. The equivalence classes are noted $[j]$. 
\end{proposition}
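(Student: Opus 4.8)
The plan is to derive the partition from the geometry of the tree graph together with the attachment of separatrices described in Proposition~\ref{prop:tree-graph}, and to show that the bookkeeping of ``the next separatrix encountered when turning around the graph'' is precisely encoded by the permutation $\sigma$. First I would recall that near $\infty$ the vector field has the behavior of a saddle with $2k$ separatrices $s_0,\dots,s_{2k-1}$, tangent to the directions $\exp(\pi i j/(2k))$, and that each separatrix either lands at a simple singular point or (in the non-generic case excluded here) forms a homoclinic loop. The involution $\tau$ pairing the $2k$ ends is the one provided by Theorem~\ref{thm:involution}(2): an end between consecutive separatrices is connected by a non-crossing curve avoiding the separatrices to exactly one other end. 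So the data of the stratum is the non-crossing fixed-point-free involution $\tau$ on the ends, equivalently on the index set, and the claim is a purely combinatorial reconstruction of which separatrices co-land.

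The key geometric step is the following local picture: take a large circle $C$ bounding a disk containing all finite singular points; $C$ meets the $2k$ separatrices in cyclic order, cutting $C$ into $2k$ arcs (the traces of the ends). Fix a singular point $p$ and look at the separatrices $s_{j_1},\dots,s_{j_r}$ landing at it, in the cyclic order in which they leave $p$. Between two cyclically consecutive such separatrices $s_{j_a}$ and $s_{j_{a+1}}$ the region swept out (a ``petal'' of the flower at $p$) is bounded, apart from arcs of $C$ and the two separatrices, by a chain of trajectories and by other singular points lying ``further from $p$'' in the tree. Turning along the tubular curve around the graph and recording successive separatrix-encounters, one passes from an end adjacent to $s_{j_a}$ on one side, travels through the petal region, and comes back to the end adjacent to $s_{j_{a+1}}$ on the other side: this is exactly the statement that the end on one side of $s_{j_a}$ is $\tau$-paired to the end on one side of $s_{j_{a+1}}$. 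Translating the adjacency ``the end following $s_j$'' into the index shift $j\mapsto j+1$, the relation $\tau(j+1)$ lands one in the end preceding the separatrix landing next at the same singular point, i.e.\ $\sigma(j)=\tau(j+1)\bmod 2k$ sends the index of one separatrix at $p$ to the index of the next one around $p$. Hence the $\sigma$-orbit of $j$ is exactly the full set of indices of separatrices landing at the singular point where $s_j$ lands.

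From this the combinatorial assertions follow formally. Since $\sigma$ is a permutation of the finite set $\{0,\dots,2k-1\}$, each $j$ has a minimal period $r_j>0$ with $\sigma^{\circ r_j}(j)=j$, and the orbits $[j]=\{j,\sigma(j),\dots,\sigma^{r_j-1}(j)\}$ partition the index set; by the previous paragraph these orbits are precisely the co-landing classes of separatrices. It remains to check that there are exactly $k+1$ of them: this is forced because a generic $X_\eps$ has exactly $k+1$ simple finite singular points (the degree of $P_\eps$ minus, well, $P_\eps$ has degree $k+1$ so $k+1$ roots, all simple in the generic case), each receiving at least one separatrix by item (6) of the first Proposition, and every separatrix lands at some singular point by item (1); so the $k+1$ singular points induce a partition of the $2k$ separatrices into $k+1$ nonempty blocks, matching the $k+1$ orbits of $\sigma$. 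Alternatively, and more self-containedly, one notes that the planar tree graph has $k+1$ vertices and $k$ edges, that each edge accounts for a pair of ``petals'' hence the Euler-type count $\sum_j r_j = 2k$ with the number of vertices equal to (number of ends)$-$(number of edges)$+1 = 2k - (2k) \cdots$; I would instead simply invoke that the graph in Proposition~\ref{prop:tree-graph} is a tree on $k+1$ vertices, which is the cleanest route.

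The main obstacle I anticipate is making the petal/adjacency argument of the second paragraph fully rigorous: one must carefully orient the tubular curve around the tree graph, fix once and for all the convention relating ``the separatrix with index $j$'' to ``the end on its counterclockwise side'', and verify that with that convention the shift is $j\mapsto j+1$ and not $j\mapsto j-1$, and that $\tau$ is being composed on the correct side. This is essentially a careful unwinding of the figure together with the non-crossing property of $\tau$ (two ends paired by $\tau$ bound, with a chain of separatrices and trajectories, a region containing a subtree), and once the orientation conventions are pinned down the rest is bookkeeping. I would therefore devote the bulk of the write-up to setting up these conventions precisely and then state the orbit identification as an immediate consequence, relegating the counting of orbits to the tree structure already established in Proposition~\ref{prop:tree-graph}.
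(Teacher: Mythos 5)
The paper states this proposition without proof, quoting it from [DES05] as part of the preliminaries, so there is no internal argument to compare against. Your reconstruction is the standard one and is sound: each zone is adherent to exactly two singular points and has exactly two ends at infinity paired by $\tau$, so the end on the appropriate side of a separatrix $s_j$ landing at $p$ is $\tau$-paired with the end on the corresponding side of the next separatrix landing at $p$ in the cyclic order around $p$; this is precisely $\sigma(j)=\tau(j+1)\bmod 2k$, and iterating gives that the $\sigma$-orbits are the co-landing classes. For the count of $k+1$ orbits, the clean route you end up choosing is the right one: a generic vector field has $k+1$ simple singular points, each an antisaddle receiving at least one separatrix (item (6) of the first proposition, centers being excluded generically), so the $2k$ separatrices fall into exactly $k+1$ nonempty co-landing blocks; you can drop the garbled Euler-count digression entirely. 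The only real work left, as you correctly identify, is fixing the orientation convention matching the end indexed $j+1$ to the counterclockwise side of $s_j$ and checking that $\tau$ is composed on the correct side; that is bookkeeping against Figure~\ref{fig:tree graph} and the non-crossing property, not a conceptual gap.
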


\subsection{The analytic part of the invariant of a structurally stable vector field}

Inside a given generic stratum of $\mathcal{P}_{\Cc, k+1}$, a vector field of $X_\eps\in \mathcal{P}_{\Cc, k+1}$ is characterized by the \emph{analytic part} of its invariant, which is defined as follows. 

\begin{theorem}\cite{DES05}\label{thm:temps transvers}
Let $X_\eps\in \mathcal{P}_{\Cc, k+1}$ be a generic vector field. 
\begin{enumerate}
\item The union of the separatrices, called the \emph{separatrix
graph}, divides $\Cc$ into $k$ open regions, called \emph{zones},
adherent to exactly two singular points, one attracting, one
repelling. Each zone is the union of all equivalent trajectories
joining its two adherent points. 

\item In the rectifying coordinate $t= \int\frac{dz}{P_\eps(z)}$,
each zone is biholomorphic to a horizontal strip.  Images of
$\infty$ (which can be reached in finite time) appear, one on the upper boundary, one on the
lower boundary of the strip. Let  $\eta\in \Hh$ be the vector joining the lower image of
$\infty$ to the upper one. Let $\theta\in(0,\pi)$ be the argument of
$\eta$. The image in $z$-space of the line segment $[0,\eta]$ in
$t$-space is a homoclinic loop of $Y_\eps= e^{-i\theta} X_\eps$.
This loop is included in the zone. In particular, it has empty
intersection with the separatrix graph and intersects one edge of
the planar tree graph. We call $\eta$ the \emph{transversal time} of
the zone. 
\item Using the marked points $e'_{j} = e^{i\pi {2j-1
\over 2k}}$, let $\eta_j = \eta_{(2j-1),\tau(2j-1)}$ for
$j\in\{1,\ldots,k\}$, where $\eta_{\ell,\tau(\ell)}$ is the transversal
time of the zone with ends at infinity  in the direction $e'_\ell$ and $e'_{\tau(\ell)}$ for odd $\ell$. 
The \emph{analytic part of the invariant} is the vector $\eta = 
(\eta_1,\ldots,\eta_k)\in \Hh^k$.
%\item The collection of the $k$ transversal times for all zones is the \emph{analytic part} of the invariant of $X_\eps$. {\color{red} This analytic part is a vector $\mathbf{\eta}\in\Hh^k$.} 
\item Any $X_\eps\in \mathcal{P}_{\Cc, k+1}$ is uniquely determined by the combinatorial and analytic parts of its invariant. 
   \end{enumerate} 
   Any stratum of generic vector fields is analytically parameterized by $\Hh^k$, namely the map $\mathbf{\eta}\mapsto \eps$ is holomorphic. In particular, any couple of a combinatorial part and an analytic part is realizable by a unique generic vector field in $\mathcal{P}_{\Cc, k+1}$.
\end{theorem}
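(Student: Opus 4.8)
The plan is to follow the Douady--Estrada--Sentenac strategy: build the map $\eps\mapsto\eta$ by rectification, show it is holomorphic and injective, establish the realization by uniformizing a glued translation surface, and then deduce that the inverse map is automatically holomorphic. First I would check holomorphy of $\eps\mapsto\eta$ directly. For a generic $X_\eps$ the coordinate $t=\int dz/P_\eps(z)$ is multivalued off the singular points, but its restriction to a zone $Z$ is a biholomorphism onto a maximal \emph{horizontal} strip: the two boundary components of $Z$ are equivalence classes of separatrices of $\infty$, hence map to horizontal lines, and $\operatorname{Im}t$ stays bounded on $Z$ because $\infty$ is reached in finite time. Each boundary line carries exactly one image of $\infty$, the two images differ by a vector in $\Hh$, and picking the marked zones as in item~(3) yields $\eta=(\eta_1,\dots,\eta_k)$. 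Since both the integrand $dz/P_\eps$ and the locations of the (simple) singular points depend holomorphically on $\eps$ throughout the open generic stratum, the strips and hence the $\eta_j$ depend holomorphically on $\eps$.

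Next I would treat injectivity and realization together, through the inverse gluing construction. Fix a combinatorial part and a vector $\eta\in\Hh^k$. Take $k$ abstract horizontal strips, each equipped with the translation vector field $\frac{\partial}{\partial t}$, the $j$-th one having transversal vector $\eta_j$. The combinatorial part prescribes an identification, by translations, of the $2k$ boundary half-lines in pairs, each gluing matching the attracting end of one strip to the repelling end of another, compatibly with the non-crossing involution of the ends and with the planar tree. Carrying out these gluings, and adjoining one limit point per vertex of the tree (a prospective simple singular point), produces a Riemann surface $M$ carrying a translation structure with only simple zeros. The decisive step is to identify $M$ conformally: the tree structure forces $M$ to be simply connected, it is non-compact with a single remaining end, and one verifies it is parabolic, so $M\cong\Cc$ by uniformization. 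Under this identification the translation structure becomes a vector field $P(z)\frac{\partial}{\partial z}$, which extends to a meromorphic vector field on $\CP^1$ holomorphic on $\Cc$; the $k$-fold ramification of $t$ at the end --- equivalently, the pole of order $k-1$ at $\infty$ --- forces $\deg P=k+1$. An affine change of coordinate normalizes $P$ to the form~\eqref{eq:P}, yielding a generic $X_\eps\in\mathcal{P}_{\Cc,k+1}$ with the prescribed combinatorial and analytic parts; this proves surjectivity of $\eps\mapsto\eta$ onto $\Hh^k$. Running the same identification starting from a given generic $X_\eps$ shows it is recovered from its separatrix graph and transversal times, uniquely up to the affine group killed by the normalization; hence $\eps\mapsto\eta$ is injective on the stratum.

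To finish, note that the stratum is an open subset of $\Cc^k$ and $\eps\mapsto\eta$ is a holomorphic injection into $\Hh^k$, a complex manifold of the same dimension $k$. By the classical fact that an injective holomorphic map between equidimensional complex manifolds is a biholomorphism onto its image, the inverse $\eta\mapsto\eps$ is holomorphic, and by the realization step its domain is all of $\Hh^k$. This gives the analytic parametrization of each generic stratum by $\Hh^k$ together with the existence and uniqueness of a generic vector field realizing any prescribed pair of a combinatorial and an analytic part.

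I expect the main obstacle to be the conformal identification $M\cong\Cc$ in the realization step: one must rule out the hyperbolic case (parabolicity of the glued surface) and control the single end precisely enough that the translation vector field extends meromorphically across it with a pole of order exactly $k-1$, so that $P$ is a polynomial of degree $k+1$. Invoking the equidimensional-injectivity theorem sidesteps a second delicate point that would otherwise arise, namely the holomorphic dependence of the uniformizing coordinate of $M$ on the gluing parameter $\eta$.
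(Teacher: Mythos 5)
This theorem is not proved in the paper --- it is quoted from [DES05] --- but the analogous construction is carried out in detail in Section~\ref{sec:real} for the real-coefficient case, so that is the natural point of comparison. Your outline follows the same DES strategy (rectify, read off the strip data, invert by gluing translation strips and uniformizing), with two genuine variations. First, where the paper (and DES) compactifies the glued surface by identifying the $2k$ marked boundary points to a single point $E$ and exhibiting the explicit chart $\zeta = C/(Z-E)^{1/k}$, so that $\widetilde M$ is a compact simply connected Riemann surface and uniformization gives $\hat\Cc$ with no type problem to solve, you keep the surface non-compact and must settle whether it is parabolic or hyperbolic. You correctly flag this as the main obstacle, but ``one verifies it is parabolic'' is exactly the step that the ramified chart resolves: the end of the glued surface is a cyclic gluing of $2k$ half-planes, which that chart identifies with a punctured disk, simultaneously showing that the end is a puncture and that the pushed-forward field has a pole of order exactly $k-1$ at $\infty$, hence $\deg P = k+1$; without some such explicit computation both your parabolicity claim and your pole-order claim are unsupported, and they are really the same computation. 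Second, your appeal to the equidimensional-injectivity theorem to obtain holomorphy of $\eta\mapsto\eps$ is a legitimate shortcut where the source simply asserts it. One further gloss in the uniqueness step: normalizing $P$ to monic centered form kills the affine group only up to $z\mapsto \zeta z$ with $\zeta^k=1$; this residual ambiguity is what the marked separatrix $s_0$ (equivalently the choice of the chart $h_E$ in Section~\ref{sec:real}) eliminates, so ``uniquely up to the affine group killed by the normalization'' should explicitly invoke the attachment of the separatrices contained in the combinatorial part.
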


\section{Generic vector fields in $\mathcal P_{\R,k+1}$}\label{sec:P_R}

\begin{definition} A vector field $X\in \mathcal P_{\R,k+1}$ is
  \emph{generic} if it is structurally stable.
\end{definition}

The following proposition is immediate. \begin{proposition} A vector field $X\in \mathcal P_{\R,k+1}$ is generic if and only if it has only simple singular points
  and $\R$ is the only possible homoclinic loop.\end{proposition}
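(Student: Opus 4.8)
The plan is to deduce this from the known characterization of structural stability in $\mathcal{P}_{\Cc,k+1}$ (the second Proposition in Section~\ref{sec:preliminaries}, stating that $X_\eps$ is generic in $\mathcal{P}_{\Cc,k+1}$ iff it has only simple singular points and no homoclinic loop), adapted to the fact that $\mathcal{P}_{\R,k+1}$ sits inside $\mathcal{P}_{\Cc,k+1}$ as the real slice $\R^k\subset\Cc^k$, and that the reflection $z\mapsto\ov z$ (together with $t\mapsto -t$, i.e. an anti-holomorphic symmetry) is a symmetry of every $X\in\mathcal{P}_{\R,k+1}$: its singular points come in complex-conjugate pairs (or lie on $\R$), and its separatrix graph is symmetric under conjugation. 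The key observation is that the real line $\R$ is itself invariant under the flow (since $P$ has real coefficients), so it is a union of trajectories and singular points; in particular $\R$ can close up into a ``homoclinic loop through infinity'' (passing through the pole at $\infty$) without this being a non-generic feature inside $\mathcal{P}_{\R,k+1}$, because the whole nearby family also has this invariant line. This is why the statement must read ``$\R$ is the only possible homoclinic loop'' rather than ``no homoclinic loop''.

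First I would establish the forward direction: if $X$ is structurally stable in $\mathcal{P}_{\R,k+1}$ then it has only simple singular points and any homoclinic loop is $\R$. For the first part, if $X$ had a multiple singular point $z_0$, then since conjugate points coincide we may assume (by the real structure) either $z_0\in\R$ or a conjugate pair of multiple points; in either case an arbitrarily small real perturbation of the coefficients $\eps$ splits $z_0$, changing the combinatorial type, contradicting structural stability. For the second part, suppose $X$ has a homoclinic loop $\gamma\neq\R$. By Proposition in Section~\ref{sec:preliminaries} the loop satisfies a residue condition $\sum\Res(1/P_\eps,z_{j_\ell})\in i\R^*$; under a generic real perturbation this sum leaves $i\R$, and the loop breaks, again changing the topological type. (If $\gamma$ were conjugation-symmetric and not equal to $\R$ one must check that the symmetric perturbation still destroys it; if $\gamma$ is not symmetric, then $\ov\gamma$ is a second homoclinic loop and real perturbations move the pair off the locus.) The case $\gamma=\R$ is genuinely different: $\R$ is a homoclinic loop forced by the real structure and present for all nearby parameters, so it is not an obstruction to structural stability — it persists and the topological type is locally constant.

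For the converse, suppose $X_\eps$ with $\eps\in\R^k$ has only simple singular points and $\R$ is its only homoclinic loop (possibly $\R$ is not even a homoclinic loop). I would argue structural stability within $\mathcal{P}_{\R,k+1}$ by reducing to the complex case. If $X_\eps$ has \emph{no} homoclinic loop at all, then it is generic in $\mathcal{P}_{\Cc,k+1}$, hence DES-equivalent to all complex-nearby vector fields; restricting to real-nearby parameters gives structural stability in $\mathcal{P}_{\R,k+1}$. If the only homoclinic loop is $\R$, then $X_\eps$ is \emph{not} generic in $\mathcal{P}_{\Cc,k+1}$, so the argument must be done by hand: one shows that the separatrix graph, the attachment of separatrices, and the symmetric configuration of zones depend continuously on $\eps\in\R^k$ near $\eps$, using that the only codimension-one phenomenon (a homoclinic loop) that can occur for real $\eps$ near $\eps$ is again $\R$ — indeed any other homoclinic loop would require the residue sum to be in $i\R^*$, and by the conjugation symmetry the relevant residue sums for non-$\R$-loops are either real or come in conjugate pairs, so they cannot cross $i\R^*$ along a real curve generically; a transversality/genericity argument on the analytic invariant $\eta\in\Hh^k$ (Theorem~\ref{thm:temps transvers}) pins this down. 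Hence the combinatorial part is locally constant and $X_\eps$ is structurally stable.

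The main obstacle I anticipate is the case $\gamma = \R$: making precise that a homoclinic loop along the invariant real axis does \emph{not} destroy structural stability, and conversely that no \emph{other} homoclinic loop can appear for real parameters arbitrarily close to $\eps$ without passing through a genuine bifurcation. This requires understanding how the transversal times $\eta_j$ of the zones behave under the conjugation symmetry $z\mapsto\ov z$ — presumably the symmetry acts on the analytic invariant by an involution on $\Hh^k$ (conjugating and permuting coordinates via $\tau$), and the locus where some non-real homoclinic loop exists corresponds to $\eta_j\in\R$ for the relevant $j$, which is the boundary $\partial\Hh$, not something a small real perturbation inside the real slice is forced to hit. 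Carefully identifying the real slice's image under $\eta\mapsto\eps$ and checking which bifurcation loci it can meet transversally is the technical heart; everything else is a routine transcription of the Douady–Estrada–Sentenac picture equivariantly under complex conjugation.
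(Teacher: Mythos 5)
The paper offers no proof of this proposition --- it is declared immediate, as a restriction of the complex theory of Section~\ref{sec:preliminaries} to the real slice $\R^k\subset\Cc^k$ --- and your argument is the natural expansion of exactly that reasoning; it is essentially correct. One hedge you leave open can be closed cleanly: a homoclinic loop $\gamma\neq\R$ that is symmetric under $z\mapsto\ov z$ cannot exist at all, since the group of singular points it surrounds would be conjugation-symmetric, making its residue sum real, whereas the loop condition forces that sum to lie in $i\R^*$; so only conjugate pairs of non-real loops need to be broken. The one point genuinely requiring an argument beyond DES is that, for such a non-symmetric loop, the condition $\Re\bigl(\sum_\ell\Res(1/P_\eps,z_{j_\ell})\bigr)=0$ cuts out a set of real codimension $1$ in $\R^k$ rather than holding identically on the real slice --- you correctly flag this as the technical heart, and the paper itself only asserts it later (Section~\ref{sec:structure}) without proof.
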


\subsection{The combinatorial part of the invariant}

One combinatorial invariant is presented in \cite{BD10} in the general case of polynomial vector fields
with homoclinic loops and/or parabolic points. We will not need the full generality, so we present a simplified
version.

For a generic $X\in \mathcal P_{\R,k+1}$, when there are singular points on the real
axis, the strata are completely determined by their combinatorial type, i.e. a
non-crossing involution without fixed points $\tau$ inside the unit circle linking 2
by 2  the marked points $e_j$ defined in \eqref{def:e}  so that: 
\begin{itemize}
\item The links do not cross; 
\item A marked point $e_j$ can only be linked to a marked point $e_{j'}$ such that either $j-j'$ is odd and $jj'>0$, or $j'=-j$; 
\item If $e_j$ is linked to $e_{j'}$ with $jj'>0$, then $e_{-j}$ is linked to $e_{-j'}$. 
\end{itemize}
Hence it suffices to describe the linkings of points $e_{j}$ for $j>0$. We set
$\otau(j) := |\tau(j)|$ for $j>0$, which defines a non crossing involution of
$\{1,\ldots, k\}$. 

There is a slight difference when there is no singular point on the real axis: marked
points in each hemicycle can only be paired with marked points of the same
hemicycle. But in  that case $k$ is odd, and there is an odd number of marked
points. Hence no pairing using all marked points is possible and any pairing of
$k-1$ points linking $e_1$ to some $e_j$ ($j$ even), is equivalent to
the pairing where $e_k$ is linked to $e_j$.

\begin{definition}[Combinatorial invariant for generic $X\in \mathcal P_{\R,k+1}$]
  \label{def:invariant comb}
  When there are singular points on the real axis, the combinatorial invariant
  is the non crossing involution $\otau$ of $\{1,\ldots, k\}$ described above.
  When the real axis is a homoclinic loop, the combinatorial invariant
  is the non crossing involution $\otau$ of $\{1,\ldots,k-1\}$, with no fixed point described above. \end{definition}

 Recovering the attachment of the separatrices to the singular points is described in Proposition~\ref{attachment} when there is at least one real singular point. Hence we only need to describe the case when the real axis is a homoclinic loop.

\begin{proposition}\label{attachment_m=0}  Let $k$ be odd and consider the case where there are no real singular points. There are $\frac{k+1}2$ singular points and $k-1$ free separatrices in each of the upper and lower half-planes. Let $s_j$ be the separatrix of infinity tangent to the direction $\exp\left(\frac{\pi i j}{2k}\right)$, $j=1, \dots, k-1$ in the upper half-plane. Let the permutation $\ov{\sigma}$ be defined on $\{1,\ldots,k-1\}$ by $$\ov{\sigma}(j) := \otau(j+1)\quad ({\rm mod}\: k-1).$$ Each equivalence class of the quotient $\{1,\ldots,k-1\}/{\ov{\sigma}}$  is  the subset of the indices of the separatrices landing at one singular point. The equivalence classes are noted~$[j]$. 
\end{proposition}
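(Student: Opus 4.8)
The plan is to reduce the statement to the upper half-plane by means of the symmetry $z\mapsto\ov z$, and then to replay there the argument that proves Proposition~\ref{attachment}. Since $P_\eps$ has real coefficients, $z\mapsto\ov z$ sends every trajectory of $X_\eps$ to a trajectory with the same time parametrization, so the phase portrait, the separatrix graph, and the partition of the separatrices of $\infty$ according to the singular point at which they land are all invariant under complex conjugation. As $\R$ is a homoclinic loop it is an invariant curve whose complement in $\Cc$ is the union of the two open half-planes; hence no trajectory, and in particular no separatrix, meets both half-planes, and conjugation carries the $\tfrac{k+1}2$ singular points of the upper half-plane bijectively onto those of the lower one, with no fixed point since there is no real singular point, and carries the free separatrices $s_1,\dots,s_{k-1}$ bijectively onto the $k-1$ free separatrices of the lower half-plane. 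Therefore the partition to be computed is determined by its restriction to $\{s_1,\dots,s_{k-1}\}$, and it suffices to show that this restriction is the partition of $\{1,\dots,k-1\}$ into the orbits of $\ov\sigma$.

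Next I would imitate the proof of Proposition~\ref{attachment} inside the closed disk $\ol\Hh\cup\{\infty\}$ bounded by the homoclinic loop. There the separatrix graph is a planar tree whose vertices are the $\tfrac{k+1}2$ singular points and which meets the boundary circle exactly in the $k-1$ free separatrices $s_1,\dots,s_{k-1}$; going around the boundary these alternate with $k$ upper ends at $\infty$, of which the $k-1$ ends indexed by $\{1,\dots,k-1\}$ are matched among themselves by the non-crossing fixed-point-free involution $\otau$ of Definition~\ref{def:invariant comb}, while the one remaining upper end (adjacent to the negative real axis) is an end of the zone that carries $\R$ as a homoclinic loop and is matched by $\tau$ with its mirror image across the real axis. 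Exactly as in the proof of Proposition~\ref{attachment}, turning counterclockwise about a vertex one leaves it along a free separatrix $s_j$, enters the adjacent upper end, is carried by $\tau$ to the other end of the same zone, and returns to the vertex along the free separatrix $s_{\otau(j+1)}$, indices being read in $\{1,\dots,k-1\}$, that is, modulo $k-1$ --- the reduction expressing the identification, through the homoclinic loop $\R$, of the two upper ends adjacent to the real axis. Hence the free separatrices landing at a given singular point form exactly one orbit of $\ov\sigma(j)=\otau(j+1)\ (\mathrm{mod}\ k-1)$, and since by the same bookkeeping as in Proposition~\ref{attachment} the map $\ov\sigma$ has $\tfrac{k-1}2+1=\tfrac{k+1}2$ orbits, these correspond one-to-one to the singular points, which is the assertion.

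The delicate point is this second step: one must check that restricting the walk to the half-plane disk reproduces the combinatorial rule with the correct index arithmetic, namely modulo $k-1$ rather than modulo $k$ or $k+1$. This is precisely the reason Definition~\ref{def:invariant comb} records a fixed-point-free involution $\otau$ of $\{1,\dots,k-1\}$ and not of all $k$ upper ends: the upper end adjacent to the negative real axis belongs to the zone that carries the homoclinic loop, and suppressing it --- equivalently, identifying it with the upper end adjacent to the positive real axis --- is exactly the operation that turns the full-plane rule $\sigma(j)=\tau(j+1)\ (\mathrm{mod}\ 2k)$ of Proposition~\ref{attachment} into the formula for $\ov\sigma$. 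Alternatively, the same conclusion follows by specializing to the present conjugation-symmetric situation the combinatorial description of polynomial vector fields with homoclinic loops developed in \cite{BD10}.
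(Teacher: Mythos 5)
Your argument is correct, and it follows the route the paper itself relies on implicitly: Proposition~\ref{attachment_m=0} is stated without proof as the adaptation of Proposition~\ref{attachment} to the upper half-plane, and the reduction you use (conjugation symmetry plus identifying, through the homoclinic loop $\R$, the two upper ends adjacent to the real axis, which is what produces the arithmetic modulo $k-1$) is exactly the quotient-of-the-real-axis device the paper invokes elsewhere, e.g.\ in the proof of Theorem~\ref{Prop1}. The only imprecision is terminological --- what you walk around is the planar tree graph together with the attached free separatrices, not the separatrix graph itself, and the zone carrying the homoclinic loop actually has the three ends $e_1$, $e_{\otau(1)}$ and $e_k$ on its boundary --- but neither point affects the validity of the argument.
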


Note that $j\mapsto -\otau(-j)$ allows us to partition $\{-1,\ldots,
-k+1\}$ in a similar way for the separatrices in the lower
half-plane. In particular, we have $[-j] = -[j]$.

\subsection{The analytic part of the invariant for a generic vector field of $\mathcal{P}_{\R, k+1}$}

\begin{proposition}\label{thm:R_temps transvers}
Let $X_\eps\in \mathcal{P}_{\R, k+1}$ be a generic vector field, and let $m$ be the number of real singular points. 
\begin{enumerate}
\item The transversal times of zones symmetric with respect to the $\R$-axis are elements of $i\R^+$ (these zones correspond to fixed point of $\otau$). There are $m-1$ such transversal times. 

\item The transversal times of pairs of zones symmetric to one another with
respect to the $\R$-axis are of the form $\eta_{j,\otau(j)},
-\ov{\eta_{j,\otau(j)}} \in \Hh$   for
$j,\otau(j)\in\{1, \dots, k\}$, $j$ odd. In the case $m=0$, then
$j,\otau(j)\in\{1, \dots, k-1\}$.

\item When $m=0$, the period of the $\R$-axis is an element of $\R^+$ that we call $\eta_\R$.

\end{enumerate}\end{proposition}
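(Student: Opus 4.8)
The plan is to deduce Proposition~\ref{thm:R_temps transvers} from the complex theory recalled in Theorem~\ref{thm:temps transvers} by exploiting the $\R$-symmetry of $X_\eps$. Since $P_\eps$ has real coefficients, the map $z\mapsto\ov z$ conjugates the flow of $X_\eps$ to its time-reversal, hence $\sigma\colon z\mapsto\ov z$ is an (orientation-reversing) symmetry of the real phase portrait which permutes the separatrices at infinity, the singular points, and therefore the zones. In the rectifying coordinate $t=\int dz/P_\eps(z)$, because $P_\eps$ is real, complex conjugation in $z$ corresponds to complex conjugation in $t$ (up to a real additive constant on each strip), so on a zone that is globally invariant under $\sigma$ the rectifying chart intertwines $z\mapsto\ov z$ with $t\mapsto\ov t$; on a pair of zones exchanged by $\sigma$ it intertwines the two charts via $t\mapsto\ov t$.

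First I would prove (1). A zone fixed by $\sigma$ is one whose two adherent singular points are real (or one of which is real and which is mapped to itself); its image strip in $t$-space is invariant under $t\mapsto\ov t$ composed with the appropriate real translation, so the strip is symmetric about a horizontal line, which forces its two marked images of $\infty$ (one on the top boundary, one on the bottom) to be mirror images of each other across that line. Hence the transversal time $\eta$, the vector from the lower image of $\infty$ to the upper one, is purely vertical, i.e. $\eta\in i\R^+$ (it lies in $\Hh$ by Theorem~\ref{thm:temps transvers}(2), so the sign is forced). For the count: the $m$ real singular points lie on $\R$ in some order and the real axis, minus the singular points, consists of $m-1$ bounded intervals plus two unbounded ones; each bounded interval lies in a unique $\sigma$-invariant zone, and conversely each $\sigma$-invariant zone meets $\R$ in exactly one such interval, giving exactly $m-1$ such zones and matching the statement that these are the fixed points of $\otau$.

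Next I would prove (2). If a zone $Z$ is not fixed by $\sigma$, then $\sigma(Z)$ is a different zone, and the two are exchanged; writing $\eta$ for the transversal time of $Z$, the rectifying chart of $\sigma(Z)$ is obtained from that of $Z$ by $t\mapsto\ov t$ (after a real translation), which reverses the orientation of the top/bottom boundaries, so the transversal time of $\sigma(Z)$, still taken from the lower image of $\infty$ to the upper one so as to lie in $\Hh$, is $-\ov\eta$. This gives the asserted form $\eta_{j,\otau(j)},\,-\ov{\eta_{j,\otau(j)}}$; the index bookkeeping (the correspondence between the marked points $e'_\ell$ on the unit circle and the labelling of zones, and the restriction to $j$ odd) is exactly the one already set up in Theorem~\ref{thm:temps transvers}(3) together with the description of $\otau$ given before Definition~\ref{def:invariant comb}, adapted to the two hemicycles, and in the case $m=0$ the range is $\{1,\dots,k-1\}$ because two of the $2k$ ends are consumed by the real homoclinic loop.

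Finally, (3): when $m=0$, the real axis itself is a homoclinic loop of $X_\eps$ through infinity (both ends of $\R$ go to the pole); parametrizing it by the real time $t$, one full traversal from $-\infty$ back to $-\infty$ (equivalently, the time between the two passages at $\infty$) is a well-defined positive real number, the period $\eta_\R\in\R^+$, and positivity is immediate since $P_\eps>0$ on the relevant component of $\R$ for the $+$ normalization. The main obstacle I anticipate is not the symmetry argument itself, which is essentially forced, but the careful matching of indices: making precise, in the $m=0$ case and in the general case, which zones are $\sigma$-fixed versus $\sigma$-exchanged in terms of $\otau$ and the marked points $e_j$, and checking that the parity condition "$j$ odd" is consistent with the conventions of Theorem~\ref{thm:temps transvers}(3); this is bookkeeping rather than a genuine difficulty, but it is where a proof could go wrong.
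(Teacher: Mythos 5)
Your proof is correct and follows essentially the same route as the paper: the paper states this proposition without a detailed argument and supplies the substance later in Lemma~\ref{lem:module}, whose proof uses exactly your two ingredients — the conjugation symmetry $z\mapsto\ov z$ acting as $t\mapsto\ov t$ in the rectifying coordinate (giving $\eta\in i\R^+$ for invariant zones and $\eta\mapsto-\ov\eta$ for exchanged pairs, the paper's ``direct computation''), and the count of the $m-1$ bounded real intervals cut out by the $m$ real singular points. Your write-up is a correctly fleshed-out version of that argument, with the bookkeeping caveats you flag being genuine but harmless.
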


\begin{definition}
The analytic part of the invariant is the vector $\eta$, whose coordinates are the transversal times and the period of the $\R$-axis if applicable. The order of the coordinates is described later in Definition~\ref{def:order}. \end{definition}

\section{The generic strata of $\mathcal{P}_{\R,k+1}$}\label{sec:generic_strata}

We now specialize to vector fields of $\mathcal{P}_{\R,k+1}$. The main difference with $\mathcal{P}_{\Cc,k+1}$ is that the real axis is always invariant. And when there are no real singular points (an open condition), it is a homoclinic loop through $\infty$. 

Hence there are two types of generic strata in $\mathcal{P}_{\R,k+1}$, depending on the number $m$ of singular points on the real axis:
\begin{itemize} 
\item Strata of generic vector fields with singular points (of node
type) on the real axis ($m>0$). These vector fields are also generic inside $\mathcal{P}_{\Cc,k+1}$.
\item Strata of generic vector fields with $m=0$. Note that this case only occurs when $k$ is odd. Then the real axis is a homoclinic loop of $\infty$, and  such a generic vector field is not generic in $\mathcal{P}_{\Cc,k+1}$.
In each hemicycle there are $\frac{k+1}2$ singular points and $k-1$ separatrices. In the generic cases the singular points are linked as a planar tree graph with trajectories, and the tree is linked to the separatrices. This can be done in $C(\frac{k-1}2)$ ways as determined in Proposition~\ref{prop:tree-graph}. \end{itemize} 

\subsection{Number of generic strata}\label{sec:generic strata}

\begin{theorem}\label{thm:number-strata} The number $D(k)$ of generic strata of vector fields in $\mathcal P_{\R,k+1}$ is
$$D(k) =  2\binom{k-1}{\lfloor \frac{k-1}2\rfloor}= \begin{cases} \binom{k}{k/2},& k\:\text{even},\\
2\binom{k-1}{(k-1)/2}, & k\:\text{odd}.\end{cases}$$
For $k$ odd, the number of generic strata with no real singular points is $C\left(\frac{k-1}2\right)$, the $\frac{k-1}2$-th Catalan number.
\end{theorem}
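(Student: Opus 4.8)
The plan is to count the generic strata by first classifying them according to the number $m$ of real singular points, which has the same parity as $k+1$ (since nonreal roots of $P_\eps$ come in conjugate pairs, $m \equiv k+1 \pmod 2$), and then to count the admissible combinatorial invariants $\otau$ for each value of $m$ using the description in Definition~\ref{def:invariant comb} and the constraints listed just before it. When $m>0$, a stratum is determined by a noncrossing involution $\otau$ of $\{1,\dots,k\}$ obeying: links do not cross, a point $e_j$ is linked to $e_{j'}$ only if $jj'>0$ with $j-j'$ odd (a \emph{same-hemicycle} link) or $j'=-j$ (an \emph{equatorial} link), and the equatorial links together with the symmetry force the same-hemicycle links in the upper half to mirror those in the lower half. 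So such a $\otau$ is equivalent to the following data: a subset $E\subseteq\{1,\dots,k\}$ of indices that get an equatorial link (these must be the indices \emph{not} used by same-hemicycle links, and by the parity/noncrossing constraints $E$ is determined once we fix the same-hemicycle matching), together with a noncrossing partial matching of $\{1,\dots,k\}\setminus E$ into pairs of opposite parity. When $m=0$ (only possible for $k$ odd), there are no equatorial links and $\otau$ is a fixed-point-free noncrossing involution of $\{1,\dots,k-1\}$, i.e. a noncrossing perfect matching of $k-1$ points — but since $k-1$ points in a half-disk with the opposite-parity constraint is exactly the tree-graph counting of Proposition~\ref{prop:tree-graph} for $\frac{k-1}{2}$ leaves, this count is the Catalan number $C\!\left(\tfrac{k-1}{2}\right)$, giving the last sentence of the theorem directly.

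Next I would carry out the main count. The cleanest route is to encode a combinatorial invariant with real singular points as a \emph{lattice path} exactly as in Corollary~\ref{rem:alternative}(1): the full complex invariant corresponds to a Dyck path of semilength $k$, and the real symmetry means the path is symmetric under the half-turn $(x,y)\mapsto(2k-x,-y)$. An equatorial link $e_j\leftrightarrow e_{-j}$ becomes a step of the path that crosses the horizontal midline $x=k$; the symmetry condition on a Dyck path of semilength $k$ is precisely the condition that the portion of the path on $[0,k]$ be an arbitrary lattice path from $(0,0)$ with $k$ steps $(1,\pm1)$ staying in $y\ge 0$ whose endpoint is at height equal to the number of equatorial links. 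The number of lattice paths of length $k$ from $(0,0)$ with steps $(1,\pm1)$ staying weakly above the $x$-axis is the "ballot"/central-binomial count $\binom{k}{\lfloor k/2\rfloor}$. Summing over the two symmetry types — the case $m>0$ (path from the left half doing whatever it wants above the axis) and, for $k$ odd, the extra $m=0$ family accounted separately — and matching the boundary conditions gives the factor $2\binom{k-1}{\lfloor(k-1)/2\rfloor}$. I would present the bijection between half-paths of length $k-1$ and generic strata and then invoke the standard identity $\#\{\text{length-}n\text{ paths from }0,\text{ steps }\pm1,\text{ staying }\ge 0\} = \binom{n}{\lfloor n/2\rfloor}$, with a short self-contained proof of that identity via the reflection principle (the number of such paths ending at height $h$ is $\binom{n}{(n-h)/2}-\binom{n}{(n-h)/2-1}$, and summing a telescoping sum over $h$ of the right parity collapses to a single central binomial coefficient).

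Finally I would assemble the case split. For $k$ even, $m$ is always odd and positive, so there is only the $m>0$ family, and the count is $\binom{k}{k/2}$; I would double-check this matches $2\binom{k-1}{(k-1)/2}$ — it does not literally, so the right bookkeeping is that for $k$ even the half-path has length $k-1$ (odd), giving $\binom{k-1}{(k-1)/2}\cdot$(something), and reconciling the two closed forms $2\binom{k-1}{\lfloor(k-1)/2\rfloor}$ and $\binom{k}{k/2}$ via Pascal's rule $\binom{k}{k/2}=\binom{k-1}{k/2}+\binom{k-1}{k/2-1}=2\binom{k-1}{(k-1)/2}$ when $k$ is even (both terms equal). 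For $k$ odd, both $m>0$ and $m=0$ occur; the $m=0$ contribution $C\!\left(\tfrac{k-1}{2}\right)$ is already inside the total $2\binom{k-1}{(k-1)/2}$, and I would verify this by checking that the number of symmetric Dyck paths of semilength $k$ with at least one equatorial crossing equals $2\binom{k-1}{(k-1)/2} - C\!\left(\tfrac{k-1}{2}\right)$, using that Catalan-many half-paths of length $k-1$ return to height $0$. The main obstacle I anticipate is precisely this last reconciliation: making the bijection between strata and half-length lattice paths airtight across the three constraint bullets (especially the noncrossing-plus-parity interaction when equatorial and same-hemicycle links coexist), and handling the slightly exceptional identification in the $m=0$, $k$ odd case where "$e_k$ linked to $e_j$" and "$e_1$ linked to $e_j$" give the same stratum — one must be careful not to double-count there. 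Once the bijection is pinned down, the binomial identities are routine.
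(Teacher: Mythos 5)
Your plan is correct in substance, but it takes a genuinely different route from the paper's proof of this theorem. The paper proves the recurrence $D(k+1)=2D(k)$ for $k$ even and $D(k+1)=2\bigl(D(k)-C(\tfrac{k-1}{2})\bigr)$ for $k$ odd by a geometric surgery: from each $k$-stratum with a real singular point one builds exactly two $(k+1)$-strata (either adding a new leftmost real zero, or splitting the leftmost real zero into a complex-conjugate pair), and one checks that this construction hits every $(k+1)$-stratum exactly once; realizability of every configuration then comes for free from the construction. Your approach --- stratify by $m$, encode the symmetric combinatorial invariant by the half of a symmetric Dyck path, and count ballot paths by reflection --- is essentially the paper's \emph{second} count, the one given in the corollary following the theorem via dispersed Dyck paths: your ballot half-paths of length $k$ ending at height $m-1$ correspond to their dispersed Dyck paths with $m-1$ flat steps at height $0$, and both families have cardinality $\binom{k}{\lfloor k/2\rfloor}$. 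What your route buys is a direct closed form without induction; what the paper's induction buys is a simultaneous proof that every admissible combinatorial type is realized by a stratum and that distinct types give distinct strata --- a point your plan takes for granted and which otherwise rests on the realization theorem of Section~\ref{sec:real} (for $m>0$ one can alternatively argue via Douady--Estrada--Sentenac that a complex generic stratum meets $\R^k$ exactly when its involution is symmetric, by choosing a symmetric analytic invariant and invoking uniqueness). Two small corrections: the symmetry of the Dyck path induced by $j\mapsto -j$ is the reflection $(x,y)\mapsto(2k-x,y)$, i.e.\ reverse the word and swap up/down steps --- no nontrivial Dyck path is invariant under the half-turn $(x,y)\mapsto(2k-x,-y)$ you wrote; and for $k$ even the half-path has length $k$, not $k-1$, though your reconciliation $\binom{k}{k/2}=2\binom{k-1}{k/2-1}=2\binom{k-1}{\lfloor(k-1)/2\rfloor}$ via Pascal's rule is exactly right.
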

\begin{proof} 
The result follows from this recurrence formula that will be proved geometrically:
$$D(k+1) = \begin{cases} 2D(k),& k\:\text{even},\\
2\left(D(k) - C\left(\frac{k-1}2\right)\right), & k\:\text{odd}.\end{cases}$$

\noindent(We will see that for $k$ odd, $C\left(\frac{k-1}2\right)$ is the number of strata with no real singular points.) 
\medskip

Since we consider generic strata, the vector fields have $k+1$ simple singular points and the real axis is invariant. Let $m$ be the number of singular points on the real axis. Then $k+1-m=2\ell$ is even and there are $\ell$ singular points in each hemicycle. \medskip

\medskip

\noindent{\bf Going from $k$ to $k+1$.} Let us introduce the term $k$-stratum to specify the value of $k$ for a given stratum.  Also the marked points depend on $k$, so let us define
\begin{equation} e_{j,k} =
      \exp\left({i\pi{2j-\sgn(j)\over 2k}}\right), \qquad j\in\{\pm1,\ldots,\pm k\}.\label{def:e,k}
\end{equation}
      From any generic $k$-stratum with $m\neq0$, we construct two different generic $(k+1)$-strata. All the existing generic $(k+1)$-strata 
are obtained in this way, and all the $(k+1)$-strata obtained in the construction are distinct. 
Let us now describe the construction. Let us start with a generic $k$-stratum with $m\neq0$. It has $m$ singular points on the real axis and symmetric branches (possibly void) starting from the real singular points. Hence each real singular point is the root of a tree (possibly with one vertex) in the upper hemicycle, and a root of the symmetric image of the tree in the lower hemicycle. 

Let $z_m< z_{m-1} < \dots < z_1$ be the real singular points for the $k$-stratum. The construction consists in adding a singular point $z_s$ on the real axis to the left of $z_m$ and two extra separatrices on both sides of the negative real axis. In practice, this is the $k+1$-(stratum) of  $ P(z) + \eta z^{k+2}$ with $\eta>0$ small. 
Before adding $z_s$, the negative infinite part of the real axis is a separatrix landing in $z_m$. After adding $z_s$, this separatrix splits into the two new separatrices which again land in $z_m$ and the negative infinite part of the real axis becomes a separatrix landing in $z_s$. 
This gives the first new generic $(k+1)$-stratum (see Figures~\ref{Induction1} and \ref{Induction2}). 

As for the second generic $(k+1)$-stratum, we let $z_s$ coallesce with $z_m$ and the two points move together as complex conjugates outside the real axis. For instance, $z_m$ moves in the upper hemicycle, and $z_s$ in the lower one. Then $z_m$ brings with it its attached tree of singular points, which remain attached to their separatrices.  The new separatrices created in the construction are attached to $z_m$ and $z_s$ respectively. All the other separatrices keep the same attachment. Two cases now occur. If $m=1$, then the new vector field has no real singular point and the real axis is a homoclinic loop (Figure~\ref{Induction2}). Otherwise, the negative infinite part of the real axis becomes a separatrix landing at $z_{m-1}$ (Figure~\ref{Induction1}).

\begin{figure}\begin{center} \includegraphics[width=12cm]{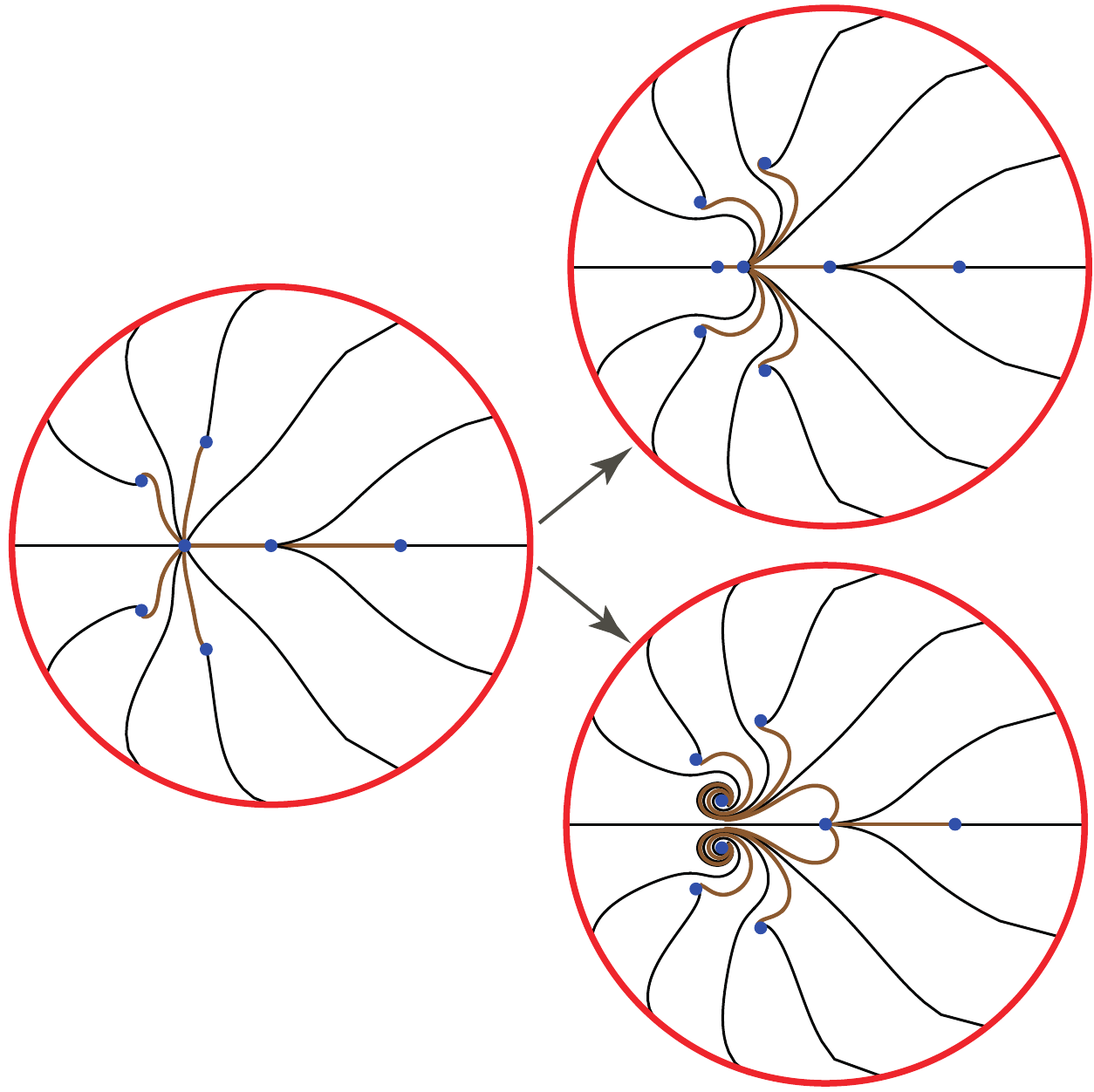}\caption{The two $(k+1)$-strata produced from a $k$-stratum when $m>1$.}\label{Induction1}\end{center}\end{figure}
\begin{figure}\begin{center} \includegraphics[width=12cm]{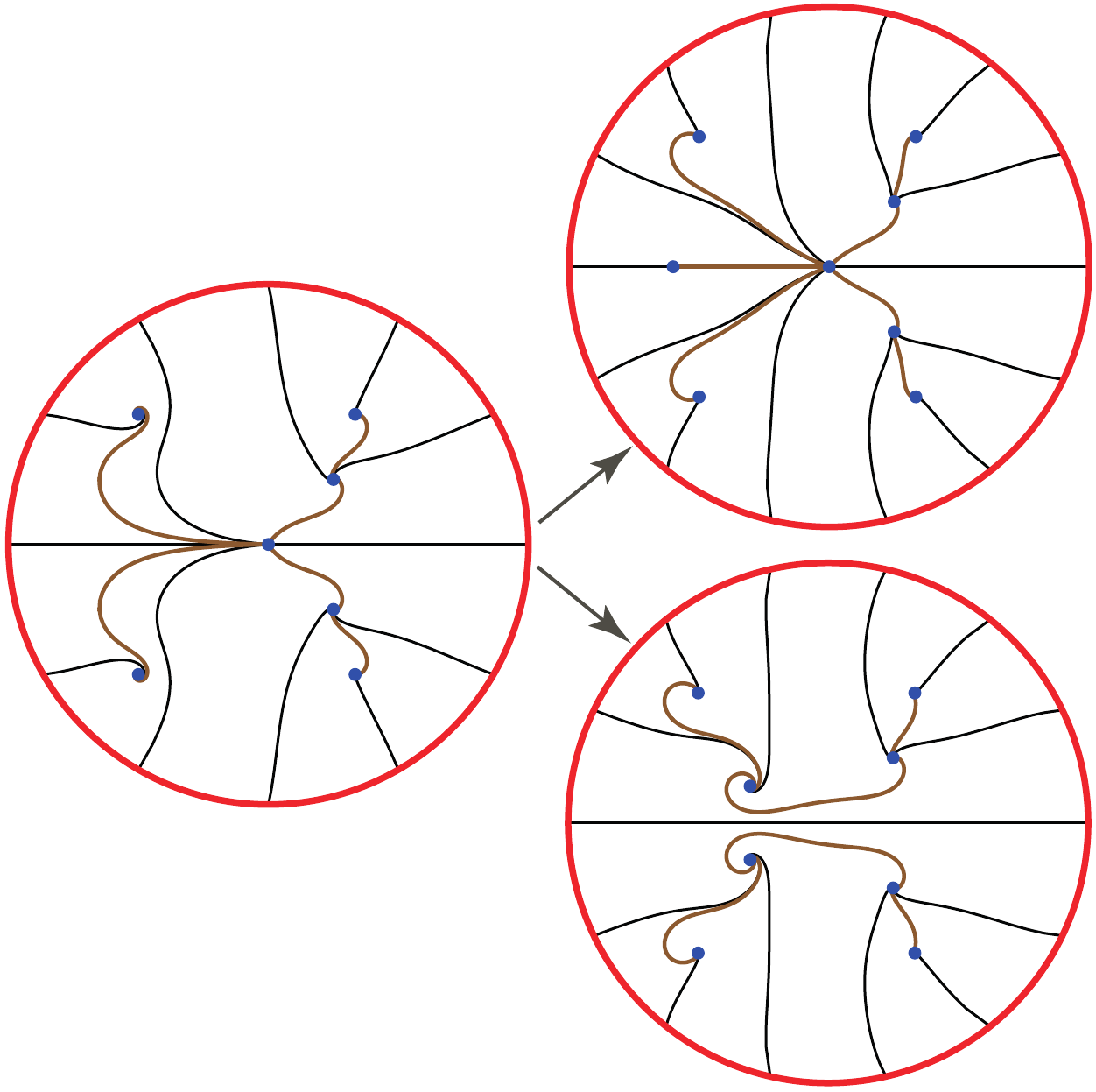}\caption{The two $k+1$-strata produced from a $k$-stratum when $m=1$.}\label{Induction2}\end{center}\end{figure}

\medskip\noindent{\bf Injectivity of the construction.} 
%{\color{red}We will add an index $k'\in\{k,k+1\}$ to the marked points of \eqref{def:e}:
%$$e_{j,k'} = \exp\left({i\pi{2j-\sgn(j)\over 2k'}}\right), \qquad j\in\{\pm1,\ldots,\pm k'\}.$$}

When adding the extra singular point $z_s$, the old marked points $e_{j,k}$ are moved to $e_{j,k+1}$ and two new marked points are added, namely $e_{\pm (k+1), k+1}$. If $z_s$ is on the real axis the new combinatorial invariant associates $e_{k+1, k+1}$ with $e_{- (k+1), k+1}$ and the rest of the linking is the same as before. 

If $z_m$ and $z_s$ are complex conjugates, and $m>1$, then the $k$-pairing for the $k$-stratum associates some $e_{j,k}$ to $e_{-j,k}$ with a link cutting the segment $[z_m,z_{m-1}]$. Then for the $(k+1)$-stratum, $e_{k+1,k+1}$ will be linked to $ e_{j,k+1}$ and the rest of the pairing  is identical.
If $z_m$ and $z_s$ are complex conjugates, and $m=1$, then the new pairing for the $(k+1)$-stratum is the same as before for all $e_{j,k+1}$, $j=1, \dots k$, and as discussed above, pairing $e_{k+1,k+1}$ is irrelevant. (See also Table~\ref{table1}.)

\begin{table} \begin{center} \begin{tabular} {|| l | c| l||}
\hline
\qquad Condition & $k$-pairing &$\qquad\qquad\quad (k+1)$-pairing\\
\hline
$z_s< z_m$ & $\tau_k$& $\begin{cases}  
\tau_{k+1}(j)=\tau_k(j), &1 \leq j \leq k\\
\tau_{k+1}(k+1)=- (k+1),\end{cases}$\\
\hline
$z_m=\ov{z}_s\in \Hh$, $m>1$& $\begin{cases} \tau_k(j) =-j\\  \tau_k(j') \neq-j',&j'>j\end{cases}$  &  $\begin{cases} \tau_{k+1}(j)=k+1\\ \tau_{k+1}(\ell)= \tau_k(\ell) , & \ell\neq j, k+1\end{cases}$ \\

\hline
$z_m=\ov{z}_s\in \Hh$, $m=1$&$\tau_k$&$\begin{cases} \tau_{k+1}(j)=\tau_k(j), &1 \leq j \leq k\\
\tau_{k+1}(k+1)\:\text{irrelevant}\end{cases}$\\
\hline \end{tabular}\end{center}\caption{Changes in the pairing when passing from a $k$-stratum with pairing $\tau_k$ to a $(k+1)$-stratum with pairing $\tau_{k+1}$.}\label{table1} \end{table}

Any pairing of a $(k+1)$-stratum can be uniquely obtained by this method from a pairing of a $k$-stratum with at least one real singular point and either adding $z_s$ on the real axis or as a complex conjugate of $z_m$. Indeed, if $m>0$ for the $(k+1)$-stratum, then its tree graph is symmetric with respect to the real axis. If this tree graph has one end on the leftmost side of the real axis then it can only come from a $k$-stratum by adding $z_s$ on the real axis. Otherwise, it comes from a $k$-stratum with $z_m$ and $z_s$ complex conjugates. 
\end{proof} 

\begin{corollary} For any value of $k$, half of the strata with real singular points have the leftmost singular point with no attached branches to it in the hemicycles.
Similarly, half of the strata with real singular points have the rightmost singular point with no branches in the hemicycles.\end{corollary}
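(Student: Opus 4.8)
The plan is to leverage the injective-and-surjective construction from the proof of Theorem~\ref{thm:number-strata} together with the explicit description of the pairings in Table~\ref{table1}. Recall that the strata with $m>0$ real singular points are exactly the ones whose planar tree graph is symmetric with respect to $\R$, and the relevant data is the non-crossing involution $\otau$ on $\{1,\dots,k\}$. Saying that ``the leftmost singular point has no attached branches in the hemicycles'' is a purely combinatorial condition on $\otau$: it means that the two marked points adjacent to the leftmost singular point on either side of the negative real axis, namely $e_{k,k}$ and $e_{-k,k}$, are linked directly to each other, i.e. $\otau(k)=k$ (equivalently $\tau(k)=-k$). Dually, the rightmost singular point has no attached branches iff $e_{1,k}$ and $e_{-1,k}$ are linked, i.e. $\otau(1)=1$ (equivalently $\tau(1)=-1$).

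So the corollary reduces to the combinatorial count: among the $2\binom{k-1}{\lfloor (k-1)/2\rfloor}$ admissible pairings $\otau$ (including, for $k$ odd, those coming from the $m=0$ strata, for which there is no real singular point and the statement is vacuous but the count must still be reconciled), exactly half satisfy $\otau(k)=k$. First I would observe that, in the construction of Theorem~\ref{thm:number-strata}, the first of the two $(k+1)$-strata produced from a $k$-stratum (the one where $z_s$ is added on the real axis) always has $\tau_{k+1}(k+1)=-(k+1)$, hence its leftmost real singular point $z_s$ carries no branches; while the second $(k+1)$-stratum (where $z_m$ and $z_s$ become complex conjugates) has $\tau_{k+1}(k+1)=k+1$ linked to some $e_{j,k+1}$ with $j\neq k+1$ (or, when $m=1$, has no real singular point at all), hence its leftmost real singular point does carry branches. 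Thus the construction pairs up the $(k+1)$-strata with $m>0$ so that in each pair exactly one has the leftmost-no-branches property — except that the $m=0$ strata for $k+1$ odd are lumped in on the ``branches'' side and must be subtracted off on both sides of the tally, which they do symmetrically.

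Concretely, I would set $E(k)$ to be the number of generic strata of $\mathcal P_{\R,k+1}$ with at least one real singular point, so $E(k)=D(k)$ for $k$ even and $E(k)=D(k)-C(\frac{k-1}2)$ for $k$ odd, and let $L(k)$ be the number of such strata whose leftmost singular point has no attached branches. The construction gives $L(k+1)$ equals the number of $k$-strata with $m\neq 0$, i.e. $L(k+1)=E(k)$; and it also gives $E(k+1)=2E(k)$ for $k$ even and $E(k+1)=2E(k)-2C(\frac{k-1}2)$... — here one must be a little careful, since the ``second'' $(k+1)$-stratum coming from a $k$-stratum with $m=1$ has $m=0$ and hence is not counted in $E(k+1)$. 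I would therefore recount: the $(k+1)$-strata with $m>0$ are exactly (a) all ``first'' strata (one per $k$-stratum with $m\neq 0$), contributing $E(k)$, all with leftmost-no-branches, plus (b) all ``second'' strata coming from $k$-strata with $m>1$, contributing the number of such $k$-strata, namely $E(k)-L(k)$ (the $k$-strata with exactly one real singular point are precisely counted by... — no: $m=1$ is one specific value, so I instead let $M_1(k)$ denote the number of $k$-strata with $m=1$ and note $m=1$ forces the leftmost $=$ rightmost singular point, so these need separate bookkeeping). The cleanest route, which I would adopt, is to prove directly the combinatorial fact: the map $\otau\mapsto\otau'$ that deletes the pair through $[z_m,z_{m-1}]$ and re-roots is a bijection realizing an involution-free perfect matching between $\{\otau:\otau(k)=k\}$ and $\{\otau:\otau(k)\neq k\}$ on the set of admissible pairings of a fixed $k$ with $m\geq 2$, while for $m=1$ the single real point is simultaneously leftmost and rightmost and one checks by the scaling symmetry $z\mapsto -z$ that these split evenly too.

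The main obstacle is precisely this bookkeeping around the $m=1$ and $m=0$ cases: the recursion for $D(k)$ mixes strata of different types, and one must verify that the ``leftmost-no-branches'' subset does not get distorted by the subtraction of the Catalan term. I expect the cleanest proof avoids the recursion entirely and instead uses the left-right reflection symmetry $z\mapsto -z$ of the whole family $\mathcal P_{\R,k+1}$ (available after an affine normalization, cf. Remark~\ref{rem:scaling}): this reflection is an involution on the set of generic strata with $m>0$ that does not in general fix the ``leftmost-no-branches'' set, so it is not immediately enough, but composed with the combinatorial deletion-and-rerooting bijection above it should yield a fixed-point-free involution of $\{\otau:\otau(k)=k\}\triangle\{\otau:\otau(k)=k\}^c$ within the $m>0$ strata, proving the two halves are equinumerous. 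The second assertion of the corollary then follows from the first by applying the reflection $z\mapsto -z$, which swaps ``leftmost'' with ``rightmost'' while preserving the number of strata with real singular points.
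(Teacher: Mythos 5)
You have correctly isolated the two facts that drive the paper's own argument: in the inductive construction of Theorem~\ref{thm:number-strata}, the first $(k+1)$-stratum produced from a $k$-stratum (adding $z_s$ on the real axis) always has $\tau_{k+1}(k+1)=-(k+1)$ and hence a branch-free leftmost point, while the second one either attaches a branch to its new leftmost real point or has $m=0$; and the rightmost claim follows from the leftmost one via $z\mapsto -z$. This is exactly the paper's route. But you never close the count, and both escape routes you sketch rest on false statements. The proposed fixed-point-free matching between $\{\otau:\otau(k)=k\}$ and its complement \emph{inside the strata with real singular points} cannot exist: for $k=3$ there are three such strata (one with $m=4$, two with $m=2$), of which two have a branch-free leftmost point, so the two classes have sizes $2$ and $1$. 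Likewise, the claim that the $m=1$ strata ``split evenly'' is wrong: for $k\geq 2$ the unique real point of an $m=1$ stratum carries all the non-real vertices as branches, so \emph{none} of these strata has the property.

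What resolves the bookkeeping is the reading of ``half'': the assertion is that the strata with a branch-free leftmost real point number $D(k)/2$, i.e.\ half of \emph{all} generic strata, the $m=0$ strata (for $k$ odd) being counted among those without the property. This reading is forced already by $k=1$ and $k=3$, where the number of strata with real singular points is odd. With it, your own observations finish the proof in one line: the leftmost-branch-free $(k+1)$-strata are exactly the first-type ones, one per $k$-stratum with $m\neq 0$, so (in your notation) $L(k+1)=E(k)$; and the recurrence established in the proof of Theorem~\ref{thm:number-strata} is precisely $D(k+1)=2E(k)$, whence $L(k+1)=D(k+1)/2$ with no separate treatment of $m=0$ or $m=1$. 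Together with the base cases $k=1,2$ and the reflection $z\mapsto -z$ for the rightmost statement, this is the paper's (very short) proof.
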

\begin{proof} 
Indeed this is the case for $k=1$ and $k=2$. For any higher $k$ and for the leftmost point, it follows from the construction in the proof of Theorem~\ref{thm:number-strata}. The case of the rightmost case follows by symmetry using $z\mapsto -z$.  \end{proof} 

\begin{definition} A \emph{ dispersed Dyck path of length $n$}  is a path in $\N^2$ from $(0,0)$ to $(n,0)$ with steps $(1,1)$, $(1,-1)$ and $(1,0)$ and with no steps $(1,0)$ at positive height. \end{definition}

The number of dispersed Dyck paths of length $n$ is well known in the literature. We provide a proof for the sake of completeness. 

\begin{proposition} The number of dispersed Dyck paths of length $n$ is given by $\binom{n}{\lfloor\frac{n}2\rfloor}$.
\end{proposition}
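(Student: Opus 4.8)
The plan is to establish a bijection between dispersed Dyck paths of length $n$ and a family of lattice paths (or subsets) that is visibly counted by $\binom{n}{\lfloor n/2\rfloor}$. The cleanest route I would take is to split according to the parity of $n$ and reduce to ordinary Dyck-type paths, whose enumeration (Catalan numbers and their sums) is classical, but in fact there is a more direct argument that handles both parities uniformly, which I sketch below.

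\smallskip

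First I would observe that a dispersed Dyck path of length $n$ is determined by choosing which of the $n$ steps are flat steps $(1,0)$ — say there are $2j$ or $2j+1$... actually the flat steps all occur at height $0$, so they can occur anywhere the path is on the axis, and the non-flat steps must themselves form a concatenation of (possibly empty) Dyck paths sitting between consecutive flat steps. Rather than organize this by blocks, I would use the standard trick: a dispersed Dyck path of length $n$ corresponds bijectively to a pair consisting of a choice of how many up-steps there are among the non-flat steps, and a Dyck path on those. Concretely, if the path has $2i$ non-flat steps (hence $i$ up and $i$ down) and $n-2i$ flat steps, the non-flat steps form a Dyck path of semilength $i$ (there are $C(i)$ of them) and then we must interleave the $n-2i$ flat steps among the places where the Dyck path touches the axis; the number of such interleavings is the number of ways to distribute $n-2i$ indistinguishable flat steps into the "valleys plus endpoints" of the Dyck path, and summing $C(i)$ times that count over $i$ is exactly the Motzkin-like sum $\sum_i \binom{n}{2i} C(i)$, which is the well-known formula $\sum_{i} \binom{n}{2i}C(i) = \binom{n}{\lfloor n/2\rfloor}$.

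\smallskip

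To make this self-contained I would prove the identity $\sum_{i=0}^{\lfloor n/2 \rfloor} \binom{n}{2i} C(i) = \binom{n}{\lfloor n/2\rfloor}$ by a direct bijective argument on lattice paths: a path of length $n$ with steps $(1,1)$ and $(1,-1)$ that is allowed to go below the axis but is required to end at the lowest point it ever reaches can be "folded" at the axis to produce a dispersed Dyck path, and such free paths ending at their minimum are counted by the cycle lemma / ballot-type reasoning to be $\binom{n}{\lfloor n/2\rfloor}$ (they are the paths from $(0,0)$ to $(n, h)$ with $h = -(n \bmod 2)$ or $h=0$ that are "prefix-minimal at the end", and a rotation argument shows exactly a $\frac{1}{\cdot}$ fraction survive, landing on the central binomial coefficient). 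Alternatively, and more simply, I would just invoke the reflection/folding bijection: send each dispersed Dyck path to the $\pm1$ word obtained by replacing each flat step by a down-step; this lands in the set of $\pm1$ words of length $n$ whose every proper prefix has more up-steps than down-steps only weakly... — the precise target is the set of words with $\lceil n/2\rceil$ up-steps (or $\lfloor n/2 \rfloor$), and a short check shows the map is a bijection onto all $\binom{n}{\lfloor n/2\rfloor}$ of them.

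\smallskip

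The main obstacle, and the step I would spend the most care on, is getting the folding bijection exactly right near the axis: a flat step and a down-step-then-up-step both "do nothing" to the height over two units, so one must be careful that the inverse map (unfolding a $\pm1$ word back into up/down/flat) is well-defined and single-valued — this is where the "no flat step at positive height" hypothesis is essential and must be used explicitly. I would state the bijection as: read the $\pm1$ word left to right, and whenever a down-step would take the height strictly below $0$... no — rather, whenever the running partial sum is $0$ and the next step is $-1$, reinterpret that $-1$ as a flat step. Verifying that this recovers precisely the dispersed Dyck paths, and that the image is all words with exactly $\lfloor n/2\rfloor$ up-steps, is the crux; everything else is bookkeeping.
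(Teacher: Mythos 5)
Your write-up is not yet a proof: it offers three partially developed routes, and each one as stated contains an error or leaves the decisive step unverified. The most serious problem is the first route. The count of dispersed Dyck paths with $2i$ non-flat steps is \emph{not} $\binom{n}{2i}C(i)$: you cannot freely choose which $2i$ of the $n$ positions carry the up/down steps, because the flat steps are only allowed where the up/down subpath is at height $0$. Dropping that constraint gives Motzkin paths, and indeed $\sum_i \binom{n}{2i}C(i)$ is the Motzkin number $M_n$, not $\binom{n}{\lfloor n/2\rfloor}$ (for $n=4$ the sum is $1+6+2=9$ while $\binom{4}{2}=6$), so the identity you call ``well-known'' is false. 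The second problem is in the folding route, which is the right idea but is pinned to the wrong target set. The map ``replace each flat step by a down-step'' sends a dispersed path with $i$ up-steps to a word with $i$ up-steps, and $i$ varies from $0$ to $\lfloor n/2\rfloor$; so the image is certainly not ``all words with exactly $\lfloor n/2\rfloor$ up-steps.'' The correct image (which you mention once and then abandon) is the set of $\pm1$-paths ending at their weak minimum; likewise your sketched inverse (``when the partial sum is $0$ and the next step is $-1$, reinterpret it as flat'') applied to an arbitrary balanced word need not return to height $0$, so it is not an inverse on the set you claim. You yourself flag this verification as ``the crux,'' and it is exactly the part that is missing and, as formulated, cannot succeed.

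For comparison, the paper avoids both pitfalls by making the folding alternate: the $i$-th flat step becomes a down-step for $i$ odd and an up-step for $i$ even, and the arches between the $(2j-1)$-st and $2j$-th flat steps are reflected to lie below the axis at height $-1$. The image is then visibly \emph{all} $\pm1$-paths from $(0,0)$ to $(n,0)$ when $n$ is even and to $(n,-1)$ when $n$ is odd, a set counted by $\binom{n}{\lfloor n/2\rfloor}$ with no ballot or cycle-lemma argument needed, and the inverse (flatten the down-steps leaving level $0$ and the up-steps returning to it, reflect the portions below $y=-1$) is immediate. If you want to salvage your approach, either adopt this alternating fold, or keep your single-direction fold but prove carefully that its image is the set of paths ending at their weak minimum and then count that set by reversal and negation onto nonnegative paths.
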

\begin{proof} Let $N=\begin{cases} 0,& n\: \text{even},\\
-1,&n\: \text{odd}.\end{cases}$. Let us show that the number of dispersed Dyck paths of length $n$ is equal to the number of paths of length $n$ from $(0,0)$ to $(n,N)$ with steps $(1,1)$ and $(1,-1)$. This latter number is obviously equal to the number of choices for the steps $(1,1)$, namely $\binom{n}{\lfloor\frac{n}2\rfloor}$. The bijection is constructed as follows. 

Consider a dispersed Dyck path. Let the steps $(1,0)$ occur between $(n_i,0)$ and $(n_i +1,0)$, $i= 1, \dots, \ell$. A step $(1,0)$ occuring between $(n_i,0)$ and $(n_i +1,0)$ is replaced by a step $(1,-1)$ (resp. $(1,1)$) if $i$ is odd (resp. even). 
Moreover, for $i$ odd, the part $\gamma_i$ of the path between $(n_i +1,0)$ and $(n_{i+1},0)$ is replaced by $-\gamma_i -(0,1)$.

The inverse map is constructed as follows. Consider a  path of length $n$ from $(0,0)$ to $(n,N)$ with steps $(1,1)$ and $(1,-1)$. 
We change each descending step from level $0$, namely a step from some $(n_i,0)$ to $(n_i+1,-1)$, to a step from $(n_i,0)$ to $(n_i+1,0)$. Each ascending step to level $0$ from some $(n_i,-1)$ to $(n_i+1,0)$ is changed to a step from $(n_i,0)$ to $(n_i+1,0)$. And each portion $\delta_i$ of the path between $(n_i+1,-1)$ and $(n_j,-1)$, $n_j>n_i$ located below the line $y=-1$ is changed to  $-\delta_i-(0,1)$. \end{proof}

\begin{corollary} 
\begin{enumerate}
\item For $k$ even, the generic strata in $\mathcal{P}_{\R,k+1}$ are in bijection with the set of dispersed Dyck paths of length $k$, whose number is $Dd(k)= \binom{k}{k/2}$.
\item For $k$ odd, the strata generic in both $\mathcal{P}_{\R,k+1}$ and $\mathcal{P}_{\Cc,k+1}$ (i.e. at least one real fixed point) are in bijection with the set of dispersed Dyck paths of length $k$, whose number is $Dd(k)= \binom{k}{\lfloor k/2\rfloor}$. The number of generic strata with no real singular points is $C\left(\frac{k-1}2\right)$.
Hence $$D(k)= Dd(k) + C\left(\frac{k-1}2\right)= 2\binom{k-1}{(k-1)/2}.$$\end{enumerate}
\end{corollary}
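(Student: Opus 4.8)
The plan is to establish the two bijections and then read off the counts, since the two enumeration results needed (the Catalan number $C(\frac{k-1}{2})$ and the central binomial coefficient $\binom{k}{\lfloor k/2\rfloor}$) have already been supplied: the former is part (1) of Theorem~\ref{thm:involution} applied in each hemicycle (as noted in Section~\ref{sec:generic_strata}), and the latter is the proposition on dispersed Dyck paths just proved. So the content of this corollary is really the construction of the two bijections, plus the final arithmetic identity.

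For part (1) and the first half of part (2), I would build an explicit bijection between generic strata of $\mathcal{P}_{\R,k+1}$ with at least one real singular point and dispersed Dyck paths of length $k$. The combinatorial invariant of such a stratum is the noncrossing involution $\otau$ of $\{1,\dots,k\}$ from Definition~\ref{def:invariant comb}, subject to the parity/sign constraints listed before that definition: a point $e_j$ ($j>0$) is paired either with some $e_{j'}$, $j'>0$, with $j-j'$ odd, or with its mirror $e_{-j}$ (a fixed point of $\otau$). The idea is to encode this involution as a lattice path of length $k$: reading the indices $1,2,\dots,k$ in order, a fixed point of $\otau$ (a zone symmetric about $\R$, i.e. a real-axis singular point "slot") contributes a horizontal step $(1,0)$, the smaller index of a genuine pair contributes an up-step $(1,1)$, and the larger index contributes a down-step $(1,-1)$. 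The noncrossing condition on $\otau$ is exactly what forces the partial sums to stay $\ge 0$ (a pair $e_j$–$e_{j'}$ nested strictly inside another is matched by the height of the path, and crossings would force a negative height), and the fact that a fixed point sits on the real axis — with everything nested inside it being mirror-symmetric and hence contributing balanced up/down steps around it — forces horizontal steps to occur only at height $0$. Conversely any dispersed Dyck path reconstructs such an involution uniquely. I would check that this map is well-defined and inverse to its obvious candidate, using the parity constraint "$j-j'$ odd for a pair" to confirm consistency (each nested sub-involution spans an even number of indices, matching the return-to-level structure of Dyck subpaths). The count is then $Dd(k)=\binom{k}{\lfloor k/2\rfloor}$ by the preceding proposition.

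For the remaining assertion in part (2), when $k$ is odd I invoke the discussion in Section~\ref{sec:generic_strata}: a generic stratum with $m=0$ has $\frac{k+1}{2}$ singular points and $k-1$ free separatrices in each hemicycle, linked as a planar tree graph attached to its separatrices; by Proposition~\ref{prop:tree-graph} (equivalently part (1) of Theorem~\ref{thm:involution} with $k$ replaced by $\frac{k-1}{2}$) the number of ways to do this in one hemicycle is $C(\frac{k-1}{2})$, and the lower hemicycle is its mirror image, so no extra freedom. Hence the number of strata with no real singular points is $C(\frac{k-1}{2})$. Combining, $D(k)=Dd(k)+C(\frac{k-1}{2})=\binom{k-1}{(k-1)/2}+C(\frac{k-1}{2})$ for $k$ odd, and I finish by the elementary identity $C(n)=\binom{2n}{n}-\binom{2n}{n+1}=\binom{2n}{n}-\binom{2n}{n-1}$, which with $n=\frac{k-1}{2}$ and $2n=k-1$ gives $\binom{k-1}{(k-1)/2}+C(\frac{k-1}{2})=2\binom{k-1}{(k-1)/2}-\binom{k-1}{(k-3)/2}$; one checks this equals $2\binom{k-1}{(k-1)/2}$ precisely when... — actually the cleaner route is to use $Dd(k)=\binom{k}{\lfloor k/2\rfloor}=\binom{k-1}{(k-1)/2}+\binom{k-1}{(k+1)/2}$ (Pascal) for $k$ odd, and to recognize the second term $\binom{k-1}{(k+1)/2}=\binom{k-1}{(k-3)/2}$ together with $C(\frac{k-1}{2})=\binom{k-1}{(k-1)/2}-\binom{k-1}{(k-3)/2}$; adding, the $\binom{k-1}{(k-3)/2}$ terms cancel and $D(k)=2\binom{k-1}{(k-1)/2}$, matching Theorem~\ref{thm:number-strata}.

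The main obstacle I anticipate is proving that the path-encoding map in part (1)/(2) is genuinely a bijection — specifically, verifying in both directions that the noncrossing-plus-symmetry structure of the admissible involutions $\otau$ corresponds exactly to the "no $(1,0)$ step at positive height" condition, and handling the parity constraint "$j-j'$ odd" correctly so that nested blocks have even length and the reconstruction is forced. The arithmetic at the end is routine once one commits to the Pascal-identity bookkeeping; the enumeration inputs are already black-boxed; so the real work is the combinatorial dictionary between real-symmetric noncrossing involutions and dispersed Dyck paths.
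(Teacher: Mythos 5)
Your proposal is correct and follows essentially the same route as the paper: the paper's proof is exactly the encoding you describe (fixed point of $\otau$ $\mapsto$ step $(1,0)$, smaller index of a pair $\mapsto$ $(1,1)$, larger index $\mapsto$ $(1,-1)$), combined with the previously established counts $Dd(k)=\binom{k}{\lfloor k/2\rfloor}$ and $C\bigl(\frac{k-1}2\bigr)$ for the $m=0$ strata. You merely spell out the verification of the bijection and the final binomial identity in more detail than the paper does.
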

\begin{proof} Recall that for even $k$ or for odd $k$ with
at least one real singular point, any generic vector field of $\mathcal{P}_{\R,k+1}$ is generic in $\mathcal{P}_{\Cc,k+1}$. 
Because of the symmetry, to describe a non-crossing involution $\tau$ between the marked points, it suffices to describe the images $\tau(j)$ for $j=1, \dots, k$ as steps of a dispersed Dyck path:
\begin{itemize}
\item The step is $(1,1)$ if $\tau(j) = j'$ with $j'>j$; \item The step is $(1,-1)$ if $\tau(j) = j'$ with $j'<j$; 
\item The step is $(1,0)$ if $\tau(j) = -j$.

\end{itemize}\end{proof} 

\subsection{Description of the strata} 

In this section, we count the number of generic strata with another approach, linked to their geometry. 

\begin{theorem}\label{Prop1} Let $D(k,m)$ be the number of generic strata with exactly $m$ real singular points. 
Then \begin{equation}\label{Dkm}
  D(k,m) =\begin{cases} \displaystyle \sum\limits_{\substack{n_1+\cdots+n_m = \frac{k-m+1}2\\
    n_j\geq 0}} C(n_1) \cdots C(n_m),& m>0,\\[8mm]
    C\bigl( \frac{k-1}2 \bigr), &m=0,\end{cases}
   \end{equation}
where $C(n)$ is the $n$-th Catalan number.
The number $D(k)$ of generic strata is   $$
    D(k) = \begin{cases} \displaystyle \sum_{m \in\{1, 3, \dots, k+1\}} D(k,m),& k\ \text{even},\\ \displaystyle \sum_{m \in\{0, 2, \dots, k+1\}} D(k,m),& k\ \text{odd}.\end{cases}$$
This yields the following combinatorial formulas
$$ D(k)= 
\begin{cases}    \displaystyle \sum_{m \in\{1, 3, \dots, k+1\}}
      \sum_{\substack{n_1 + \cdots + n_m = \frac{k-m+1}2\\
        n_j\geq 0}} C(n_1) \cdots C(n_m)
      = \binom k{k/2},&k\ \text{even},\\
      \displaystyle \sum_{m \in\{2, 4,\dots, k+1\}}
      \sum_{\substack{n_1 + \cdots + n_m = \frac{k-m+1}2\\
        n_j\geq 0}} C(n_1) \cdots 
				\textstyle C(n_m)+C\!\left( \frac{k-1}2 \right)\\
        \hskip19em   = 2\binom{k-1}{(k-1)/2}, &k\ \text{odd}.\end{cases}
  $$
\end{theorem}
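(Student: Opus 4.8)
\textbf{Proof strategy for Theorem~\ref{Prop1}.} The plan is to establish the formula for $D(k,m)$ geometrically, then deduce the summation formulas for $D(k)$ by combining with Theorem~\ref{thm:number-strata}.

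\emph{Step 1: The structure of a stratum with $m$ real singular points.} Fix a generic vector field with $m>0$ real singular points $z_m < z_{m-1} < \cdots < z_1$. By the results of Section~\ref{sec:P_R}, the separatrix graph is symmetric with respect to $\R$, so the entire combinatorial data in the upper half-plane determines the data in the lower half-plane. The key observation is that the $m$ real singular points cut the closed upper hemicycle into $m$ sectors (the region left of $z_m$, the regions between consecutive $z_j$'s, and the region right of $z_1$, but because the leftmost and rightmost ends of the real axis are themselves separatrices landing at $z_m$ and $z_1$, the correct count of ``internal'' regions is $m$). Each such sectoral region carries an independent planar tree graph on the singular points it contains, together with its attachment to the free separatrices bounded within that sector. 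I would argue that the combinatorial invariant of the whole vector field is equivalent to the unordered-but-positionally-ordered tuple of these $m$ local tree graphs, because no trajectory and no link of $\otau$ can cross a real singular point.

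\emph{Step 2: Counting local configurations via Catalan numbers.} If the $j$-th sector (counting from the left, say) contains $n_j$ non-real singular points in the upper half-plane, then by Proposition~\ref{prop:tree-graph} and Theorem~\ref{thm:involution} applied locally, the number of planar tree graphs on those $n_j$ points attached in a non-crossing way to the corresponding free separatrices is the Catalan number $C(n_j)$; the degenerate case $n_j=0$ gives $C(0)=1$, corresponding to an empty sector. Since the total number of non-real singular points in the upper half-plane is $\frac{(k+1)-m}{2} = \frac{k-m+1}{2}$, the constraint is $n_1 + \cdots + n_m = \frac{k-m+1}{2}$ with $n_j\geq 0$, and the local choices are independent, giving $D(k,m) = \sum_{n_1+\cdots+n_m = (k-m+1)/2} C(n_1)\cdots C(n_m)$. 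For $m=0$ (only when $k$ is odd), there is a single hemicycle with no subdivision, carrying $\frac{k+1}{2}$ singular points but only $\frac{k-1}{2}$ free separatrices in it (two separatrices bound the homoclinic loop $\R$), so by the same local count the number of strata is $C\bigl(\frac{k-1}{2}\bigr)$, matching the last clause of Theorem~\ref{thm:number-strata}.

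\emph{Step 3: Summation over $m$ and the closed forms.} Summing $D(k,m)$ over the admissible values of $m$ gives $D(k)$; the parity of $m$ is forced since $k+1-m$ must be even, so $m$ ranges over the odd integers $1,3,\dots,k+1$ when $k$ is even, and over $0,2,\dots,k+1$ when $k$ is odd. The resulting identities
$$\sum_{m\ \text{odd}}\ \sum_{n_1+\cdots+n_m = (k-m+1)/2} C(n_1)\cdots C(n_m) = \binom{k}{k/2}\qquad (k\ \text{even}),$$
and the analogous one for $k$ odd, then follow immediately from Theorem~\ref{thm:number-strata}, which already gives $D(k) = 2\binom{k-1}{\lfloor(k-1)/2\rfloor}$, together with the observation that for $k$ even $2\binom{k-1}{(k-2)/2} = \binom{k}{k/2}$. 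Alternatively one may verify these Catalan convolution identities directly using the generating function $\sum_n C(n)x^n = \frac{1-\sqrt{1-4x}}{2x}$: the inner sums are coefficients of powers of this series, and after summing the geometric-type series in $m$ one recognizes the central binomial coefficients; this gives an independent check not relying on Theorem~\ref{thm:number-strata}.

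\textbf{Main obstacle.} The routine parts are the generating-function manipulations in Step~3 and the local Catalan count in Step~2. The genuine content, and the step I expect to require the most care, is Step~1: justifying rigorously that a generic stratum with $m$ real singular points decomposes as an \emph{ordered} $m$-tuple of independent local combinatorial data, with no cross-sector interaction and no overcounting. One must check that the positions of the sectors along $\R$ are intrinsically ordered (so the $C(n_j)$ are not divided by symmetries), that two distinct tuples always give DES-inequivalent vector fields (using that a DES-equivalence respects the separatrices at $\infty$ and hence the real axis with its induced orientation), and that every admissible tuple is realizable — the last point following from the realization part of Theorem~\ref{thm:involution} applied to build each local tree and then gluing along $\R$, or alternatively from the inductive construction already given in the proof of Theorem~\ref{thm:number-strata}.
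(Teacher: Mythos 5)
Your overall strategy is the paper's: use the $\R$-symmetry to reduce to the closed upper half-plane, decompose the combinatorial data there into $m$ ordered local pieces indexed by the real singular points, count each piece by a Catalan number, sum over compositions of $\frac{k-m+1}{2}$, handle $m=0$ by collapsing the homoclinic real axis to reduce to the DES count $C\bigl(\frac{k-1}{2}\bigr)$, and obtain the closed forms by comparison with Theorem~\ref{thm:number-strata} (the paper does exactly this and does not reprove the binomial identities).

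The one place your justification does not work as written is the local count in Step~2. You assert that ``the number of planar tree graphs on those $n_j$ points attached in a non-crossing way to the corresponding free separatrices is $C(n_j)$,'' citing Proposition~\ref{prop:tree-graph} and Theorem~\ref{thm:involution}. But those results count strata of $\mathcal{P}_{\Cc,p+1}$, i.e.\ planar trees on $p+1$ vertices, by $C(p)$; applied literally to a tree on $n_j$ vertices they give $C(n_j-1)$, not $C(n_j)$, and they say nothing about the degenerate case $n_j=0$. The number $C(n_j)$ is correct, but for a different object: the paper takes the $j$-th local piece to be the planar tree \emph{rooted at the real singular point} $z_j$, hence with $n_j+1$ vertices and $n_j$ edges, and invokes the standard fact that rooted planar trees with $n$ edges are counted by $C(n)$ (with $n_j=0$ giving the one-vertex tree). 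This rooting also repairs your sector bookkeeping in Step~1: the $m$ slots are the real singular points themselves, not the $m+1$ regions they cut out of the upper half-plane, so no ad hoc adjustment of the region count is needed, and the left-to-right order of the roots is what makes the forest ``ordered.'' With that correction your Steps~1--3 go through and coincide with the paper's argument; your generating-function check in Step~3 is a legitimate independent verification that the paper does not carry out.
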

\begin{proof} Let $m$
be the number of real singular points, which is odd (resp. even) when $k$ is
even (resp. odd). In all cases, $k-m+1$ is even, and there are ${k-m+1\over 2}$
singular points in each of the upper and lower half-planes.  As previously,
each real singular point is the root of two symmetric trees, one in the closed
upper half-plane, called \emph{upper tree}, and its mirror image in the closed lower
half-plane.  Because of the symmetry, we only discuss the upper half-plane.

Let us first discuss $m>0$, and let $z_m< \ldots < z_1$ be the real singular
points and $n_j+1$ be the number of vertices of the upper tree rooted at $z_j$,
so $n_j$ is the number of edges of the upper tree and also the number of non
real vertices. Then we have the relation $$ \sum_{j=1}^m n_j = {k-m+1\over 2}.
$$ For $n_1,\ldots,n_m$ fixed, then $C(n_1)\cdots C(n_m)$ is the number of
possible ``ordered'' forests of $m$ trees with the
$j$-th tree having $n_j$ edges, where $C(n) = \frac1{n+1}{2n \choose n}$ is
the $n$-th Catalan number, which counts the number of rooted trees with $n$
edges. Therefore, the number of forests with $m$ rooted trees with a total
number of $\frac{k-m+1}2$ edges is $D(k,m)$ given in \eqref{Dkm}.

In the case $m=0$, there are $\frac{k+1}2$ singular points
($\frac{k-1}2$ edges) and $k-1$ separatrices in the upper half-plane that form a  tree. Quotienting the real
axis to a regular point, this is completely similar to the DES case
for $k'= \frac{k-1}2$ and the number of configurations of a tree
attached to the separatrices is $  D(k,0) = \textstyle C\bigl(
k'\bigr)= \textstyle C\bigl( \frac{k-1}2 \bigr).  $ \end{proof}

\section{Structure of the family $\mathcal P_{\R,k+1}$}\label{sec:structure}

The bifurcations occuring in the family $\mathcal P_{\R,k+1}$ are of the following  types:
\begin{enumerate}
\item Bifurcations of homoclinic loops through $\infty$. The homoclinic loop along the real axis occurs on an open set in parameter space while the other bifurcations of homoclinic loops occur on algebraic sets of real codimension~1. 
\item Bifurcation of parabolic singular points. The bifurcation of  multiple singular points of multiplicity $n+1$ occurs in a set of real codimension $n$ when the singular point is on the real axis and of complex codimension $n$ otherwise. We will show below that full unfoldings exist inside the family $\mathcal P_{\R,k+1}$. \item Intersection of the former types.\end{enumerate} 

The family $\mathcal P_{\R,k+1}$ of polynomial vector fields with real coefficients
$P_\eps(z) \frac{\partial}{\partial z}$ for 
$$P_\eps(z) = z^{k+1} + \eps_{k-1} z^{k-1} + \dots + \eps_1z+\eps_0$$ 
with $\eps\in \R^k$ is as complete as possible under the constraint that the real axis is invariant. 

\begin{theorem} Let $k \geq2$ and $z_0$ be a non real singular point of $P_{\eps^*}(z) \frac{\partial}{\partial z}$ for a particular value $\eps^*$ of $\eps$. Then  a full unfolding exists in the family $\mathcal P_{\R,k+1}$.\end{theorem}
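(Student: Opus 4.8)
The plan is to verify the standard submersion criterion that characterises a full (generic) unfolding of a parabolic point, applied to the germ of $X_\eps=P_\eps\frac{\partial}{\partial z}$ at $z_0$. Let $n+1$ be the multiplicity of $z_0$ as a zero of $P_{\eps^*}$. Since the coefficients are real, $\ov{z_0}$ is a zero of the same multiplicity, and $z_0\neq\ov{z_0}$, so $(z-z_0)^{n+1}(z-\ov{z_0})^{n+1}$ divides $P_{\eps^*}$; hence $2(n+1)\leq k+1$, i.e. $n\leq\frac{k-1}2$, and in particular $k\geq 2n+1$. If $n=0$ the point is simple and structurally stable, so there is nothing to prove; I would therefore assume $n\geq1$.

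First I would set up the local factorisation. Pick a small closed disk $\ov D$ around $z_0$ containing no other zero of $P_{\eps^*}$. For $\eps$ near $\eps^*$, $P_\eps$ has exactly $n+1$ zeros with multiplicity in $D$, so we may write $P_\eps=Q_\eps R_\eps$ with $Q_\eps$ monic of degree $n+1$ carrying these zeros and $R_\eps$ holomorphic and nonvanishing on $\ov D$, both depending real-analytically on $\eps$. After the affine change $w=z-c(\eps)$ killing the degree-$n$ coefficient, $Q_\eps$ becomes $\widetilde Q_\eps(w)=w^{n+1}+b_{n-1}(\eps)w^{n-1}+\dots+b_0(\eps)$, and near $z_0$ the field reads $\widetilde Q_\eps(w)\,\widetilde R_\eps(w)\frac{\partial}{\partial w}$ with $\widetilde R_\eps$ a unit. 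By the standard criterion, the family is a full unfolding of the parabolic point at $z_0$ as soon as $\Phi\colon\eps\mapsto(b_0(\eps),\dots,b_{n-1}(\eps))\in\Cc^n$ is a submersion at $\eps^*$, i.e. $d\Phi_{\eps^*}\colon\R^k\to\Cc^n$ is onto (which is sensible since $k\geq 2n+1>2n$). The implicit function theorem then extracts an $n$-complex-parameter subfamily realising the model unfolding.

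Next I would compute $d\Phi_{\eps^*}$. Write $P_{\eps^*}(z)=(z-z_0)^{n+1}g(z)$ with $g(z_0)\neq0$. A perturbation inside $\mathcal P_{\R,k+1}$ is $\delta(z)=\sum_{j=0}^{k-1}\delta\eps_j z^j$, a real polynomial of degree $\leq k-1$. Dividing $P_{\eps^*}+\delta$ by the unit $g$ near $z_0$ and applying Weierstrass preparation to first order, the coefficients $b_0,\dots,b_{n-1}$ of the prepared polynomial equal, up to $O(\delta^2)$, the Taylor coefficients of orders $0,\dots,n-1$ of $\delta(z)/g(z)$ at $z_0$ (the recentring $w=z-c(\eps)$ changes them only at second order). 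Since $1/g$ is a unit at $z_0$, the passage from $\bigl(\delta^{(q)}(z_0)/q!\bigr)_{q=0}^{n-1}$ to $(b_i)_{i=0}^{n-1}$ is triangular and invertible. Hence $d\Phi_{\eps^*}$ is onto iff the real-linear map $\delta\mapsto\bigl(\delta^{(q)}(z_0)/q!\bigr)_{q=0}^{n-1}$, defined on the real vector space $V$ spanned by $1,z,\dots,z^{k-1}$, is onto $\Cc^n$.

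I expect this last surjectivity to be the main obstacle, since it is precisely where the reality of the coefficients and the bound $n\leq\frac{k-1}2$ must be used. Given a target $(c_0,\dots,c_{n-1})\in\Cc^n$, I would consider the set $A$ of complex polynomials $p$ of degree $\leq k-1$ satisfying the $2n$ Hermite conditions $p^{(q)}(z_0)=q!\,c_q$ and $p^{(q)}(\ov{z_0})=q!\,\ov{c_q}$ for $q=0,\dots,n-1$. Because $z_0\neq\ov{z_0}$ and $k-1\geq 2n$, these conditions are linearly independent (their common kernel is $(z-z_0)^n(z-\ov{z_0})^n\,\Cc[z]_{\leq k-1-2n}$), so $A$ is a nonempty complex-affine subspace of $\Cc[z]_{\leq k-1}$. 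The antiholomorphic involution $p(z)\mapsto\widehat p(z):=\ov{p(\ov z)}$, which conjugates the coefficients, maps $A$ into itself, since $\widehat p^{(q)}(z_0)=\ov{p^{(q)}(\ov{z_0})}=q!\,c_q$ and symmetrically at $\ov{z_0}$. Then for any $p_0\in A$ the polynomial $\tfrac12(p_0+\widehat p_0)$ still lies in $A$, has real coefficients, and has degree $\leq k-1$, so it belongs to $V$ and realises the prescribed Taylor coefficients of orders $0,\dots,n-1$ at $z_0$. This proves $d\Phi_{\eps^*}$ is onto, hence $\Phi$ is a submersion at $\eps^*$, and therefore $\mathcal P_{\R,k+1}$ contains a full unfolding of the parabolic point at $z_0$.
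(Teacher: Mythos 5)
Your proof is correct, but it takes a genuinely different route from the paper's. The paper argues constructively: writing $P_{\eps^*}=(z-z_0)^s(z-\ov{z_0})^sQ_{\eps^*}$, it exhibits an explicit $k$-real-parameter deformation $P_{\eps(\nu)}=R_{\eps(\nu)}\ov R_{\eps(\nu)}Q_{\eps(\nu)}$ inside $\mathcal P_{\R,k+1}$ in which the factor $R_{\eps(\nu)}(z)=(z-z_0)^s+\sum_{j<s}(\delta_j+i\eta_j)(z-z_0)^j$ is manifestly the Weierstrass polynomial at $z_0$ with freely varying complex coefficients (the complementary factor being a unit $C+O(\nu)+O(z-z_0)$), and a correction $-2\delta_{s-1}z^{\ell-1}$ in $Q_{\eps(\nu)}$ keeps the polynomial centered; the case $\ell=0$ is treated separately, and by symmetry and induction the same family simultaneously unfolds $\ov{z_0}$ and the remaining roots. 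You instead verify the submersion criterion abstractly: you linearize the Weierstrass coefficients at $\eps^*$, reduce surjectivity of $d\Phi_{\eps^*}$ to surjectivity of the $(n-1)$-jet map at $z_0$ on real polynomials of degree $\le k-1$, and prove the latter by a Hermite interpolation at the conjugate pair $\{z_0,\ov{z_0}\}$ followed by averaging with the involution $p\mapsto\ov{p(\ov z)}$ — which is exactly where the reality constraint and the bound $2(n+1)\le k+1$ enter. Your argument handles the paper's two cases ($\ell>0$ and $\ell=0$) uniformly and isolates the linear-algebraic heart of the matter, at the cost of being non-constructive and of not directly producing, as the paper's explicit family does, a single deformation that one can see unfolds all singular points at once. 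Both rely on the same (standard, here implicit) fact that full unfolding is equivalent to the parameter-to-Weierstrass-coefficients map being a submersion.
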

\begin{proof} Let $s$ be the multiplicity of $z_0$: $z_0$ is simple if $s=1$ and multiple otherwise. We can of course suppose that $z_0=a+i$, with $a\in \R$. 
Then $P_{\eps^*}(z) = (z-a-i)^s(z-a+i)^sQ_{\eps^*}(z)$, where ${\rm deg}(Q_{\eps^*})= \ell=k+1-2s$ and $Q_{\eps^*}$ is a polynomial with real coefficients. 

If $\ell>0$, let us consider the following unfolding
$$P_{\eps(\nu)}(z) = R_{\eps(\nu)}(z) \ov{R}_{\eps(\nu)}(z) Q_{\eps(\nu)}(z),$$
with $$\begin{cases}
R_{\eps(\nu)}(z)=(z-a-i)^s + \sum_{j=0}^{s-1} (\delta_j+i\eta_j) (z-a-i)^j,
  \\[2\jot]
\ov{R}_{\eps(\nu)}(z)=(z-a+i)^s + \sum_{j=0}^{s-1}(\delta_j-i\eta_j) 
  (z-a+i)^j,\\[2\jot]
Q_{\eps(\nu)}(z)= Q_{\eps^*}(z)-2\delta_{s-1}z^{\ell-1} +\sum_{j=0}^{l-2} \mu_jz^j,\end{cases}$$
where $\nu=(\delta_0, \eta_0,\dots, \delta_{s-1}, \eta_{s-1}, \mu_0, \dots \mu_{\ell-2})\in \R^k$. Let $\eps(\nu)=\eps^*+\nu$.
Then $P_{\eps(\nu)}\in \mathcal P_{\R,k+1}$ and, from its form,  $P_{\eps(\nu)}$ is a complete unfolding of $P_{\eps^*}$ in the neighborhood of $a+i$, since $\ov{R}_{\eps(\nu)}(z) Q_{\eps(\nu)}(z) = C+ O(\nu) +O(z-a-i)$, i.e. $R_{\eps(\nu)}(z)$ is the Weierstrass polynomial associated to $a+i$. 
By symmetry, $P_{\eps(\nu)}$ is a complete unfolding of $P_{\eps^*}$ in the neighborhood of $a-i$. Using induction on the degree of $P_{\eps^*}$, we can also conclude that $P_{\eps(\nu)}$ is a complete unfolding of $P_{\eps^*}$ in the neighborhood of each zero of $Q_{\eps^*}$.

Let us now consider the case $\ell=0$. In this case, necessarily $a=0$, and  we consider the unfolding $P_{\eps(\nu)}(z) = R_{\eps(\nu)}(z) \ov{R}_{\eps(\nu)}(z)$ with
$$\begin{cases}
R_{\eps(\nu)}(z)=(z-i)^s + \sum_{j=0}^{s-2} (\delta_j+i\eta_j)(z-i)^j +i\eta_{s-1}(z-i)^{s-1},\\
\ov{R}_{\eps(\nu)}(z)=(z+i)^s + \sum_{j=0}^{s-2} (\delta_j-i\eta_j)(z+i)^j-i\eta_{s-1}(z-i)^{s-1},\end{cases}$$ 
where $\nu=(\delta_0, \eta_0,\dots, \delta_{s-2}, \eta_{s-2}, \eta_{s-1})\in \R^k$.
Then $P_{\eps(\nu)}$ is a complete unfolding of $P_{\eps^*}$ in the neighborhood of $\pm i$.
\end{proof}

\begin{remark} Consider $P_\eps$ generic. Since the eigenvalues of real singular points are real and the eigenvalues of complex conjugate points are complex conjugate, then $k+1$ real parameters would be needed if the eigenvalues were to be independent. But  the eigenvalues of the singular points $z_1, \dots, z_{k+1}$ of $P_\eps$ satisfy $\sum_{j=1}^{k+1} \frac1{P_\eps'(z_j)}=0$. Since this sum is real, this is a condition of real codimension 1. This explains why a generic $P_\eps$ depends on $k$ real parameters.\end{remark} 

\section{The case $k=3$}\label{sec:k=3} 
\begin{theorem} The bifurcation diagram of the vector field $(z^4+\eps_2z^2+\eps_1z+\eps_0)\frac{\partial}{\partial z}$ is given in Figure~\ref{Bif_diag-4}.
The bifurcation diagram has a conic structure using \eqref{scaling}. Its intersection with a sphere minus a point in the regular stratum with no real singular point is given in Figure~\ref{Bif_diag-4_2D}. \end{theorem}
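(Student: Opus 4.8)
The plan is to reduce the problem to a finite case analysis organized by the number $m$ of real singular points, which for $k=3$ can only be $m\in\{0,2,4\}$. First I would invoke the scaling \eqref{scaling}: for $k=3$ it reads $(z,\eps_2,\eps_1,\eps_0,t)\mapsto(zr,\eps_2 r^{-1},\eps_1,\eps_0 r,tr^3)$, which shows the bifurcation diagram in $(\eps_2,\eps_1,\eps_0)$-space is invariant under a one-parameter group of dilations (with the given weights), hence has a conic structure; this immediately justifies the claim that it suffices to describe the intersection with a suitably chosen sphere minus a point. The point to be removed should be a regular value lying in a stratum with no real singular point, which exists since for $k=3$ odd such strata are open and nonempty (their number is $C(1)=1$ by Theorem~\ref{thm:number-strata}).

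Next I would enumerate the strata. By Theorem~\ref{thm:number-strata}, $D(3)=2\binom{2}{1}=4$, and by Theorem~\ref{Prop1} these split as $D(3,0)=C(1)=1$, $D(3,2)=\sum_{n_1+n_2=1}C(n_1)C(n_2)=2$, and $D(3,4)=C(0)^4=1$. So there are exactly four open strata, and I would draw a representative phase portrait for each: the $m=4$ stratum (four real nodes, a "chain"), the two $m=2$ strata (two real nodes, with the single pair of complex-conjugate singular points attached as a branch to either the left or the right real node — these are the two strata exchanged by $z\mapsto -z$), and the $m=0$ stratum (real axis a homoclinic loop, one pair of complex-conjugate points in each hemicycle). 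Then I would identify the codimension-one bifurcation locus: by the discussion in Section~\ref{sec:structure}, the real-codimension-one bifurcations are homoclinic loops through $\infty$ not along $\R$, which form algebraic hypersurfaces, together with the boundary of the $m=0$ region where a pair of complex-conjugate points hits $\R$ (this is where $P_\eps$ acquires a real double root, i.e. the discriminant-type locus). The higher-codimension strata are the parabolic loci: real double points (codimension $1$ on $\R$, but here appearing as walls bounding the $m=0$ region), non-real double points (complex codimension $1$, hence real codimension $2$), and their intersections, culminating in the origin.

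Then I would assemble these pieces into the global picture. On the sphere (minus the chosen point), the $m=0$ region is a single spherical cell bounded by the curve where two conjugate points collide on $\R$; crossing that curve at a generic point passes to an $m=2$ configuration. The $m=2$ regions are further subdivided by homoclinic-loop walls and bounded by the locus where two real nodes collide (giving $m=4$ on one side after the double point splits along $\R$, or re-entering $m=0$ territory). I would verify by direct residue and eigenvalue computations — using part (8) of the first Proposition, $\sum\Res(1/P_\eps,z_j)=0$, and part (9) for the homoclinic-loop conditions — exactly which walls separate which pairs of the four strata, check consistency with the adjacency forced by the construction in the proof of Theorem~\ref{thm:number-strata}, and confirm the analytic parametrization of each open stratum by $\Hh^k$ (suitably real-restricted) from Proposition~\ref{thm:R_temps transvers}. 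Finally I would record the resulting stratified set as Figure~\ref{Bif_diag-4} and its spherical slice as Figure~\ref{Bif_diag-4_2D}.

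The main obstacle will be the last assembly step: getting the global incidence structure of the walls correct, in particular locating precisely where the homoclinic-loop hypersurfaces sit relative to the discriminant walls and how they partition the two $m=2$ strata, and checking that the three-dimensional cone really does close up consistently around its singular edges (the non-real parabolic locus) and its vertex. This is essentially a careful bookkeeping problem — keeping track of which separatrix lands where as one crosses each wall — rather than a matter of a single hard estimate; the scaling and the stratum count above are the easy inputs, but making the figure both complete and provably correct requires exhausting all local transition types and matching them globally.
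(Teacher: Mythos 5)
Your overall strategy matches the paper's: use the scaling \eqref{scaling} for the conic structure, classify the codimension-one bifurcations into real parabolic points (the discriminant locus) and symmetric pairs of homoclinic loops, locate these loci explicitly, and assemble the picture on a sphere minus a point in the $m=0$ stratum. Your preliminary stratum count ($D(3)=4$, split as $1+2+1$ over $m=4,2,0$) is a useful cross-check that the paper does not spell out, and your identification of the two $m=2$ strata as exchanged by $z\mapsto -z$ is correct.

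However, the substance of the paper's proof is precisely the computations you defer to the phrase ``I would verify by direct residue and eigenvalue computations,'' and without them the figures cannot be drawn or certified. Concretely, the paper (i) writes the quartic discriminant $\Delta$ and identifies the real-multiple-root locus as the swallowtail, with the non-real parabolic pairs forced (by the vanishing of the sum of roots) to sit at $\pm ia$, i.e.\ on the half-curve $\eps_1=0$, $\eps_2^2-4\eps_0=0$, $\eps_2>0$, the other half of that curve being the swallowtail's self-intersection; and (ii) shows by explicit factorization that every non-real homoclinic-loop bifurcation lies in the plane $\eps_1=0$: for $m=2$ one writes $P(z)=(z-2a)(z-2b)((z+a+b)^2+c^2)$ and finds that $P'(-a-b\pm ic)\in i\R$ forces $a+b=0$, giving the wall $\{\eps_1=0,\ \eps_0<0\}$; for $m=0$ one writes $P(z)=((z-a)^2+b^2)((z+a)^2+c^2)$ and finds $a=0$, giving the wall $\{\eps_1=0,\ \eps_0,\eps_2>0,\ \eps_2^2-4\eps_0>0\}$, together with its boundary curves. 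These facts --- in particular that the entire non-real bifurcation set is confined to $\{\eps_1=0\}$ and how it is cut by the parabola $\eps_2^2=4\eps_0$ --- are what determine the incidence structure you correctly flag as the main obstacle. So the route is the same, but your proposal stops exactly where the proof begins; to complete it you must carry out these two explicit computations rather than appeal generically to the residue conditions.
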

\begin{figure} \begin{center} \includegraphics[width=15cm]{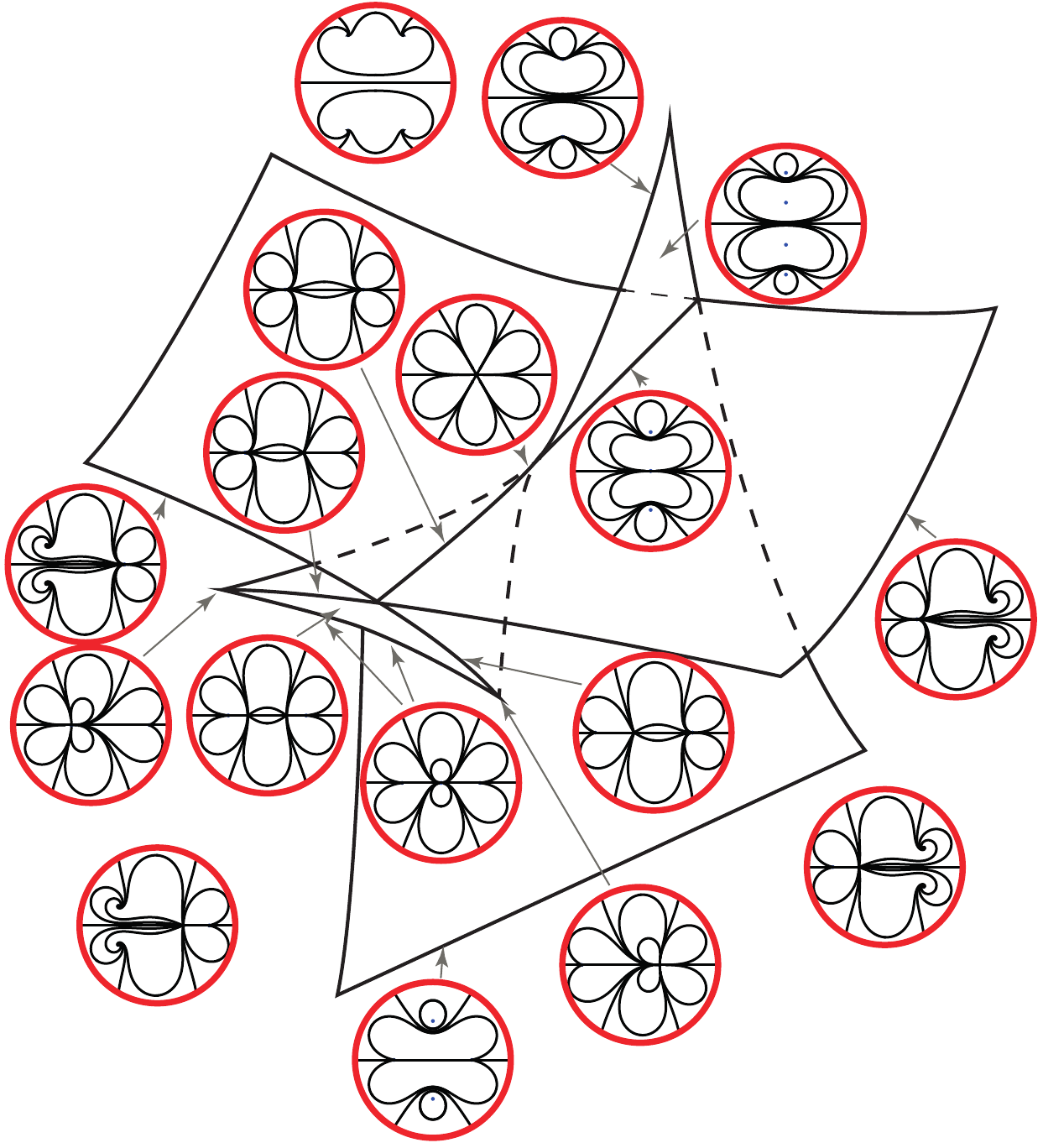}\caption{Bifurcation diagram of  $(z^4+\eps_2z^2+\eps_1z+\eps_0)\frac{\partial}{\partial z}$.}\label{Bif_diag-4}\end{center}\end{figure}
\begin{figure} \begin{center} 
% Je pense qu'il y avait un typo dans le nom de la figure.
% C'était écrit Fig/Bif_diag-4_2D
\includegraphics[width=15cm]{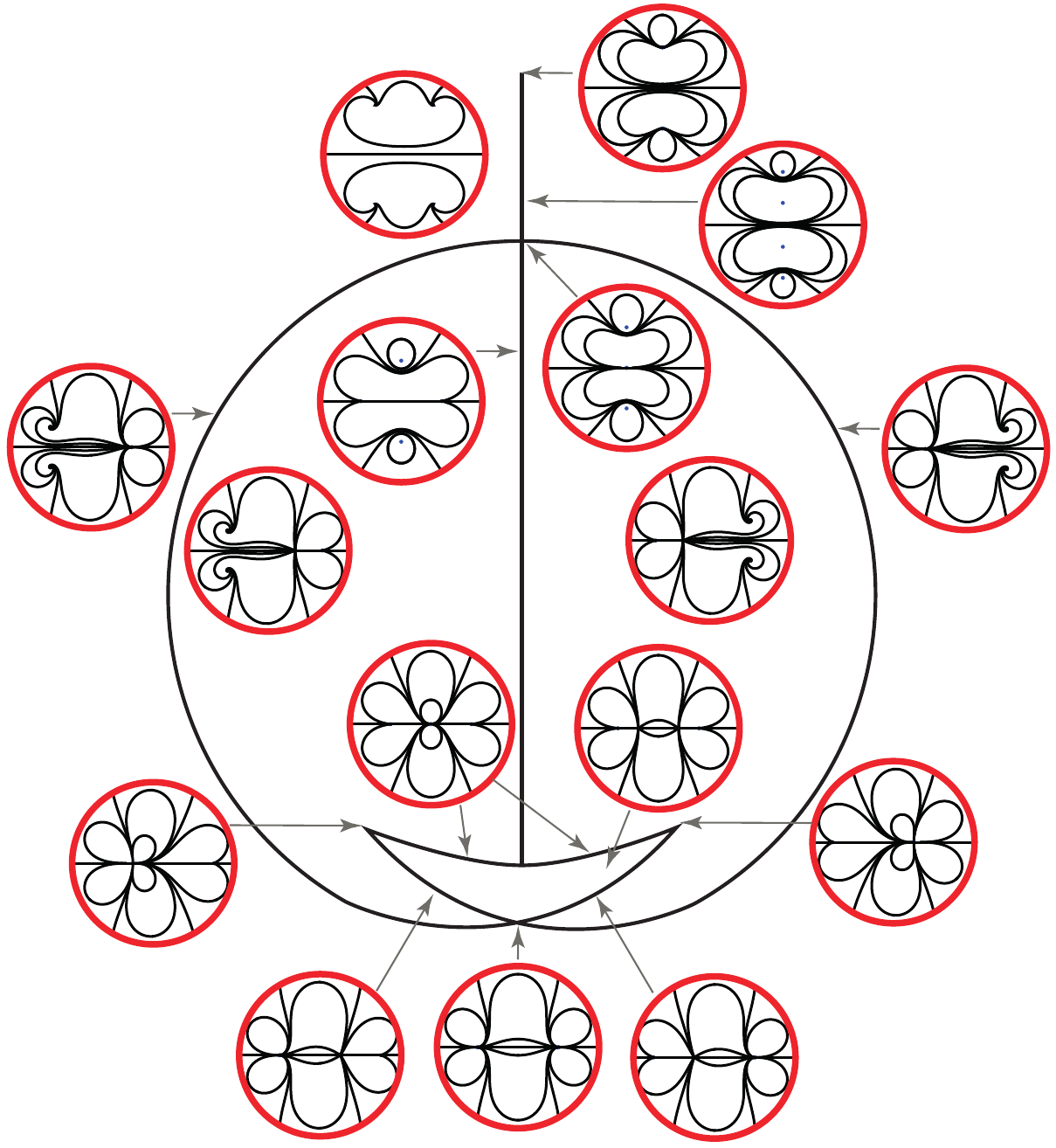}
\caption{The intersection of the bifurcation diagram of  $(z^4+\eps_2z^2+\eps_1z+\eps_0)\frac{\partial}{\partial z}$ with a sphere minus a point.}\label{Bif_diag-4_2D}\end{center}\end{figure}

\begin{proof} 
The codimension 1 bifurcations are of two types: real parabolic points of multiplicity 2 and pairs of homoclinic loops symmetric with respect to the real axis. 
The higher order bifurcations occur at the multiple points, located either  at the boundary of the codimension 1 bifurcation surfaces, or at the intersection of the codimension 1 bifurcation surfaces. Note that a complex double parabolic point has real codimension 2. 
\smallskip
 
\noindent{\bf Bifurcations of parabolic points.} They occur when the discriminant $\Delta= 256\eps_0^3-27\eps_1^4+144\eps_0\eps_1^2\eps_2-128\eps_0^2\eps_2^2-4\eps_1^2\eps_2^3+16\eps_0\eps_2^4$ vanishes. The locus where $\Delta=0$ corresponding to multiple real roots is well known: it is the swallow tail. There is also the locus of nonreal pairs of parabolic points $(z_0,\ov{z}_0)$.
Because their sum vanishes, then $z_0=ia$ for some $a\in\R^*$. This occurs along the curve $\eps_1=0$, $\eps_2^2-4\eps_0=0$ and $\eps_2>0$.  (Note that the other part of the curve $\eps_1=0$, $\eps_2^2-4\eps_0=0$, $\eps_2<0$, corresponds to a pair of real parabolic points: this is the self-intersection curve of the swallow tail.) 

\smallskip
\noindent{\bf Bifurcations of homoclinic loops}. Since the real axis is invariant, it is a homoclinic loop as soon as there are no real singular point. This occurs in an open region in parameter space, and it is not a bifurcation. Otherwise, homoclinic loops occur in symmetric pairs  when one singular point has a pure imaginary eigenvalue. There are two cases to consider: $m=0$ and $m=2$. 

When $m=2$, then $P$ has the form $P(z) = (z-2a)(z-2b)((z+a+b)^2+c^2)$ with $c\in\R^*$. $P'(-a-b\pm ic)$ is pure imaginary only if $(a+b)c^2=0$. Hence $a+b=0$,  $P(z) = (z-2a)(z+2a)(z^2+c^2)$ and $P'(\pm ic)= \mp2ic(4a^2+c^2)\neq0$. This occurs on $\eps_0 < 0$, $\eps_1 = 0$. 
(Note that necessarily $\eps_0<0$ when $\eps_1=0$ and $m=2$.) When $\eps_0=0$, this is the particular case of a parabolic real point at $z=0$. 

When $m=0$, then $P$ has the form $P(z)= ((z-a)^2+b^2)((z+a)^2+c^2)$ for
$b,c\in\R^*$. Then $P'(a+ib)\in i\R^*$ if and only if $a=0$ and $b\neq0,\pm c$.
This occurs for $\eps_1=0$, $\eps_2^2-4\eps_0>0$ and $\eps_0,\eps_2>0$. This
surface is bounded by $\eps_1=\eps_0=0$, $\eps_2>0$ where a real  parabolic
point occurs at $z=0$ and homoclinic loops around
$\pm i\sqrt{\eps_2}$, and $\eps_1=0$, $\eps_2^2-4\eps_0=0$, $\eps_2>0$ where a
pair of symmetric purely imaginary parabolic points occurs 
(and no homoclinic loops).

\end{proof}

\section{Realization}\label{sec:real}
\def\kk{$k$}

We briefly discussed the invariants of generic vector fields of $\mathcal P_{\R,k+1}$
in Section~\ref{sec:P_R}. In this section, we precisely define
their modulus of classification and we prove that any modulus is
realizable as a generic vector field of $\mathcal P_{\R,k+1}$. 

\subsection{Properties of the combinatorial and analytic invariants}

For a generic vector field of $\mathcal P_{\R,k+1}$, recall that the
combinatorial invariant is a non crossing involution (see
Definition~\ref{def:invariant comb})  and the analytic invariants are the
transversal times and potentially the period of a homoclinic loop along $\R$.
From Propositions~\ref{attachment} and \ref{attachment_m=0} we can recover how
the separatrices attach to the singular points from the combinatorial data. For
the rest of the section, we use the notation $z_{[j]}$ for a singular point,
where $[j]$ is an equivalence class of separatrices landing at the same singular point.
The combinatorial data tells us how many singular points are
real, as seen in Lemma~\ref{lem:module} below.  Indeed, $\ol{z_{[j]}} =
z_{[-j]}$, hence $z_{[j]}$ is real if and only if $[j] = [-j]$.

Recall that the separatrix graph of $X$ divides $\Cc$ into simply
connected zones. We denote a zone with two ends in the direction
$e_j,e_k$ by $Z_{j,k}$ (or $Z_{k,j}$ interchangeably), and the
transversal time in $Z_{j,k}$ by $\eta_{j,\tau(j)}$.

The following lemma explains how to recover some features of the dynamics, including the presence of a homoclinic loop along the real axis and the number of real singular points from the combinatorial data.

\begin{lemma}\label{lem:module}
Let $X = P\frac\del{d z} \in \mathcal P_{\R,k+1}$ be a generic
vector field. Let $\otau$ be the combinatorial
invariant of $X$: its domain $\dom(\otau)$ is a set of cardinality $k-1$ (resp. $k$) if $\R$ is a homoclinic loop (resp. otherwise). Let $\ell = k -\#\dom(\otau)$. (Then $\ell=1$ if $\R$ is a
homoclinic loop, and $\ell=0$ otherwise.)
\begin{enumerate}
  \item $\otau$ is equivalent to the non crossing involution with no fixed points, 
    $\tau$, of $\{\pm1,\ldots,\pm(k-\ell)\}$ that pairs the ends of zones of $X$.
  \item The number $m\in \N$ of real singular points of $X$ is given by
    \begin{align*}
      m+\ell-1
        &= \text{number of fixed points of $\otau$,} \\
        &= \text{number of pairs $\left(j,\tau(j)\right)$ with $j > 0$ and
          $\tau(j) < 0$,} \\
        &=\text{number of zones whose interior intersects $\R$.} 
    \end{align*}
  \item The transversal times satisfy
    $\eta_{j,\tau(j)} = -\overline{\eta_{-j,\tau(-j)}}$.
		In particulier,	there are $m+\ell-1$ transversal times in $i\R_+$.
\end{enumerate}
\end{lemma}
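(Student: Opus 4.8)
The proof will rest on the single symmetry of the picture. Since $P$ has real coefficients, $\ov{P(z)}=P(\ov z)$, so the conjugation $c\colon z\mapsto\ov z$ maps every trajectory of $X$ onto a trajectory of $X$ with the same time parametrization; it is therefore an orbit self-equivalence of $X$. Consequently the separatrix graph is $c$-invariant, $c$ permutes the finite singular points with $z_{[j]}\mapsto\ov{z_{[j]}}=z_{[-j]}$, it permutes the $2k$ ends of $\infty$ by $j\mapsto-j$ (since $\ov{e_j}=e_{-j}$ by \eqref{def:e}), and it permutes the zones; moreover, fixing the additive constant of $t=\int\frac{dz}{P}$ by a base point on $\R$, one gets $t(\ov z)=\ov{t(z)}$, so in the rectifying coordinate $c$ acts by $w\mapsto\ov w$. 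I record three consequences. (i) $\tau$ is $c$-equivariant: $\tau(-j)=-\tau(j)$. (ii) The zone $Z_{j,\tau(j)}$ is $c$-invariant if and only if $\{j,\tau(j)\}=\{-j,-\tau(j)\}$, i.e.\ $\tau(j)=-j$. (iii) A zone meets $\R$ if and only if it is $c$-invariant: if $Z\cap\R\neq\emptyset$, then, $Z$ being open, a union of trajectories and disjoint from the singular points, $Z\cap\R$ is a nonempty union of whole components of $\R$ deprived of its real singular points, each such component being a $c$-invariant trajectory contained in $Z$, so $Z\cap c(Z)\neq\emptyset$, hence $Z=c(Z)$ (zones are equal or disjoint); conversely a nonempty connected $c$-invariant open set cannot lie in one open half-plane, hence meets $\R$. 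In particular $e_j$ ($j>0$) can only be linked to some $e_{j'}$ with $j'>0$ or with $j'=-j$, which is the constraint recalled just before Definition~\ref{def:invariant comb}.

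Parts (1) and (2). Reconstruct $\tau$ from $\otau$: for $j>0$ with $\otau(j)\neq j$ set $\tau(j)=\otau(j)$ (the sign forced, since $j>0$ and $\tau(j)\neq-j$) and $\tau(-j)=-\otau(j)$; for a fixed point $j$ of $\otau$ set $\tau(j)=-j$; and when $\ell=1$ the marked points $e_{\pm k}$ lie outside $\dom(\otau)$, in agreement with Definition~\ref{def:invariant comb}, and bound no zone. One checks that the $\tau$ so obtained is the noncrossing fixed-point-free involution of $\{\pm1,\dots,\pm(k-\ell)\}$ pairing the ends of the $k-\ell$ zones of $X$, that $|\tau(j)|=\otau(j)$ for $j>0$, and that $\otau\leftrightarrow\tau$ is a bijection; this is part (1). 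Hence the fixed points of $\otau$ are in bijection with the $c$-invariant zones $Z_{j,-j}$ by (ii), and for $j>0$ one has $\tau(j)<0$ iff $\tau(j)=-j$ iff $j$ is fixed by $\otau$ (by the constraint above); together with (iii) this gives the three expressions for $m+\ell-1$ in part (2). It remains to count the $c$-invariant zones. If $m>0$, then $X$ is generic in $\mathcal P_{\Cc,k+1}$; writing $z_m<\dots<z_1$ for the real singular points, $\R\setminus\{z_1,\dots,z_m\}$ has $m+1$ components, each a single trajectory; the two unbounded ones are separatrices of $\infty$ landing at $z_m$, resp.\ $z_1$, hence lie in the separatrix graph, whereas each bounded component $(z_{i+1},z_i)$ has $z_{i+1}$ and $z_i$ as $\alpha$- and $\omega$-limits and therefore lies in the zone adherent to exactly these two points. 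Since a zone is adherent to exactly two singular points, distinct bounded components lie in distinct zones, no zone contains two of them, and by (iii) every $c$-invariant zone contains exactly one; so the $c$-invariant zones correspond bijectively to the $m-1$ bounded components, giving $m-1=m+\ell-1$. If $m=0$, then $k$ is odd, $P>0$ on $\R$, and $\R$ is a homoclinic loop through $\infty$ --- a pair of merged separatrices --- hence part of the separatrix graph; a $c$-invariant zone would have to contain the only component, $\R$ itself, which is impossible; so there are $0=m+\ell-1$ of them, consistently with $\otau$ having no fixed point.

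Part (3). In the rectifying coordinate a zone $Z_{j,\tau(j)}$ is a horizontal strip $S$ carrying the two images of $\infty$ on its two horizontal boundary lines, $\eta_{j,\tau(j)}$ being the vector from the lower image to the upper one. The map $c$ carries $S$ to the horizontal strip realizing $c(Z_{j,\tau(j)})=Z_{-j,\tau(-j)}$ and interchanges the upper and lower images of $\infty$, so $\eta_{-j,\tau(-j)}=\ov{-\eta_{j,\tau(j)}}=-\ov{\eta_{j,\tau(j)}}$, which indeed lies in $\Hh$. For a $c$-invariant zone, where $Z_{-j,\tau(-j)}=Z_{j,-j}=Z_{j,\tau(j)}$, this reads $\eta_{j,-j}=-\ov{\eta_{j,-j}}$, so $\Re\eta_{j,-j}=0$, and since $\eta_{j,-j}\in\Hh$ we get $\eta_{j,-j}\in i\R_+$; by part (2) there are $m+\ell-1$ such zones. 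I expect the main obstacle to be the topological bookkeeping in part (2) --- matching each $c$-invariant zone with exactly one bounded segment of $\R$ between consecutive real singular points, which uses the \cite{DES05} fact that a zone is adherent to exactly two finite singular points --- together with the degenerate case $m=0$, where the missing $k$-th DES zone has collapsed onto the homoclinic loop $\R$ and the marked points $e_{\pm k}$ fall outside $\dom(\otau)$.
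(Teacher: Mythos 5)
Your proof is correct and follows essentially the same route as the paper's: part (1) via the equivariance $\tau(-j)=-\tau(j)$ coming from $\ov{P(z)}=P(\ov z)$, part (2) by matching the conjugation-invariant zones with the bounded components of $\R$ minus its singular points (the paper counts $n$ zones $\Rightarrow$ $n+1$ real singular points, you count in the reverse direction), and part (3) by the action $w\mapsto\ov w$ in the rectifying coordinate. The paper's own proof is far terser (part (3) is dismissed as ``a direct computation''); your write-up merely supplies the details it leaves implicit.
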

\begin{proof}
  For (1), $\tau$ and $j\mapsto -j$ commute since $P$ commutes with the
  complex conjugation. Then, we have 
	$$
		\tau(j) = 
			\begin{cases} 
				\otau(j), & \otau(j)\neq j, \\ 
				-j, &\otau(j)=j,
			\end{cases}
		\rlap{\qquad for $j>0$,}
	$$ 
	and $\tau(-j)=-\tau(j)$ for $j < 0$.

  For (2), if $\ell = 1$, then $\otau$ has no fixed points, so $m = 0 $ is the
  number of fixed points of $\otau$. For the case $\ell= 0$, let $n$ be the
  number of zones with an interior intersecting the real axis. This divides the
  real axis in $n$ finite intervals, each of which is an orbit of $P\frac{\partial}{\partial z}$ and, in
  addition, the two separatrices, along $\R_+$ and $\R_-$ on the boundary of some zones.
  This partitions $\R$ into $n+2$ orbits and $n+1$ singular points, one between
  each orbit. We conclude that $m=n+1$.

  (3) is a direct computation.
\end{proof}

\begin{notation}
  We will use the notation $\Fix(\otau) := \{j\mid \otau(j) = j\}$.
\end{notation}

For $m=0$, we have ${k-1\over 2}$ zones in the upper
half-plane. We can define a transversal time for each of them.
We also define the period of the $\R$-axis
$$
  \eta_\R = \int_\R \frac{{\rm d}z}{P(z)},
$$
which is an invariant. Note that $\eta_\R > 0$ since we only
consider the case where the leading coefficient of $P$ is
positive.

\begin{definition}[Modulus of classification of a generic vector field]\label{def:order}
  Let $P\frac\del{\del z}$ be a generic vector
	field of $\mathcal P_{\R,k+1}$. Let $m$ be the number of real
	roots of $P$, and $\ell$ be the number of homoclinic loops (either 0 or 1).
  The modulus of invariants of $P$ is a triple
	$(\otau,\eta)$, where 
  $\otau$ is a non crossing involution of $\{1,\ldots, k-\ell\}$
  with $m-1+\ell$ fixed points, and $\eta\in \Hh^{{k-m+1\over 2}-\ell}
  \times (i\R^+)^{m-1+\ell}\times (\R^+)^\ell$ is the vector whose coordinates are the 
  transversal times and period of $\R$, ordered as follows (see Figure~\ref{fig:tau}): 
  \begin{enumerate}
    \item 
let $0 < q_1 < \cdots < q_{(k-m+1)/2-\ell}$ be the odd numbers in $\{1, \dots, k-\ell\}$ which are not fixed points of $\otau$; then  we set $\eta_j$ as the transversal time for the zone
      $Z_{q_j, \otau(q_j)}$; 
    \item next, we consider $0 < r_1 < \cdots
      < r_{m-1}$ in $\Fix(\otau)$, and we set $\eta_{j+(k-m+1)/2 -\ell}$
      as the transversal time in $Z_{r_j,-r_j}$; 
    \item lastly, if $\ell=1$,
      the last coordinate is $\eta_\R = \int_\R \frac{dz}{P(z)}\in \R^+$. 
  \end{enumerate}
\end{definition}

\begin{figure}[tbph]
	\centering
	\subfigure[Zones with $k=7$, $m=4$ and $\ell=0$.\label{fig:tau a}]
	{\includegraphics{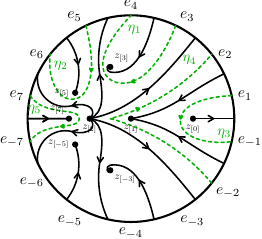}}
	\qquad\quad
	\subfigure[Zones with $k=7$, $m=0$ and $\ell=1$.\label{fig:tau b}]
		{\includegraphics{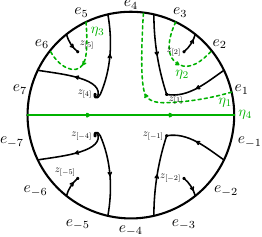}}
	\caption{Zones with marked ends $e_j = 
    \exp\left( i\pi{2j-\sgn(j)\over 2k}\right)$. 
    Transversal times of zones in symmetric pairs are 
    ordered first, then of zones crossing the real line (if applicable), then the period
    of the homoclinic loop (if applicable).}\label{fig:tau}
\end{figure}

\subsection{Realisation} This is the main result of the section.

\begin{theorem}[Realisation and moduli space]\label{theo:realize}
  Let $\mathcal S_k$ be the set of non
  crossing involutions of $\{1,\ldots,k\}$, and $\mathcal
  S_{k-1}^*$ be the set of the non crossing involutions of $\{1, \dots, k-1\}$
  without fixed point. (Note that $\mathcal S_{k-1}^*$ is empty when 
	$k$ is  even.)
  For $\otau \in \mathcal S_k\cup \mathcal S_{k-1}^*$, let
  $\ell = k - \#\dom(\otau)$ and $m = \#\Fix(\otau) + 1 - \ell$.
	The moduli space $\mathcal M_k$ of the
  generic vector fields in $\mathcal P_{\R,k+1}$ is
  $$
    \mathcal M_k = \bigcup_{\otau\in \mathcal S_k\cup \mathcal S_{k-1}^*}
      \{\otau\} \times \Hh^{{k-m+1\over 2} -\ell} \times
      (i\R^+)^{m-1+\ell}\times (\R^+)^\ell.
  $$
  For each $(\otau,\eta)\in \mathcal M_k$, there exists a unique
  monic centered generic polynomial $P$ of degree $k+1$ with real 
	coefficients such that $P\frac\del{\del z}$ has $(\otau,\eta)$ for
  its modulus, $m$ real singular points and $\ell$ homoclinic
  loop.
\end{theorem}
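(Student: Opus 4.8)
The plan is to bootstrap from the complex case (Theorem~\ref{thm:temps transvers}), which already provides a unique generic $X_\eps \in \mathcal{P}_{\Cc,k+1}$ for every pair of a combinatorial part and an analytic part in $\Hh^k$. The essential new point is to characterize which complex moduli come from \emph{real} polynomials, and then to match up the real combinatorial/analytic data $(\otau,\eta)$ with those moduli. So I would first construct, from a given $(\otau,\eta) \in \mathcal M_k$, a symmetric complex modulus: extend $\otau$ to the non-crossing involution $\tau$ of $\{\pm 1,\ldots,\pm(k-\ell)\}$ exactly as in Lemma~\ref{lem:module}(1) (pair $j$ with $\otau(j)$ when $\otau(j)\neq j$, and with $-j$ when $\otau(j)=j$), and when $\ell=1$ insert the real axis as an extra homoclinic separatrix pair to promote this to a full non-crossing involution of $\{\pm 1,\ldots,\pm k\}$. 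Simultaneously, extend the analytic vector $\eta$ to a full vector $\widetilde\eta \in \Hh^k$ by placing, for each pair of symmetric zones, the two transversal times $\eta_{j,\tau(j)}$ and $-\overline{\eta_{j,\tau(j)}}$, and for each self-symmetric zone the single purely imaginary value already prescribed (this is consistent with Proposition~\ref{thm:R_temps transvers} and Lemma~\ref{lem:module}(3)). When $\ell=0$ this $\widetilde\eta$ lies in $\Hh^k$ directly; when $\ell=1$ one also records $\eta_\R \in \R^+$, which is forced to be the transversal/period datum of the inserted real separatrix.

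Next I would invoke Theorem~\ref{thm:temps transvers} to obtain the unique generic complex vector field $X_{\eps}$ with this combinatorial part and analytic part $\widetilde\eta$, and then argue that $\eps \in \R^k$. The argument is a uniqueness/rigidity one: the vector field $\overline{X_{\ov{\eps}}}$ defined by $\overline{P_{\ov\eps}(\bar z)}\,\frac{\partial}{\partial z}$ is again generic in $\mathcal{P}_{\Cc,k+1}$, and complex conjugation $z\mapsto \bar z$ is an orientation-reversing homeomorphism of $\Cc$; composing with $t\mapsto -t$ (equivalently, reflecting the strips) shows that $\overline{X_{\ov\eps}}$ has combinatorial part $\tau$ (our $\tau$ was built to be symmetric under $j\mapsto -j$) and analytic part $\widetilde\eta$ again, because the rectifying coordinate and the vector $\eta$ transform by $\eta \mapsto -\overline{\eta}$ under this symmetry, which is exactly how we chose the entries. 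By the uniqueness clause of Theorem~\ref{thm:temps transvers}, $\ov\eps = \eps$, so $P_\eps$ has real coefficients. Then Proposition~\ref{thm:R_temps transvers} and Lemma~\ref{lem:module} identify the number of real singular points as $m$ and the number of homoclinic loops along $\R$ as $\ell$, and confirm that the modulus of $P_\eps\frac{\partial}{\partial z}$ in the sense of Definition~\ref{def:order} is precisely $(\otau,\eta)$.

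For uniqueness within $\mathcal{P}_{\R,k+1}$: if two real generic vector fields have the same real modulus $(\otau,\eta)$, then by the above dictionary they have the same complex combinatorial part $\tau$ and the same $\widetilde\eta \in \Hh^k$, hence coincide by Theorem~\ref{thm:temps transvers}. This also shows the union in the statement is disjoint and exhausts $\mathcal M_k$: every real generic vector field produces some $(\otau,\eta)$ in exactly one piece of the union, and conversely every such pair is realized. Finally, one should check that the constraints $\ell = k - \#\dom(\otau)$, $m = \#\Fix(\otau)+1-\ell$, and the parity constraints ($\ell=1$ forces $k$ odd; $\mathcal S_{k-1}^*=\varnothing$ for $k$ even) are compatible with the realizability of $\tau$ as a non-crossing involution of $2k$ (or $2(k-1)$) points — this is a purely combinatorial verification using Corollary~\ref{rem:ends} and the three bullet conditions following Definition~\ref{def:invariant comb}.

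The main obstacle I expect is the rigidity step showing $\eps\in\R^k$ — more precisely, pinning down exactly how the transversal-time vector $\eta$ transforms under the reflection $z\mapsto\bar z$, so that the symmetric choice of $\widetilde\eta$ really is a fixed point of the induced involution on moduli, and then transferring this to a fixed point of the holomorphic map $\eta\mapsto\eps$. One must be careful that the reflection acts on the \emph{labelling} of the marked points $e_j$ (it sends $e_j \leftrightarrow e_{-j}$, up to the $k$-fold ramification at $\infty$ and the direction conventions of Theorem~\ref{thm:temps transvers}(3)), so that the reindexed analytic vector of $\overline{X_{\ov\eps}}$ is $\bigl(-\overline{\eta_1},\ldots,-\overline{\eta_k}\bigr)$ up to the same permutation; matching this with $\widetilde\eta$ is where the choice "$\eta_{j,\tau(j)}$ paired with $-\overline{\eta_{j,\tau(j)}}$, purely imaginary on self-symmetric zones" is used, and it is the only place where a genuine computation (rather than bookkeeping) is needed.
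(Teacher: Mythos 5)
Your strategy — extend $(\otau,\eta)$ to a symmetric complex modulus $(\tau,\widetilde\eta)$, realize it by the unique complex vector field of Theorem~\ref{thm:temps transvers}, and then deduce $\eps=\ov\eps$ from the fact that $z\mapsto\ov z$ sends $X_\eps$ to $X_{\ov\eps}$ while fixing the symmetric modulus — is a legitimate and genuinely different route from the paper's for the strata with $\ell=0$. The paper instead builds the rectifying Riemann surface by hand from the strips determined by $(\otau,\eta)$, pushes forward $1\,\partial/\partial Z$, and proves reality by constructing an explicit antiholomorphic involution $\Sigma$ on the surface; your argument replaces that construction by the uniqueness clause of the DES classification, which is cleaner where it applies. (The transformation rule $\eta\mapsto-\ov\eta$ and the relabelling $e_j\leftrightarrow e_{-j}$ that you flag as the delicate point are indeed the right things to check, and they do work out; note however that no time reversal $t\mapsto -t$ is involved: if $\dot z=P_\eps(z)$ then $w=\ov z$ satisfies $\dot w=P_{\ov\eps}(w)$ with the same time.)

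The genuine gap is the case $\ell=1$ ($m=0$, $k$ odd, the real axis a homoclinic loop), which accounts for $C\bigl(\frac{k-1}2\bigr)$ of the strata. There your reduction collapses: a vector field with a homoclinic loop is \emph{not} generic in $\mathcal P_{\Cc,k+1}$, so Theorem~\ref{thm:temps transvers} provides neither existence nor uniqueness for it. Its moduli space is $\Hh^k$ — all transversal times have strictly positive imaginary part — whereas a homoclinic loop corresponds to one transversal time degenerating to the real datum $\eta_\R\in\R^+$, i.e.\ to a boundary point of $\Hh^k$ that the complex theorem does not parametrize. "Inserting the real axis as an extra separatrix pair" therefore does not produce an admissible DES combinatorial part with an analytic part in $\Hh^k$, and there is no uniqueness statement available to run the rigidity argument. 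To close this you would either need a boundary/limiting analysis of the parametrization $\Hh^k\to\Cc^k$ (not sketched, and nontrivial), or the classification of complex vector fields with homoclinic loops from \cite{BD10} — which is exactly what the paper's direct construction uses: the period $\eta_\R$ enters as gluing data ($E_k=E_1-\eta_\R$) in the Riemann surface, and reality again follows from the involution $\Sigma$. As written, your proof establishes the theorem only for the strata with at least one real singular point.
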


\begin{remarks} 
	\begin{enumerate}
	\item The theorem is stated for polynomials with a leading 
		coefficient of $+1$. When $k$ is odd, the polynomials
		$P(z) = z^{k+1} + \eps_{k-1}z^{k-1} + \cdots + \eps_0$
		and $Q(z) = -z^{k+1} - \eps_{k-1}z^{k-1} - \cdots - \eps_0$
		have the same invariants $(\overline\tau,\eta)$, but they
		are not equivalent. Indeed, they have the same orbits, but with
		reversed orientation. We could add a sign to the modulus for
		$k$ odd, but since we are only concerned with polynomials
		with a positive leading coefficient, we omit this addition.
	\item In all cases, the total number of real parameters is $k$ as expected since a generic stratum is an open set of $\R^k$.	\end{enumerate}
\end{remarks}

The proof has two main steps: \begin{itemize} 
\item 1.~the construction of a Riemann
surface $M$ conformally equivalent to a sphere punctured in $k+1$ points,
on which the vector field $1\frac\del{\del Z}$ is
well-defined and with a distinguished point at infinity; the polynomial vector field is then obtained as
the pushforward of $1 \frac\del{\del z}$ on $\Cc$  punctured in 
$k+1$ points and holomorphically extended with singular points 
at the punctures; 
\item 2.~defining a complex conjugation on $M$ to prove that the 
polynomial vector field has real coefficients.
\end{itemize} 
The construction of the Riemann surface is done
in~\cite{DES05} in the DES-generic case, and in \cite{BD10} when
there are homoclonic loops. We will briefly go over the parts
of the construction needed for the proof of our theorem 
so that the paper be self-contained.

\paragraph{Construction of a rectifying Riemann surface.} 

Let $\ell = k - \#\dom(\otau)$ and $m = \#\Fix(\otau) + 1 -\ell$.

We consider $k-\ell$ horizontal strips grouped in
two types. (See Figure~\ref{fig:zone strip DES b} for $\ell=0$ and
Figure~\ref{fig:zone strip non reel b} for $\ell = 1$.) 
\begin{enumerate}
	\item For each $j \in \Fix(\otau)$, set
	$$
		B_{j,\tau(j)} := \bigl\{ z\in \Cc : 
		\textstyle |\Im z| < \frac{\Im\eta_{j,\tau(j)}}2 \bigr\}
	$$
	and $B_{-j,\tau(-j)} := B_{j,\tau(j)}$. We 
	define the points $E_j := (-1)^j \frac{\eta_{j,\tau(j)}}2$
	and $E_{\tau(j)} = -E_j$ on the boundary of $B_{j,\tau(j)}$.
	\item For each $j\in \{1,\ldots,k-\ell\}\setminus \Fix(\otau)$, set
	$$
		B_{j,\tau(j)} = \bigl\{ z\in \Cc : 
		\textstyle 1 < \Im z < \Im\eta_{j,\tau(j)} + 1 \bigr\}
	$$
	and $B_{-j,\tau(-j)} := \overline{B_{j,\tau(j)}}$ (the complex
	conjugate of $B_{j,\tau(j)}$). We define the point $E_j = i$
	for $j$ odd or $E_j = \eta_{j,\tau(j)} + i$ for $j$ even,
	then we define $E_{\tau(j)} = E_j + (-1)^{j+1}\eta_{j,\tau(j)}$.
	Lastly, set $E_{-j} = \overline{E_j}$ and $E_{\tau(-j)} = 
	\overline{E_{\tau(j)}}$. In the case $\ell=1$, we set
  $E_k = E_1 - \eta_{\R}$ in $B_{1,\tau(1)}$ and $E_{-k}
  = \overline{E_k}$ in $B_{-1,\tau(-1)}$.
\end{enumerate}
We then have $m-1+\ell$ symmetric strips and $\frac{k-m+1}2-\ell$ pairs
of symmetric strips, for a total of $k -\ell$ horizontal strips,
and $2k$ marked points on the boundaries of the strips. 
On each strip, the vector field $1\frac\del{\del Z}$ is
well-defined. The first type of strips will correspond
to zones intersecting the real axis and the real line
in the strip will be mapped on an interval of the real axis.
The other type of strips will correspond to pairs of zones symmetric
with respect to the real axis.

%{\color{blue}Voir la line en vert et le commentaire en bleu plus bas. \color{purple} {\scshape Remplacer ce paragraphe par le suivant en rouge. } For $j>0$ and odd (resp.~even), the marked point $E_j$ divides the
%line on the boundary of the strips below (resp.~above) in 
%\st{line segments} {\color{red} half-lines} $S_{j-1}$ (resp.~$S_j$) on the left and $S_j$
%(resp.~$S_{j-1}$) on the right, as in Figure~\ref{fig:zone strip
%DES b} and~\ref{fig:zone strip non reel b}. For $j< 0$, we define $S_j := \overline{S_{-j}}$.
%(The ordering of the $S_j$'s is completely determined by $\tau$,
%but since $\tau(-j) = -\tau(j)$, this is equivalent.)}

Let us consider a zone not containing the real axis as a homoclinic
loop in its boundary. For $j>0$ and odd, the marked point $E_j$
divides the lower boundary of the strip in half-lines $S_{j-1}$ on
the left and $S_j$ on the right, and the marked point $E_{\tau(j)}$
divides the upper boundary of the strip in half-lines $S_{\tau(j)}$
on the left and $S_{\tau(j)-1}$ on the right if $\tau(j)>0$ (resp.
half-lines $S_{\tau(j)+1}$ on the left and $S_{\tau(j)}$ on the
right if $\tau(j)<0$) as in Figures~\ref{fig:zone strip DES b}
and~\ref{fig:zone strip non reel b}. For $j$ even and $\otau(j)=j$,
the marked point $E_j$ divides the upper boundary of the strip in
half-lines $S_{j}$ on the left and $S_{j-1}$ on the right, and the
marked point $E_{\tau(j)}=E_{-j}$ divides the lower boundary of the
strip in half-lines $S_{-j}$ on the left and $S_{-j+1}$ on the right
(Figure~\ref{fig:zone strip DES}).  For $j< 0$, we define $S_j :=
\overline{S_{-j}}$.  (The ordering of the $S_j$'s is completely
determined by $\tau$, but since $\tau(-j) = -\tau(j)$, this is
equivalent.)

We obtain a Riemann surface $M$ by identifying the strips along a
half-line $S_\ell$ on some $B_{j,\tau(j)}$ with a half-line $S_\ell$
on some $B_{n,\tau(n)}$.  Recall from
Propositions~\ref{attachment} and \ref{attachment_m=0} that the
combinatorial invariant partitions $\{\pm1,\ldots,\pm (k-\ell)\}$
in classes $[j]$. Now, if we glue every half
strip along the separatrices $S_{j'}$ on $S_{j'}$ for every
$j'\in[j]$, we obtain a half infinite cylinder conformally
equivalent to an open set $U$ with the topological type of a
punctured disk. 
This allows us to add a point $Z_{[j]}$ at the
unbounded end of the cylinder. Since there are $k+1$ equivalence
classes, there are $k+1$ half cylinders, so that we can add $k+1$
points to $M$ in this way.

Lastly, we identify $\{E_j\}_j$ to a single point $E$ and we define
a neighbourhood $V(E)$ of $E$ by taking a small half disk around
each $E_j$ inside $B_{j,\tau(j)}\cup \del B_{j,\tau(j)}$ and by
gluing them together along segments of separatrices. This is a
$k$-covering of $E$ and  the local  chart $ h\colon
\bigl(V(E),E\bigr) \to (\hat \Cc,\infty)$, given by $\zeta = {C\over
(Z-E)^{1/k}}$ sends $1\frac\del{\del Z}$ to
$\zeta^{k+1}\frac\del{\del \zeta}$, for some $C$ satisfying
$kC^k=-1$. There are $k$ such charts: we choose the one for which
the image of $S_0$ is oriented along the positive real axis and we
call it~$h_E$.

Let $\widetilde M = M \cup \{Z_{[j]}\}_j\cup \{E\}$ be the Riemann
surface thus obtained. It it proved in~\cite{DES05} ($m > 0$) or
in~\cite{BD10} ($m=0$) that $\widetilde M$ 
is homeomorphic to a sphere, and, following from
Riemann's Uniformization Theorem, isomorphic to the Riemann
sphere.

\begin{figure}[htbp]
  \centering
  \centerline{\subfigure[Zones in the $z$ coordinate (for $k=7$). The separatrix
  graph is in gray. The marked points $e_j$ represent the infinity
  in the direction $\exp\!\left(\pi i \frac{2j -1}{2k}\right)$.
  \label{fig:zone strip DES a}]
  {\includegraphics[scale=.9]{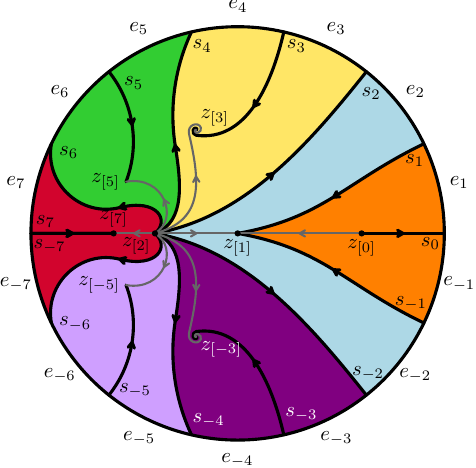}}
  \qquad
  \subfigure[Horizontal strips in the rectifying coordinate
  $t$. The coordinate is ramified at $z_{[j]}
  $ and $z_{[j]}$ corresponds
  to $Z_{[j]}$ ``at infinity''.
  The marked point $e_j$ (infinity in the $z$ coordinate) is mapped
  on $E_j$ on the boundary of a strip. The dashed lines are the images of the
  transversal curves.\label{fig:zone strip DES b}]
    {\includegraphics[scale=.9]{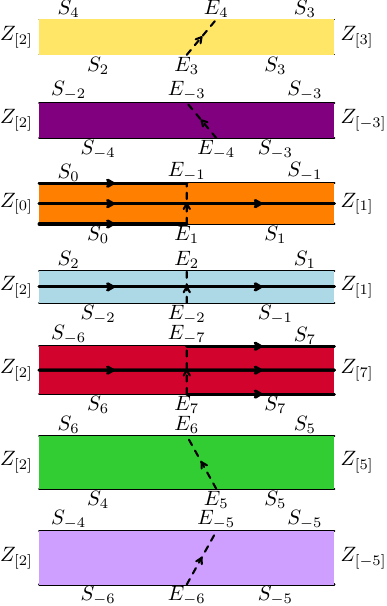}}}
  \caption{Representation of the zones and the strips in codimension~$7$ with
    at least one singular point on the real axis. A zone of a color
    is mapped on the strip of the same color. }
  \label{fig:zone strip DES}
\end{figure}

\begin{figure}[htbp]
  \centering
  \centerline{\subfigure[Strips in the $z$ coordinate.]{\includegraphics{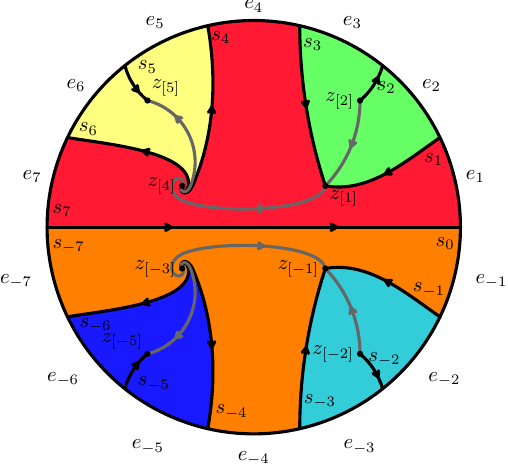}}
  \qquad
  \subfigure[Strips in the rectifying coordinate $Z$.\label{fig:zone strip
		non reel b}]
    {\includegraphics{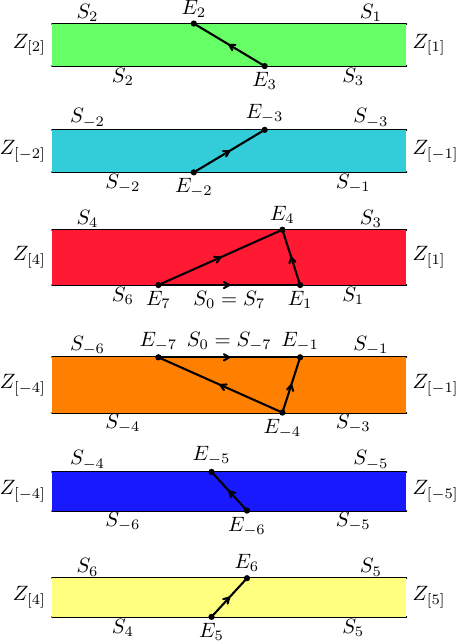}}}
  \caption{Representation of the strips in codimension~$7$ with
    no singular point on the real axis. A zone of a color is 
    mapped on the strip of the same color. The singular points $z_{[j]}$ are
    mapped on the $Z_{[j]}$ ``at infinity''.}
	\label{fig:zone strip non reel}
\end{figure}

\paragraph{Existence of the monic polynomial vector field}

There exists a unique isomorphism $H\colon \widetilde
M \to \hat\Cc$ such that $H$ in the chart
$\bigl(V(E),h_E\bigr)$ has the form $\zeta\mapsto\zeta +
o\bigl({1\over \zeta}\bigr)$. The vector field $1\frac\del{\del Z}$
is well defined on $M$, so we define the pushforward
$P\frac\del{\del z} := H_\ast\bigl(1\frac\del{\del
Z}\bigr)$. Since $P$ is bounded near each $z_{[j]}$, it can be
extended over each $z_{[j]}$ to a finite value, which
we need to prove to be zero. A chart on the half-cylinder
associated to $z_{[j]}$ compatible with the glueing of
points $E_n$ is given by $\xi =\exp(\lambda_j Z)$ where
$\lambda_j$ is the sum of the transversal times of the infinite
half-strips glued to form the half-cylinder. The pushforward in the
$\xi$-coordinate shows that the vector field has a
simple zero at $z_{[j]}$, a property invariant by change of
coordinate. Hence, $P(z_{[j]}) = 0$.  In a neighbourhood of
infinity, we have $(h_E\circ H\inv)_*\bigl(P\frac
\del{\del z}\bigr)(\zeta) = \zeta^{k+1} \frac\del{\del \zeta}$.
It follows that $P$ is a monic polynomial of degree $k+1$.

\begin{remark}\label{rem:coef +1} 
	We choose $h$ so that it maps $S_0$ to $s_0$ tangent to
  $\R_+$ at infinity. With this choice, it then follows that
  the polynomial obtained has a positive leading coefficient
  because of the way we attached the separatrices on the
  Riemann surface. 
\end{remark}

\paragraph{Proof of Theorem~\ref{theo:realize}}

  We define a complex conjugation $\Sigma$ of $\widetilde M$. In the strips, we
  set $\Sigma_{j,\tau(j)}\colon B_{j,\tau(j)}\to B_{-j,\tau(-j)}$
  defined by $\Sigma_{j,\tau(j)}(Z) = \overline Z$, which is
  well-defined since $\eta_{j,\tau(j)} =
  -\overline{\eta_{-j,\tau(-j)}}$.  These maps extend to the
  separatrices on the boundaries of the strips, and because
  $\overline{S_{-j}} = S_j$ (a consequence of $\tau(-j) =
  -\tau(j)$), this defines a global map $\Sigma$ on $M$.  Moreover,
  it maps a half infinite cylinder of $Z_{[j]}$ on a half infinite
  cylinder of $Z_{[-j]}$, so that we can extend $\Sigma$ over the
  $Z_{[j]}$'s. Lastly, it extends on $\{E_j\}_j$ by
  $\Sigma(E_j) = E_{-j}$ and maps a neighbourhood of $E$ on itself.
  Hence, $\Sigma$ is well-defined on $\widetilde M$. 

  Now, let $\sigma = h\circ \Sigma\circ h\inv$. It is an antiholomorphic
  involution of $\hat \Cc$ that maps $\infty$ on itself and $s_0$
  on itself. Then $\sigma(z) = \overline z + ia$, for $a\in\R$. The
  translation $z\mapsto {ia\over 2}$ kills $a$, so we may as well
  suppose that $a=0$. Then $\sigma = id$ only on $\R$.
  \begin{enumerate}
    \item On a strip such that $B_{-j,\tau(-j)} = \overline{B_{j,\tau(j)}}$,
      $\Sigma = id$ on the real axis. This means that this orbit of
      $1\frac\del{\del Z}$ is mapped by $h$ on an interval of $\R$, and
      the $\alpha$- and $\omega$-limits are real singular points. There are
      $m-1+\ell$ such strips.
    \item On the other strips, there are no invariant orbits under
      $\Sigma$, except possibly separatrices. 
    \begin{itemize}
      \item When $m=0$, then $S_0 = S_k = S_{-k}$ is the only invariant orbit
        under $\Sigma$, on which $\Sigma = id$, so it is mapped on $\R$ by $h$. 
      \item When $m>1$, then $S_0$ and $S_k$ are the only two invariant
        orbits under $\Sigma$. They are both mapped on an half-line of
        $\R$ with the landing points $z_{[0]},z_{[k]}\in\R$
        respectively. (If $m=1$, then $z_{[0]} = z_{[k]}$.)
    \end{itemize}
  \end{enumerate}
  In all cases, $\R$ is invariant under $P\frac\del{\del z}$, hence
  $P$ has real coefficients. With a real translation, it is now
  possible to center $P$.  \qed

%
%	This map
%	extends to separatrices by mapping $Z\in S_j$ on $\overline Z \in
%	S_{-j}$, and so $\Sigma$ is well-defined on $S$. It maps
%	horizontal lines (i.e.~orbits of $1\frac\del{\del z}$) on
%	horizontal lines. For $m>0$, there $m-1$ strips $B_{j\otau(j)}$
%	with $\otau(j) = -j$; for each of these, $\Sigma$ maps
%	$B_{j\otau(j)}$ on itself, so $\R \subset B_{j\otau(j)}$ is
%	invariant under $\Sigma$. The separatrices $S_0$ and $S_7 =
%	S_{-7}$ are also invariant under $\Sigma$. For $m=0$, the only
%	invariant horizontal line (segment) is $S_0 = S_7$ on the boundary
%	of $B_{1\otau(1)}$ and $B_{-1\otau(-1)}$. 

%	\vskip1em
%	Let $\sigma = h\circ \Sigma \circ h^{-1}$. This is an
%	antiholomorphic involution on $\Cc\setminus \{z_1, \ldots,
%	z_{k+1}\}$. Since $\Sigma$ maps an end of $B_{j\otau(j)}$ on an end
%	of $B_{-j\otau(j)}$, it follows that $\sigma$ is bounded near each
%	$z_j$. We can extend $\sigma$ to $\Cc$ antiholomorphically. The
%	map $z\mapsto \sigma(\overline z)$ is a holomorphic involution
%	of $\Cc \to \Cc$, so it must be linear, say $\sigma(z) = a\overline z$
%	for some $a\in\Cc$. Since $\Sigma$ maps $S_0$ on $S_0$ and $h$
%	maps $S_0$ on an open interval $I\subseteq\R$, it follows that
%	$\sigma$ maps $I$ on itself, so that $a$ must be real.
%
%	By differentiating $h\circ \Sigma = \sigma\circ h$, we find
%	$h'\bigl(\Sigma(Z)\bigr) = \sigma\bigl( h'(Z)\bigr)$. It
%	follows that $P$ and $\sigma$ commute. We conclude that
%	$P$ has real coefficients.

\def\D{\mathbb D} 
%Given a \kk-involution $\otau$, we can construct a
%Riemann surface as follow. On the closed disk $\overline \D$, we
%identify $2k$ points $e_j = \exp\left( \pi i
%\frac{2j-1}{2k}\right)$, for $j=\pm 1,\ldots,\pm k$.  On the short
%arc on $\del\D$ between $e_j$ and $e_{j+1}$, we mark a point $s_j$.
%We draw an arc between $e_j$ and $e_{\otau(j)}$ inside $\D$ (a
%geodesic of the Poincaré disk, for instance).  This divides the disk
%nto $k-1$ connected and simply connected components. 

\clearpage

\end{document}